\documentclass{amsart}
\usepackage{amssymb,amsmath,amsthm,amsfonts,amscd,amstext}
\usepackage[mathscr]{eucal}
\usepackage{array}
\usepackage{graphicx}
\usepackage{mathtools}
\usepackage{geometry}
\geometry{left=25mm,right=25mm,top=30mm,bottom=30mm}
\usepackage{cite}
\newtheorem{lem}{Lemma}[section]
\newtheorem{prop}[lem]{Proposition}
\newtheorem{thm}[lem]{Theorem}
\newtheorem{cor}[lem]{Corollary}
\newtheorem{conj}[lem]{Conjecture}
\newtheorem{df}{Definition}[section]
\newtheorem{rem}[lem]{Remark}

\newtheorem*{thm*}{Theorem}
\newtheorem*{prop*}{Proposition}

\usepackage{mathrsfs}

\newcommand{\Z}{\mathbb Z}
\newcommand{\C}{\mathbb C}

\begin{document}
\title{Spectral analysis of  transition operators, 
Automata groups \\
and translation in BBS}
\author[T.~Kato]{Tsuyoshi Kato}
\address{Department of Mathematics, Graduate School of Science, Kyoto
University, Sakyo-ku, Kyoto 606--8502, Japan}
\email{tkato@math.kyoto-u.ac.jp}
\author[S.~Tsujimoto]{Satoshi Tsujimoto}
\address{Department of Applied Mathematics and Physics, Graduate School
of Informatics, Kyoto University, Sakyo-ku, Kyoto 606--8501, Japan}
\email{tujimoto@i.kyoto-u.ac.jp}
\author[A.~Zuk]{Andrzej Zuk}
\address{Institut de Mathematiques, Universite Paris 7, 13 rue Albert Einstein, 75013 Paris, France}
\email{andrzej.zuk@imj-prg.fr}
\begin{abstract}
We give  the automata which describe  time evolution rules of the box-ball system (BBS)
with a carrier. It can be shown by use of tropical geometry, 
such systems are ultradiscrete analogues of KdV equation.
 We discuss their relation with the lamplighter group generated by an automaton.
We present  spectral analysis of the stochastic matrices induced by these automata, and verify their spectral coincidence.

\end{abstract}
\subjclass[2000]{60G50 (primary), 20M35, 35Q53 (secondary)}

\keywords{Automata groups, integrable systems, scaling limit, piece-wise linear system, PDE, tropical geometry}
\maketitle


\section{Introduction}
From the view point of dynamical systems, {\em automata} constitute 
semi-group actions on trees which play the essential roles in 
 two different subjects, where one is theory of {\em automata groups} 
 and the other is {\em discrete integrable systems}.
 
 Both subjects have been developed from the point of view of dynamical scale transform
 called {\em tropical geometry}\cite{litvinov_maslov,mikhalkin,viro} or {\em ultradiscretization}\cite{tokitaka}
 (they are essentially the same but the original sources have been different,
 where the former arose in real algebraic geometry and the latter from 
 discretization of integrable systems).
It provides with a correspondence between automata and real rational dynamics, which 
by taking scaling limits of parameters,
 allows us to study two dynamical systems at the same time, 
whose dynamical natures are very 
different from each other.
Particularly it  eliminates detailed activities in rational dynamics  and extracts framework of their structure
in automata, which 
allows us to induce some uniform analytic estimates\cite{katotsuji}.

From the computational interests, many of the integrable systems have been 
discretized. In particular KdV equation 
\begin{eqnarray}
 \frac{\partial u}{\partial {t}}+ 6u\frac{\partial u}{\partial {x}}+ \frac{\partial^3 u}{\partial {x}^3}= 0
\end{eqnarray}
is a fundamental equation in the integrable systems,
and its discretized equation has been extensively studied \cite{hirota,hirotsuji}, 
as a rational dynamical systems. In \cite{tokitaka,HT:ukdv}, tropical transform has been applied
 to the discrete KdV equation
\begin{eqnarray}
  \label{eq:dlKdV}
  \dfrac{1}{u_{n+1}^{(t+1)}} - \dfrac{1}{u_n^{(t)}}
 = \frac{\delta}{1+\delta}\left(u_{n+1}^{(t)} - u_{n}^{(t+1)}\right),
\end{eqnarray}
and the ultradiscrete KdV equation 
\begin{eqnarray}
B^{(t+1)}_n = \min(1-B^{(t)}_n,\sum_{j=-\infty}^{n-1}(B^{(t)}_{j} - B^{(t+1)}_{j})) \label{s10}
\end{eqnarray}
is obtained, which is the so-called {\it box-ball systems} (BBS)\cite{takahashi1990}.
We verify that BBS is described by automata diagram:
\begin{center}
 \includegraphics[height=1.5cm]{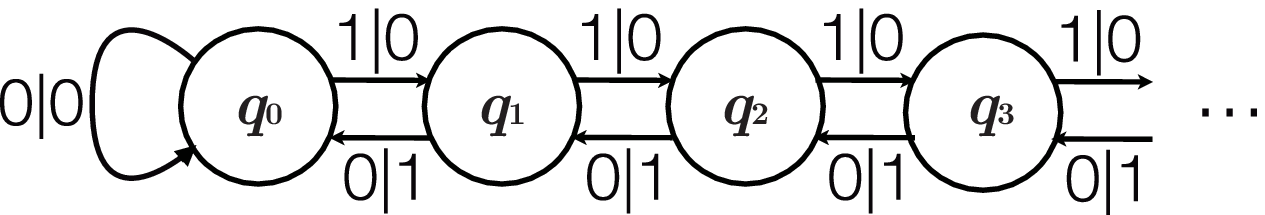}
\end{center}
which is given by a direct limit of the Mealy type automata BBS$_k$ for $k \geq 1$, that are the carrier capacity extensions of BBS (see Section \ref{sec:bbsc}).
Moreover each BBS$_k$  is described by automata diagram (Lemma \ref{lem:3.1}).

Rational dynamics can be regarded as approximations of the corresponding evolutional systems
in partial differential equations\cite{kato3}.
From the view point of dynamical scaling limits,
automata can be regarded as frame-dynamics which play the roles of underlying mechanics 
for PDE\cite{kato2}.
From dynamical view point, distribution of orbits can be measured by probability approach, which  
 is a quite fundamental method. So far there has been little done on study of BBS from probability aspect.
On the other hand study of random walk on automata group has been extensively developed.
So our basic and general question is, whether the frame dynamics of integrable systems
share their structural similarity with geometric properties of automata  groups.
It would give us much deeper understanding of dynamical structure of BBS.

In \cite{kato1}, we have verified that 
the automaton is recursive if and only if the associated rational dynamics
is quasi-recursive.
Quasi-recursiveness represents `almost' recursive which differs from periodic 
 within uniform estimates
 independent of the choice of initial values. 
As an extension of the above property, 
we have   applied tropical geometry to
theory of automata groups  
 to analyze global behaviour of real rational dynamical systems.
A discrete group is called an {\em automata group}, if it is realized by  actions on the rooted trees,
which are represented by a Mealy automaton. 
The automata group is a quite important class in group theory,
which 
 have given answers to many important questions.
Of particular interests for us are, counter-example to the  Milnor's conjecture,
 solution to the Burnside problem on
the existence of finitely generated infinite torsion groups, 
non-uniform exponential growth groups, etc.
These applications are described in \cite{zuk1} and \cite{zuk2}.
As an application  of tropical geometry to the construction of the Burnside group,
we have verified that there 
exists a rational dynamical systems of Mealy type 
which satisfy infinite quasi-recursiveness
 \cite{kato4}.
 This  property again allows error from recursiveness   
which corresponds to infinite torsion, while rationality corresponds to finite generation.

In case of finitely generated 
groups one can consider as the space $l^2$ functions on groups and as the operator 
the sum of translations by chosen generators and their inverses. The study of spectra
of such operators was initiated by Kesten and the normalized operators are called 
random walk or {\em transition operators} \cite{woess}.

In general it is a very difficult problem to compute spectra of these operators. 
Some important progresses have been achieved by studying
different approximations of such operators using the representations of the group, in particular their actions on finite sets. For instance in \cite{BVZ} the spectrum of the random walk operator on the Heisenberg group was computed using approximations Harper operators via theory of
rotational algebras. In case of groups generated by automata one can study their 
actions on finite sequences. The simplest case when one obtains
an interesting spectral information is the automaton on two states which generated the lamplighter group. All other two state automata lead to very elementary cases.
In the case of BBS we do not deal with invertible transformations which would define groups. However we can still define the operators similar to random walk operators and consider their
action on finite sequences. This enables us to compute the limit spectral measures for such sequences as was done for automata in \cite{grizuk}.

Even though both BBS and automata group are constructed from Mealy automata,
their scopes are quite different. As a result, one finds quite different characteristics
from each other. It would be of particular  interest for us to combine such properties via
dynamical study of Mealy automata.
We want to investigate BBS systems  via spectra of some operators associated to them, as
it is common in non-commutative geometry.

Recall that both the lamplighter group and BBS act on the rooted binary tree $T$.
For convenience let us describe its action in the case of the lamplighter group (see Section \ref{sec:lamp}).
Each state acts on the binary tree, and in this case there are two actions ${\bf A}_{q_0}$ 
and ${\bf A}_{q_1}$
corresponding to the state set $\{q_0,q_1\}$. It is defined inductively on each level set.
The actions on the second level set are depicted as follows:
\begin{center}
  \includegraphics[height=4cm]{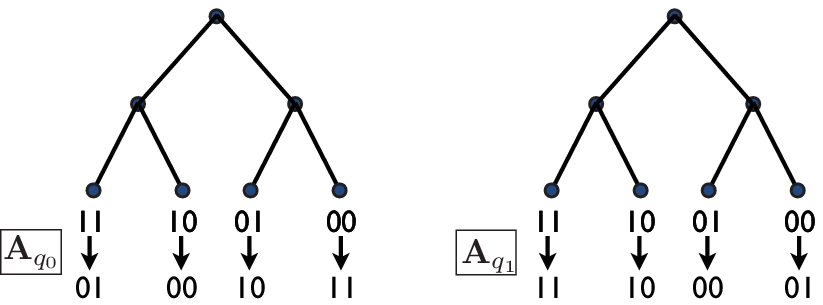}
\end{center}

In general  transition operators of automata  can be given by filtration of finite rank matrices
with respect to the restriction of the action on each level set.
It was known that these matrices of the lamplighter automaton all satisfy stochastic property.
Such property is quite important for automata groups in relation to random walk on automata groups.
In comparison to BBS automata, 
we have verified that the  same  property holds  for all $k \geq 1$.

Let $\{M_k^{(n)}\}_{n \geq 1} $ be the filtration of the transition operators for 
BBS$_k$. They are defined in Section \ref{sec:bbs_translation}
by $\frac{1}{2k+2}\sum_{i=0}^{k}(a_i^{\!(n)} + a_i^{\!(n)*})$,
where $a_i^{(n)}$ are the restrictions of the representation matrices on the level $n$ set 
and the superscript $*$ denotes the transpose of the matrix.

\begin{prop*}[Proposition \ref{prop61}]
The matrix $M_k^{(n)}$ is double stochastic for all $k \geq 1$, $n \geq 0$, i.e. 
the sum of each row and each  column is equal to $1$.
\end{prop*}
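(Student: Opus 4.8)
The plan is to use the manifest transpose-symmetry of $M_k^{(n)}$ to cut the statement down to a single family of row sums, and then to obtain those from the deterministic, yet non-invertible, nature of the automaton BBS$_k$. First note that $M_k^{(n)}$ is a symmetric matrix: each summand satisfies $(a_i^{(n)}+a_i^{(n)*})^* = a_i^{(n)*}+a_i^{(n)} = a_i^{(n)}+a_i^{(n)*}$, so $M_k^{(n)*}=M_k^{(n)}$. Hence the column sums of $M_k^{(n)}$ equal its row sums, and it suffices to show that every row of $M_k^{(n)}$ sums to $1$; nonnegativity of the entries is clear since the $a_i^{(n)}$ are $0$--$1$ matrices.

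Next I would compute the relevant sums. Because BBS$_k$ is a deterministic Mealy automaton, each state $q_i$ sends a word of length $n$ to a unique word of length $n$; writing $a_i^{(n)}$ in the basis of length-$n$ words (inputs indexing rows, outputs indexing columns) this says that every row of $a_i^{(n)}$ has a single nonzero entry equal to $1$, so $a_i^{(n)}$ is row-stochastic. Consequently the row sum of $\sum_{i=0}^k a_i^{(n)}$ is $k+1$. Writing $c_i(y)$ for the $y$-th column sum of $a_i^{(n)}$, i.e. the number of length-$n$ words mapped to $y$ by $q_i$, the row sum of $a_i^{(n)*}$ at index $y$ is exactly $c_i(y)$, so the $y$-th row sum of $M_k^{(n)}$ equals $\frac{1}{2k+2}\bigl((k+1)+\sum_{i=0}^k c_i(y)\bigr)$. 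The whole claim thus reduces to the balance identity
\begin{equation*}
\sum_{i=0}^k c_i(y)=k+1 \qquad \text{for every length-}n\text{ word }y,
\end{equation*}
i.e. every output word has exactly $k+1$ preimages when counted over all $k+1$ carrier states.

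I would prove this identity by induction on $n$ using the recursive block form of the action. Ordering length-$n$ words as (first letter $c$, remaining word $w'$), the equality $q_i(cw')=\lambda(q_i,c)\,q_{\delta(q_i,c)}(w')$ gives $c_i(b'w'')=\sum_{c:\,\lambda(q_i,c)=b'} c_{\delta(q_i,c)}^{\,(n-1)}(w'')$. Summing over $i$ and grouping the pairs $(q_i,c)$ by their next state $q_j$ yields $\sum_i c_i(b'w'')=\sum_j N(b',j)\,c_j^{(n-1)}(w'')$, where $N(b',j)=\#\{(q_i,c):\lambda(q_i,c)=b',\ \delta(q_i,c)=q_j\}$. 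By the inductive hypothesis $\sum_j c_j^{(n-1)}(w'')=k+1$ for every $w''$, so the induction closes as soon as $N(b',j)=1$ for all output letters $b'$ and all states $q_j$; equivalently, the map $(q_i,c)\mapsto(\lambda(q_i,c),\delta(q_i,c))$ is a bijection $\{q_0,\dots,q_k\}\times\{0,1\}\to\{0,1\}\times\{q_0,\dots,q_k\}$. The base case $n=0$ is the trivial $1\times1$ identity.

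The main obstacle is precisely this bijection, which is where the carrier dynamics enter and where determinism alone is insufficient: an arbitrary non-invertible automaton need not be column-balanced. I would verify it as a finite check directly from the transition table of BBS$_k$ in Lemma \ref{lem:3.1}, reading the state $q_i$ as the number $i$ of balls in transit. The point is that the two output letters are tied to the two moves of the carrier --- picking up a ball outputs $0$ and raises the load, dropping one outputs $1$ and lowers it, with the empty and saturated cases closing the remaining slots --- so although each individual $q_i$ is non-invertible, the next state records exactly which $(q_i,c)$ produced a given $(b',q_j)$. Granting the bijection, each row sum of $M_k^{(n)}$ becomes $\tfrac{1}{2k+2}\bigl((k+1)+(k+1)\bigr)=1$, which together with symmetry and nonnegativity establishes that $M_k^{(n)}$ is doubly stochastic for all $k\ge1$ and $n\ge0$.
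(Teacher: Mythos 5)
Your proposal is correct and takes essentially the same route as the paper: reduce to a single family of sums via symmetry of $M_k^{(n)}$, dispose of the direct part $\sum_i a_i^{(n)}$ by determinism, and prove the transposed (preimage-counting) part by induction on the level. Your key bijection $(q_i,c)\mapsto(\lambda(q_i,c),\delta(q_i,c))$ is exactly the combinatorial content of the paper's block computation, which shows that each block-column of $\sum_i a_i^{(n+1)*}$ recombines to the full previous-level sum $\sum_i a_i^{(n)*}$.
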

It is known that there is an example of Mealy automata whose
transition operators do not satisfy stochastic property.

The simplest case BBS $k=1$ 
(BBS translation)
satisfies 
rather trivial  behavior from dynamics view points, since it is translation.
However 
the above proposition  would suggest that BBS translation is closely related to the cases for $k \geq 2$.
Concerning BBS translation, 
we have discovered  non trivial phenomena from spectral analysis view point.
\begin{thm*}[Theorem \ref{thm71}]
{\normalfont (i)} The spectra of the transition operators coincide with each other
between the lamplighter group as an  automata group 
and the BBS translation.
It is totally discrete and dense in $[-1,1]$.

\end{thm*}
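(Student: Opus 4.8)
The plan is to reduce the statement to a level-by-level comparison of the finite-rank approximations and then to import the explicit diagonalization of the lamplighter operator. For $k=1$ the BBS transition operator restricted to the level-$n$ set is
\[
M_1^{(n)} = \frac{1}{4}\left(a_0^{(n)} + a_0^{(n)*} + a_1^{(n)} + a_1^{(n)*}\right),
\]
while the lamplighter transition operator on the same level set has the parallel form
\[
L^{(n)} = \frac{1}{4}\left(\mathbf{A}_{q_0} + \mathbf{A}_{q_0}^{*} + \mathbf{A}_{q_1} + \mathbf{A}_{q_1}^{*}\right).
\]
Both are normalized symmetric sums of two tree actions, and by Proposition \ref{prop61} both are doubly stochastic, so their spectra already lie in $[-1,1]$. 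First I would establish the cleanest possible comparison at each finite level: that $M_1^{(n)}$ and $L^{(n)}$ have the same characteristic polynomial, hence the same spectrum. The most direct route would be an explicit permutation $P_n$ of the $2^n$ vertices of level $n$ conjugating one into the other, but since the two automata are genuinely different one should be prepared to settle instead for equality of characteristic polynomials via a shared recursion.

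The heart of the argument, and the main obstacle, is exactly this finite-level coincidence. Although both the BBS translation and the lamplighter automaton act on the same rooted binary tree $T$, the lamplighter states are invertible while the BBS carrier map is not, so no literal identification of automata is available. I would proceed by induction on $n$, decomposing each operator along the first-level splitting of $T$ into the $2\times 2$ block form dictated by the wreath recursion of the respective automaton, reading off the one-step transitions of BBS$_1$ from the automaton diagram of Lemma \ref{lem:3.1} and comparing them with the lamplighter action on the second level set. The symmetrization $a_i^{(n)} + a_i^{(n)*}$ is precisely what absorbs the non-invertibility of the carrier map and reproduces the symmetric lamplighter generators; the inductive step then amounts to checking that the two families satisfy the same block recursion for $\det(M_1^{(n)} - \lambda)$ and $\det(L^{(n)} - \lambda)$. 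Verifying that these recursions coincide is where the bulk of the computation lies.

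Once coincidence at every finite level is in hand, the associated Kesten density-of-states measures, obtained as weak limits of the normalized eigenvalue-counting measures of $M_1^{(n)}$ and of $L^{(n)}$, are identical. It then remains only to describe this common measure, and here I would invoke the explicit diagonalization of the lamplighter operator carried out in \cite{grizuk}: its eigenvalues are the numbers $\cos(\pi p/q)$ with $p/q\in\mathbb{Q}$, each an honest eigenvalue of some finite-level approximation, and the limiting measure is purely atomic. This yields both assertions simultaneously. The spectrum is totally discrete because the spectral measure has no continuous part, and it is dense in $[-1,1]$ because the set $\{\cos(\pi p/q) : p/q\in\mathbb{Q}\}$ is dense in the interval. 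The coincidence of spectra between the lamplighter group and the BBS translation is then immediate from the finite-level identification of the previous step.
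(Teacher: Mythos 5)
Your overall strategy --- compare the finite-level operators through a determinant recursion coming from the $2\times 2$ block (wreath) structure, then import the explicit lamplighter spectrum from \cite{grizuk} --- is the same as the paper's. But there is a genuine gap at precisely the point you yourself flag as ``where the bulk of the computation lies'': the induction you describe, on the one-variable characteristic polynomials $\det(M_1^{(n)}-\lambda)$ and $\det(L^{(n)}-\lambda)$, does not close. Expanding $\det(M_1^{(n+1)}-\lambda)$ via the block recursion produces level-$n$ operator combinations (cross terms built from $a_0^{(n)}, a_1^{(n)}$ and their adjoints) that are not functions of $M_1^{(n)}$ and $\lambda$ alone, so the level-$(n+1)$ characteristic polynomial cannot be expressed through the level-$n$ one. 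The paper's proof (following \cite{grizuk}) resolves this by enlarging to a \emph{two-parameter} pencil
\begin{align*}
\Phi_n(\lambda,\mu) = \det\Bigl(a_n + a_n^* + b_n + b_n^* - \tfrac{1}{2}\,\mu\,\bigl(a_n b_n^* + b_n a_n^*\bigr) - \lambda\,\mathrm{Id}_{2^n}\Bigr),
\qquad a_n = a_0^{(n)},\ b_n = a_1^{(n)},
\end{align*}
and by proving the operator identity $a_n a_n^* + b_n b_n^* = 2\,\mathrm{Id}_{2^n}$ (Lemma \ref{lem63}), which is exactly what turns the off-diagonal blocks into scalar perturbations and allows the commuting-block determinant formula $\det\begin{pmatrix} A & B \\ C & D\end{pmatrix} = \det(AD-CB)$ to apply. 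This yields the closed recursion
\begin{align*}
\Phi_{n+1}(\lambda,\mu) = (\mu-\lambda)^{2^n}\,
\Phi_n\Bigl(\tfrac{2-\lambda^2+\mu^2}{\mu-\lambda},\ \tfrac{-2}{\mu-\lambda}\Bigr),
\end{align*}
which is identical to the recursion obtained in \cite{grizuk} for the lamplighter generators; since the initial data also agree, the spectra coincide at every level and the explicit eigenvalues $\cos(p\pi/q)$, their multiplicities, and hence discreteness and density in $[-1,1]$, all follow. The auxiliary spectral parameter $\mu$ together with Lemma \ref{lem63} is the one idea your proposal is missing, and without it the inductive step fails.

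A further remark on your ``most direct route'': an explicit permutation conjugating $M_B^{(n)}$ into $M_L^{(n)}$ does exist --- it is the paper's Theorem \ref{thm:BBS to LL}, with $\sigma_n$ built recursively from the Sierpinski gasket pattern --- but it is proved separately, is considerably more involved, and is not used to prove Theorem \ref{thm71}. Note also Proposition \ref{lem8.5}: no tree automorphism can realize this conjugation, so a naive level-by-level matching of the two wreath decompositions (which is what a tree automorphism would give) is provably impossible; any correct argument must either break the tree structure, as the permutation $\sigma_n$ does, or avoid conjugation altogether, as the two-variable determinant recursion does.
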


Because
 the eigenvalue distributions coincide, we may expect that 
 both transition operators are mutually conjugate by some orthogonal matrices.
 Actually we verify that it certainly holds. Moreover
it would be natural to ask whether the conjugation might be chosen from tree automorphisms.

We have the negative answer:
\begin{prop*}[Proposition \ref{lem8.5}]
There are  no automorphisms of $T$ which conjugate 
between $M_L^{(n)}$ and $M_B^{(n)}$.
\end{prop*}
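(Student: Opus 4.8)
The plan is to reduce tree-automorphism conjugacy to ordinary permutation conjugacy and then to separate the two operators by a purely combinatorial invariant that their coinciding spectra cannot detect. An automorphism $\phi$ of the rooted binary tree $T$ restricts on the level-$n$ set to a permutation $\sigma=\sigma_\phi$ of the $2^n$ leaves lying in $\mathrm{Aut}(T_n)\cong \Z/2\wr\cdots\wr\Z/2$ ($n$ factors), and its permutation matrix $P_\sigma$ acts by $(P_\sigma M P_\sigma^{-1})_{ij}=M_{\sigma^{-1}(i)\,\sigma^{-1}(j)}$, i.e.\ by simultaneously relabelling rows and columns. Thus a tree-automorphism conjugacy $P_\sigma M_L^{(n)}P_\sigma^{-1}=M_B^{(n)}$ is in particular a conjugacy by a permutation matrix, and it suffices to produce an invariant of symmetric matrices under permutation-conjugation on which $M_L^{(n)}$ and $M_B^{(n)}$ disagree: this rules out every tree automorphism a fortiori.

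By Theorem~\ref{thm71} the two operators share the same spectrum, so all spectral quantities (characteristic polynomial, traces of powers, determinant) agree and are useless here. I would instead use the weighted support graph $G(M)$ on the leaf set, carrying an edge or loop $\{i,j\}$ whenever $M_{ij}\neq 0$; permutation-conjugation sends $G(M)$ to an isomorphic graph, so its isomorphism type, and in particular its degree sequence together with its numbers of edges and loops, is invariant. The structural asymmetry to exploit is that the lamplighter generators act invertibly, so each summand of $M_L^{(n)}$ is a permutation matrix and the support graph is (near-)regular, whereas the BBS generators $a_0^{(n)},a_1^{(n)}$ are non-invertible Mealy maps (one $1$ per row but columns of unequal weight), so the symmetrized operator $M_B^{(n)}=\tfrac14\sum_{i=0}^{1}(a_i^{(n)}+a_i^{(n)*})$ has genuinely irregular column sums. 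Reading off the recursive descriptions of both actions from the earlier sections, I would compute these degree (equivalently, nonzero-count) profiles and exhibit a level $n$ at which the two degree multisets differ.

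The main obstacle is that the coarse invariants can coincide at small $n$: already at $n=1$ both matrices are forced into the doubly stochastic symmetric form $\left(\begin{smallmatrix}1-b&b\\ b&1-b\end{smallmatrix}\right)$ with the same $b$, hence are literally equal and every invariant matches. The distinction must therefore be produced at higher levels, and if the symmetrization $a_i^{(n)}+a_i^{(n)*}$ should wash out the in/out-degree asymmetry one must pass to a genuinely tree-adapted invariant. For this I would use the recursive $2\times2$ block decomposition of each operator along the top split of $T$: a tree automorphism either fixes or swaps the two depth-one subtrees, so it must match the unordered pair of off-diagonal crossing blocks and, recursively, conjugate the diagonal blocks by level-$(n-1)$ automorphisms. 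Comparing the support sizes of the crossing blocks — an invariant under both the swap and the recursive action — should yield the required mismatch for all $n\ge 2$, and establishing this block-level inequality from the explicit recursion for the lamplighter and for BBS$_1$ is the delicate step.
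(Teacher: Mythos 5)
Your primary strategy cannot succeed, and your fallback stops exactly at the decisive step. The paper's Theorem~\ref{thm:BBS to LL} (announced already in the introduction) asserts that $M_B^{(n)}$ and $M_L^{(n)}$ \emph{are} conjugate by a permutation matrix $\sigma_n$. Hence every invariant of symmetric matrices under permutation conjugation --- the isomorphism type of the weighted support graph, its degree sequence, edge and loop counts, the multiset of nonzero-count profiles --- takes identical values on the two operators, so no such invariant can ever rule out tree automorphisms ``a fortiori''. Moreover, the structural asymmetry you hope to exploit is absent: by Proposition~\ref{prop61}, $M_B^{(n)}$ is doubly stochastic, so all its row and column sums equal $1$, exactly as for $M_L^{(n)}$. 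The whole point of Proposition~\ref{lem8.5} is that the obstruction is specific to the tree structure and invisible to relabelling invariants.

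Your fallback --- the recursive $2\times 2$ block decomposition, noting that a tree automorphism fixes or swaps the depth-one subtrees and therefore sends the crossing block $B$ to $P_0BP_1^*$ or to $P_0B^*P_1^*$ --- is the right kind of invariant, and it does work; but you defer precisely the inequality that would finish the argument, and you set yourself the needlessly hard task of proving it for all $n\ge 2$. The missing observation (the first line of the paper's proof) is that a tree automorphism conjugating the operators at level $n$ restricts to one conjugating them at every lower level, since the operators are compatible with the level projections; so a single mismatch at $n=2$ suffices. There, writing the unnormalized operators, the crossing block of $M_B^{(2)}$ is $\left(\begin{smallmatrix}1&0\\2&1\end{smallmatrix}\right)$ with $3$ nonzero entries, while that of $M_L^{(2)}$ is $\left(\begin{smallmatrix}1&1\\1&1\end{smallmatrix}\right)$ with $4$, which would complete a proof along your lines. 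The paper uses an even simpler level-$2$ invariant: the diagonals are $(2,0,0,2)$ for BBS and $(0,0,2,2)$ for the lamplighter, and level-$2$ tree automorphisms (swaps within each pair and the swap of the two pairs) can carry the latter only to $(0,0,2,2)$ or $(2,2,0,0)$, never to $(2,0,0,2)$. As submitted --- first half refuted by the paper's own results, second half an unexecuted plan --- the proposal does not constitute a proof.
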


On the other hand, one might still ask whether it comes from permutations, 
or 
from an automorphism of the one sided shift.
We have the affirmative answer, which gives the complete answer 
to
the conjugations.

Let $M_B^{(n)} , M_L^{(n)}  \in \mbox{Mat}(2^n \times 2^n; {\mathbb R})$.
Let us denote the set of indices as $I_n=\{0,1,\ldots,2^n-1\}$.
\begin{thm*}[Theorem \ref{thm:BBS to LL}]
 There exists a permutation   matrix $\sigma_n$, such that
\begin{align*}
 \sigma_n^* M_B^{(n)} \sigma_n =M_L^{(n)}
\end{align*}
holds.
We have the explicit recurrence formulas for $\sigma_n$ which involves the  Sierpinski gasket pattern.
\end{thm*}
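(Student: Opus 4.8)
The plan is to construct $\sigma_n$ explicitly by a recursion on the level $n$ and to verify the intertwining relation $\sigma_n^* M_B^{(n)} \sigma_n = M_L^{(n)}$ by induction on $n$. The starting point is that both families are assembled from Mealy automata acting on the rooted binary tree, so each of $M_B^{(n)}$ and $M_L^{(n)}$ admits a self-similar decomposition into a $2\times 2$ array of $2^{n-1}\times 2^{n-1}$ blocks, the block pattern being dictated by the transition and output data of the respective automaton. First I would write these two block recursions side by side and record precisely how their diagonal blocks reproduce the level $n-1$ operator while the off-diagonal blocks encode the coupling between the two maximal subtrees hanging from the root.

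I would identify the index set $I_n=\{0,\dots,2^n-1\}$ with $\mathbb{F}_2^n$, so that a permutation matrix $\sigma_n$ is the same datum as a bijection $f_n$ of $\mathbb{F}_2^n$. A tree automorphism corresponds to an $f_n$ respecting the prefix (subtree) structure, that is, to a block-diagonal $\sigma_n$; by Proposition \ref{lem8.5} no such $\sigma_n$ can succeed, so the recursion for $\sigma_n$ must contain a genuinely off-diagonal, half-mixing piece. I would therefore posit a recursion of the schematic form $\sigma_n=\left(\begin{smallmatrix}\sigma_{n-1} & 0\\ \ast & \sigma_{n-1}\end{smallmatrix}\right)$, in which the lower-left block is a prescribed permutation sliding the first half into the second; iterating this recursion makes the accumulated coupling blocks reproduce the entries $\binom{i}{j}\bmod 2$, i.e.\ the Sierpinski gasket, via Lucas' theorem. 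Equivalently, $f_n$ is expected to be the Pascal (finite-difference / \textsc{xor}) transform on $\mathbb{F}_2^n$, an $\mathbb{F}_2$-linear bijection whose defining triangular array is exactly the Sierpinski pattern and which commutes with the one-sided shift in the sense discussed above.

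With the candidate in hand the verification is an induction. Assuming $\sigma_{n-1}^* M_B^{(n-1)}\sigma_{n-1}=M_L^{(n-1)}$, I would substitute the two block recursions together with the block form of $\sigma_n$ into $\sigma_n^* M_B^{(n)}\sigma_n$, expand the $2\times 2$ block product, and match entry by entry against the block form of $M_L^{(n)}$. The diagonal blocks collapse to the inductive hypothesis, while the two off-diagonal blocks reduce to identities relating the coupling permutation to the automaton coupling data. The base case $n=1$ (and, if convenient, $n=0$) is checked by direct inspection of the two small matrices.

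The main obstacle is the second step: pinning down the precise coupling permutation so that simultaneously $\sigma_n$ is an honest permutation matrix at every level, the off-diagonal intertwining identities hold exactly, and the recursion is self-consistent, i.e.\ the coupling forced at level $n$ is compatible with the one already imposed at level $n-1$. This is exactly the point where the absence of a tree-automorphism conjugacy (Proposition \ref{lem8.5}) compels a nontrivial mixing of the two halves, and where the Sierpinski self-similarity is the bookkeeping device that makes the cross-terms cancel. Once the correct coupling is identified, the surviving computations in the inductive step are routine block-matrix algebra, and the spectral coincidence of Theorem \ref{thm71} serves as a consistency check on the final formula.
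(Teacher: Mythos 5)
Your instinct about the combinatorial shape of the conjugator is close to what the paper finds --- digit-wise, $\widehat\sigma_n$ is an affine map of $\mathbb{F}_2^n$ whose coefficients $g_i^{(m)}=\binom{m-1}{i-1} \bmod 2$ form the Sierpinski pattern --- but your verification scheme breaks at its first step. The self-similarity you plan to induct on is not present: writing $M_B^{(n)}$ in $2\times 2$ block form, its diagonal blocks are $\frac{1}{4}\left(a_B^{(0;n-1)}+a_B^{(0;n-1)*}\right)$ and $\frac{1}{4}\left(a_B^{(1;n-1)}+a_B^{(1;n-1)*}\right)$, not $M_B^{(n-1)}$ (and likewise for $M_L^{(n)}$), so ``the diagonal blocks collapse to the inductive hypothesis'' never occurs; the identity $M^{(n)}|_{T_{n-1}}=M^{(n-1)}$ concerns projection to a coarser level, not block entries. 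Any induction would therefore have to carry information about how $\sigma_{n-1}$ relates the generators separately, and there a rank obstruction kills the naive strengthening: each $a_B^{(\varepsilon;n)}$ has half of its rows equal to zero, hence is singular, while each $a_L^{(\varepsilon;n)}$ is a permutation matrix, so no permutation conjugates the generators individually. Only the sums $a_B^{(0;n)}+a_B^{(1;n)}$ and $a_L^{(0;n)}+a_L^{(1;n)}$ are conjugate; that is exactly the stronger identity the paper proves, and it does so not by block induction but entrywise.

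A second concrete problem is your posited recursion $\sigma_n=\left(\begin{smallmatrix}\sigma_{n-1}&0\\ \ast&\sigma_{n-1}\end{smallmatrix}\right)$: this cannot define a permutation matrix unless $\ast=0$, since with $\sigma_{n-1}$ a permutation every column of the left half already contains a $1$, and a nonzero lower-left block creates columns of weight at least $2$ --- so the recursion is incompatible with the very property it must preserve. The paper's recursion lives on the permutation vector instead, $\widehat\sigma_n=(\widehat\sigma_{n-1},\widehat\sigma_{n-1})+2^{n-1}\left(1-\nu_0^{(n)},\nu_0^{(n)},\ldots,1-\nu_{2^{n-1}-1}^{(n)},\nu_{2^{n-1}-1}^{(n)}\right)$: each image is lifted into the top or bottom half according to a Sierpinski-derived vector $\nu^{(n)}$, which distributes the columns of $\sigma_n$ between the two halves and is in no sense block triangular. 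The actual verification then needs three ingredients your plan does not produce: the closed entry formulas $\alpha^{(\varepsilon)}_{j,k}=[(j_1+1+\varepsilon)(j_2+1+k_1)\cdots(j_n+1+k_{n-1})]$ for $a_B^{(\varepsilon;n)}$ and $\beta^{(\varepsilon)}_{j,k}=[(j_1+k_1+\varepsilon)\cdots(j_n+k_n+k_{n-1})]$ for $a_L^{(\varepsilon;n)}$; the digit formula $k_\kappa'=\left[1+\sum_{i} k_{\kappa+i-1}g_i^{(n-\kappa+1)}\right]$ for $\widehat\sigma_n$ (note the affine shift $+1$, absent from your linear Pascal transform) together with the involution $\widehat\sigma_n^2=\mathrm{id}$; and the key cancellation $[j_i'+k_i'+k_{i-1}']=[j_i+k_{i-1}+1]$, proved from the Pascal recursion $g_m^{(n)}=g_{m-1}^{(n-1)}+g_m^{(n-1)}$ and mod-$2$ binomial identities, which drives a two-case entrywise check of $\alpha^{(0)}_{j,k}+\alpha^{(1)}_{j,k}=\beta^{(0)}_{\widehat\sigma_n(j),\widehat\sigma_n(k)}+\beta^{(1)}_{\widehat\sigma_n(j),\widehat\sigma_n(k)}$. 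None of this is recoverable from block-matrix algebra plus the inductive hypothesis, so as written the proposal has a genuine gap.
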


In the cases for $k \ge 2$, we present numerical computations of the spectral distributions and observe that there exist structural similarities in the distributions of eigenvalues between the BBS$_k$  and the lamplighter group. 
Based on the observations, we give conjectures (Conjecture \ref{conj1} and \ref{conj2}).

If one reduces an integrable system to an automaton  by extracting its dynamical framework,
then it should posses high symmetry, which will have some structural similarity with 
finitely generated groups. 
It would be interesting to investigate further coincidence between 
spectra of  automata 
associated to integrable system
and the one  associated to automata groups.

\section{Automata groups}
 An
 {\em automaton} is defined  by finite rules which  can create quite complicated state dynamics 
 over the sequences of alphabets.

Let $Q$ and $S$ be finite sets, and consider the set of all infinite sequences:
$$S^{\mathbb N}=\{(s_0,s_1, \dots): s_i \in S\}.$$

A  {\em Mealy automaton}  ${\bf A}$  is given by  a pair of functions:
$$\varphi: Q \times S \to Q, \quad
\psi: Q \times S \to S$$
which  gives rise to the state dynamics on $S^{\mathbb N}$ as follows.
Let us choose any $q\in Q$ and $\bar{s}=(s_0,s_1, \dots) \in S^{\mathbb N}$.
Then:
$${ \bf A}_q: S^{\mathbb N} \to S^{\mathbb N}$$
 ${\bf A}_q (\bar{s})=(s_0',s_1', \dots)$ is determined 
 inductively by:
  $$s_i'= \psi(q_i,s_i), \quad
 q_{i+1} = \varphi(q_i, s_i) \quad (q_0=q).$$
Besides the dynamics over $S^{\mathbb N}$, 
the change of the state sets play important roles in a hidden dynamics.

 Any sequences
 $\bar{q}^j =(q^0,\dots, q^j) \in Q^{j+1}$
 give dynamics by compositions:
 $${\bf A}_{\bar{q}^j} = {\bf A}_{q^j} \circ \dots \circ {\bf A}_{q^0} : S^{\mathbb N} \to S^{\mathbb N}.$$
It can happen  that  different automata give the same state dynamics.
In such a case, the dynamics of ${\bf A}_q$ are the same, but the systems of change of state sets 
can be very different.
Such two automata are called {\em equivalent}.

Suppose 
$$\psi: (q, \quad) : S \to S$$ are permutations for all $q \in Q$.
If we identify $S^{\mathbb N}$ with the rooted tree, then the Mealy dynamics give the group actions on the tree,
since the actions can be restricted  level-setwisely.
 The group generated by these states is called the {\em automata group}
given by the automaton $(\varphi,\psi)$.

Next we introduce the diagram expression of the automaton ${\bf A}$ defined via the quadruple $(Q,S,\varphi,\psi)$.
In the diagram each vertex corresponds to a state $q \in Q$.
When $\varphi(q,i)=r$ and $ \psi(q,i)=j$, the vertex $q$ is connected to the vertex $r$ with the directional arrow equipped with the pair of input and output strings, $i \mid j$.

\subsection{Lamplighter group}\label{sec:lamp}
The lamplighter group:
$$(\oplus_{\mathbb Z} {\mathbb Z}_2) \rtimes {\mathbb Z}$$
 is generated by canonical generators, which are 
$v$,  one copy of ${\mathbb Z}_2$,
and $u$,   the generator of ${\mathbb Z}$.

The corresponding automaton  can be represented by  the following 
diagram:
\begin{center}
 \includegraphics[height=1.5cm]{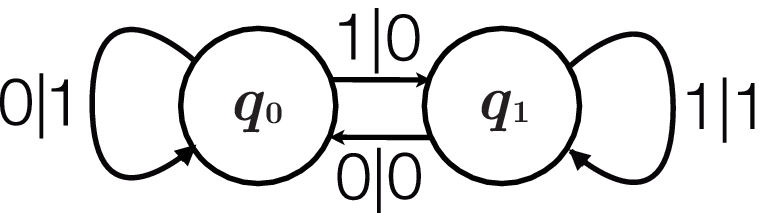}
\end{center}
which shows that the quadruple $(Q,S,\varphi,\psi)$ of the lamplighter group as an automata group is given by
\begin{eqnarray*}
&& Q=\{q_0,q_1\}, \quad S=\{0,1\}, \\
&& \varphi(q_0,0)=0,\quad \varphi(q_0,1)=1, \quad \varphi(q_1,0)=0, \quad \varphi(q_1,1)=1,\\
&& \psi(q_0,0)=1,\quad \psi(q_0,1)=0, \quad \psi(q_1,0)=0, \quad \psi(q_1,1)=1.
\end{eqnarray*}

For example, we give actions of the lamplighter automata ${\bf A}_{q_0}$ and ${\bf A}_{q_1}$  as follows:
\begin{eqnarray*}
 &&{\bf A}_{q_0} (0011101100000\cdots)
  = 1101100101111\cdots\\
 &&{\bf A}_{q_1} (0011101100000\cdots)
  = 0101100101111\cdots
\end{eqnarray*}

Let $a_i$ be the infinite matrix representations of ${\bf A}_{q_i}$ for $i=0,1$.
They decompose into $2$ by $2$ matrices with operator valued entries,
with respect to 
$$S^{\mathbb N}= S^{\mathbb N}_0
 \sqcup S^{\mathbb N}_1$$
where $S^{\mathbb N}_i =\{(i,s_1, \dots): s_i \in S\}$.

In the lamplighter  case, we have two operator recursions \cite{grizuk}
\begin{eqnarray}
\label{lamplighter operators}
 a_{0} 
= \left(\begin{array}{cc}
   0&a_{1} \\   a_{0}&0 \\
        \end{array}\right),
\quad
 a_{1} 
= \left(\begin{array}{cc}
   a_{0} &0\\   0&a_{1}  \\
        \end{array}\right),
\end{eqnarray}
where $a_1^{-1} a_0$ corresponds to $v$
and $a_0$ to $u$.

\section{BBS with carrier capacity}\label{sec:bbsc}

The BBS is one of the ultradiscrete integrable systems. The BBS is composed of an array of infinitely many boxes, finite number of  balls in the boxes, and a carrier of balls. Each box can contain only one ball and the carrier can hold arbitrary number of balls. The evolution rule from time $j$ to time $j+1$  is defined as follows. The carrier moves from left to right and passes each box. When the carrier passes a box containing a ball, the carrier gets the ball; when the carrier passes an empty box, if the carrier holds balls, the carrier puts one ball into the box.

\makeatletter
\newcommand{\BA}{\circle{8}}
\newcommand{\BB}{\circle*{8}}
\newcommand{\BC}{\circle{9}}
\newcommand{\BD}{\circle*{9}}
\makeatother
\begin{figure}[h]
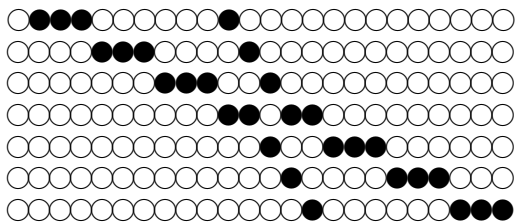

\begin{center}
\noindent
{
\hspace*{-1mm}\BA\BB\BB\BB\BA\BA\BA\BA \BA\BA\BB\BA\BA \BA\BA\BA\BA\BA \BA\BA\BA\BA\BA\BA \\
\BA\BA\BA\BA\BB\BB\BB\BA \BA\BA\BA\BB\BA \BA\BA\BA\BA\BA \BA\BA\BA\BA\BA\BA \\
\BA\BA\BA\BA\BA\BA\BA\BB \BB\BB\BA\BA\BB \BA\BA\BA\BA\BA \BA\BA\BA\BA\BA\BA \\
\BA\BA\BA\BA\BA\BA\BA\BA \BA\BA\BB\BB\BA \BB\BB\BA\BA\BA \BA\BA\BA\BA\BA\BA \\
\BA\BA\BA\BA\BA\BA\BA\BA \BA\BA\BA\BA\BB \BA\BA\BB\BB\BB \BA\BA\BA\BA\BA\BA \\
\BA\BA\BA\BA\BA\BA\BA\BA \BA\BA\BA\BA\BA \BB\BA\BA\BA\BA \BB\BB\BB\BA\BA\BA \\
\BA\BA\BA\BA\BA\BA\BA\BA \BA\BA\BA\BA\BA \BA\BB\BA\BA\BA \BA\BA\BA\BB\BB\BB \\
}
\end{center}
\caption[11]{A two-soliton interaction of the BBS} 
\end{figure}

Let us describe BBS with carrier capacity $k$ \cite{MatsuTaka}. In this case the carrier can hold at most $k$
balls. The only difference with the previous situation is that when the carrier holds $k$ balls and 
passes a box containing a ball, the carrier does nothing. 

Similar to the case of KdV,  the BBS with carrier capacity $k$ can be obtained from the discrete modified KdV equation \cite{TH}
\begin{align}
\label{disc-mKdV}
v_{n+1}^{(t+1)}
\dfrac{(1+\alpha) v_{n}^{(t+1)} + \delta }{(1+\delta) v_{n}^{(t+1)} + \alpha}
=v_n^{(t)}
\dfrac{(1+\alpha) v_{n+1}^{(t)} + \delta }{(1+\delta) v_{n+1}^{(t)} + \alpha},
\end{align}
where $\alpha,\delta$ are constants,
which reduces to the modified KdV equation
\begin{align*}
  \frac{\partial {v}}{\partial t} + 6\beta {v}^2\frac{\partial {v}}{\partial x}+ \dfrac{1}{4\beta} \frac{\partial^3 {v}}{\partial {x}^3}=0,
\end{align*}
where $\beta$ is a constant.
The BBS with carrier capacity $k$
is presented by
\begin{align*}
\widetilde B_n^{(t+1)} 
= \min\left(1-\widetilde B_n^{(t)}, \sum_{j=-\infty}^{n-1}(\widetilde B_j^{(t)}-\widetilde B_j^{(t+1)})\right)
 +\max\left(0,\sum_{j=-\infty}^{n}(\widetilde B_j^{(t)}-\widetilde B_{j-1}^{(t+1)})-k\right).
\end{align*}

\begin{lem}\label{lem:3.1}
The diagram expression of the BBS with carrier capacity $k$ is given by
\begin{center}
 \includegraphics[height=1.5cm]{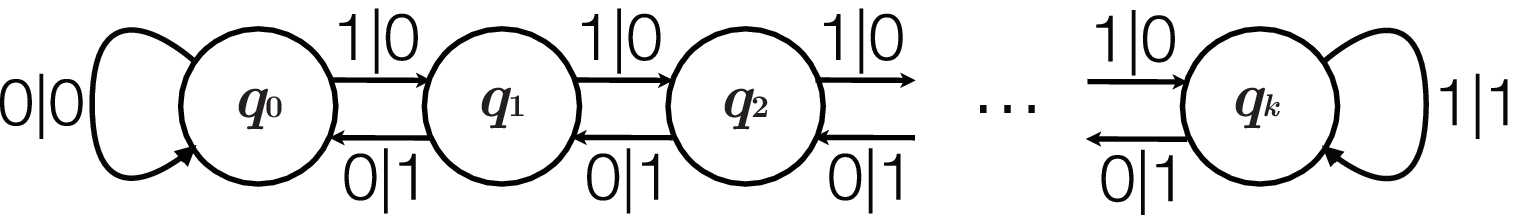}
\end{center}

The (simple) BBS is obtained as the limiting case of the above automaton
with $k \to \infty$.
\end{lem}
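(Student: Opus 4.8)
The plan is to read the automaton directly off the physical carrier rule and then match its input/output behaviour against the defining min/max formula. I introduce $k+1$ states $q_0,q_1,\dots,q_k$ over the alphabet $S=\{0,1\}$ (with $0$ an empty box and $1$ an occupied box), where the subscript of the current state records the number of balls presently held by the carrier, and I scan the boxes from left to right starting in state $q_0$ (empty carrier). The carrier rule of Section~\ref{sec:bbsc} then dictates the quadruple $(\varphi,\psi)$: reading a ball ($1$) in state $q_m$ with $m<k$ sends $q_m\to q_{m+1}$ and outputs $0$ (the carrier collects the ball and empties the box), while in the saturated state $q_k$ it keeps $q_k$ and outputs $1$ (the carrier is full and does nothing); reading an empty box ($0$) in state $q_m$ with $m>0$ sends $q_m\to q_{m-1}$ and outputs $1$ (the carrier deposits a ball), while in $q_0$ it keeps $q_0$ and outputs $0$. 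These four rules are exactly the arrows of the diagram BBS$_k$, so what remains is to verify that this Mealy dynamics reproduces the evolution formula for $\widetilde B_n^{(t+1)}$.

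The key identification is that the carrier load upon arriving at box $n$, call it $c_n$, equals the partial sum $c_n=\sum_{j=-\infty}^{n-1}(\widetilde B_j^{(t)}-\widetilde B_j^{(t+1)})$, which is finite because only finitely many boxes are occupied and $c_n=0$ for $n\ll 0$. First I would prove by induction on $n$ that $c_n\in\{0,1,\dots,k\}$ and that it obeys the clamped carrier recursion $c_{n+1}=\max(0,\min(k,c_n+2\widetilde B_n^{(t)}-1))$; the inductive step uses $c_{n+1}-c_n=\widetilde B_n^{(t)}-\widetilde B_n^{(t+1)}$ together with the output rule just described. Once $c_n$ is identified with the state index, reading the output bit off the two terms of the formula is a short case check. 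I would also record the reindexing $\sum_{j=-\infty}^{n}(\widetilde B_j^{(t)}-\widetilde B_{j-1}^{(t+1)})=c_n+\widetilde B_n^{(t)}$, which turns the second term into a saturation indicator. When $\widetilde B_n^{(t)}=1$ the first term $\min(1-\widetilde B_n^{(t)},c_n)$ vanishes and the second term $\max(0,c_n+\widetilde B_n^{(t)}-k)$ equals $1$ precisely when $c_n=k$; when $\widetilde B_n^{(t)}=0$ the first term equals $1$ precisely when $c_n>0$ and the second term vanishes. In both cases the sum equals exactly the output $\psi(q_{c_n},\widetilde B_n^{(t)})$ prescribed by the diagram, which closes the main identity.

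The limiting statement is then immediate: for any fixed finite ball configuration the carrier load $c_n$ is bounded by the (finite) total number of balls, so once $k$ exceeds this bound the saturated state $q_k$ is never reached, the term $\max(0,c_n+\widetilde B_n^{(t)}-k)$ is identically zero, and the formula collapses to $\widetilde B_n^{(t+1)}=\min(1-\widetilde B_n^{(t)},\sum_{j=-\infty}^{n-1}(\widetilde B_j^{(t)}-\widetilde B_j^{(t+1)}))$, which is the simple BBS equation \eqref{s10}. At the level of diagrams this corresponds to deleting the saturating self-loop at $q_k$ and letting the chain of states grow without bound, so BBS is recovered as the direct limit of the BBS$_k$.

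I expect the main obstacle to be the inductive control of the partial sums, namely proving that $c_n$ never leaves $\{0,\dots,k\}$ and that the physical ``do nothing'' rules at the two extreme states $q_0$ and $q_k$ are faithfully encoded by the clamping $\max(0,\cdot)$ and the capacity-$k$ cutoff. Everything else is a finite case analysis, but this boundedness is where the carrier-capacity constraint genuinely enters and where it must be matched against the two nonlinear terms of the formula.
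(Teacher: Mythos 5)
Your proposal is correct, and its first paragraph coincides with the paper's entire proof: the paper identifies the state $q_i$ with the carrier holding $i$ balls, reads the four transitions off the verbal carrier rule of Section~\ref{sec:bbsc}, checks the two boundary cases ($q_0$ with input $0$, $q_k$ with input $1$), and stops there --- it neither verifies the min/max formula nor argues the $k\to\infty$ claim explicitly. What you do differently is to take the evolution formula for $\widetilde B_n^{(t+1)}$ (quoted from \cite{MatsuTaka}) as the object to be matched, and to prove the match via the identification $c_n=\sum_{j=-\infty}^{n-1}(\widetilde B_j^{(t)}-\widetilde B_j^{(t+1)})$, the clamped recursion $c_{n+1}=\max(0,\min(k,c_n+2\widetilde B_n^{(t)}-1))$, and the reindexing $\sum_{j\le n}(\widetilde B_j^{(t)}-\widetilde B_{j-1}^{(t+1)})=c_n+\widetilde B_n^{(t)}$; your two-case check is right (for $\widetilde B_n^{(t)}=1$ the $\max$ term equals $1$ exactly when $c_n=k$, and for $\widetilde B_n^{(t)}=0$ the $\min$ term equals $1$ exactly when $c_n>0$), and it buys a genuinely stronger conclusion: equivalence of the diagram with the ultradiscrete equation itself, not merely with the informal carrier description. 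Likewise your bounded-load argument supplies an actual proof of the lemma's second sentence --- once $k$ exceeds the total ball number the saturation term vanishes identically and the formula collapses to \eqref{s10} --- which the paper leaves implicit. The one point to phrase carefully is the apparent circularity in your induction: $\widetilde B_n^{(t+1)}$ enters both the definition of $c_{n+1}$ and the output being verified, so the induction hypothesis at box $n$ should be the conjunction ``the automaton state equals $c_n$ and the formula's output agrees with the automaton's output at all boxes $j<n$''; with that phrasing, which your setup already supports, the argument is complete.
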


\begin{proof}
The state $q_i$ corresponds to the situation when the carrier holds $i$ balls. 
Thus we start at the state $q_0$. 
If we have 1 as the input we go from the state $q_i$  to $q_{i+1}$ if 
$i < k$ and we change 1 to 0. This corresponds to the fact the carrier 
picks up the ball if the number of balls it 
already holds is $i < k$.
If we have 0 as the input we go from the state $q_i$  to $q_{i-1}$ if 
$i >0$ and change 1 to 0. This corresponds to the fact the carrier 
puts the ball if the number of balls it 
already holds is at least 1. It remains to check the situation for $q_0$ with the input 0 and
for $q_k$ with the input 1. The first one corresponds to the carrier with 0 balls passing an
empty box (it does nothing and still holds no balls) and the last one to  the carrier with $k$ balls passing a box with a ball (it does nothing and still holds $k$ balls). 
\end{proof}

\subsection{BBS translation (carrier capacity $k=1$)}
\label{sec:bbs_translation}
The BBS translation can be represented by 
\begin{center}
 \includegraphics[height=1.5cm]{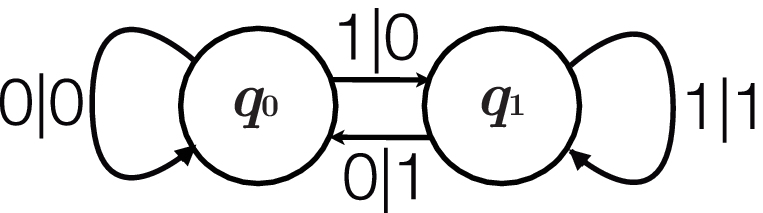}
\end{center}
For example, we give actions of the BBS translation ${\bf A}_{q_0}$ and ${\bf A}_{q_1}$  as follows:
\begin{eqnarray*}
 &&{\bf A}_{q_0} (0011101100000\cdots)
  = 0001110110000\cdots\\
 &&{\bf A}_{q_1} (0011101100000\cdots)
  = 1001110110000\cdots
\end{eqnarray*}

Let $a_i$ be the infinite matrix representations of ${\bf A}_{q_i}$ for $i=0,1$. 
Then we have two operator recursions
\begin{eqnarray}
\label{bbs operators}
 a_{0} 
= \left(\begin{array}{cc}
   a_{0}&a_{1} \\   0&0 \\
        \end{array}\right),
\quad
 a_{1} 
= \left(\begin{array}{cc}
   0&0\\   a_{0}&a_{1} \\
        \end{array}\right).
\end{eqnarray}

We can describe the action of $a_0$ and $a_1$ on the binary sequences of length $n$ 
by the $2^n \times 2^n$ matrices $a_0^{(n)}$ and $a_1^{(n)}$.
From the definition of our automaton, they satisfy the following recurrence relations:
\begin{eqnarray*}
&&
a_{0}^{(0)}=
a_{1}^{(0)}=1,
\\
&& a^{(n+1)}_{0} 
= \left(\begin{array}{cc}
   a^{(n)}_{0}& a^{(n)}_{1} \\[1mm]   0 & 0  \\
        \end{array}\right),
\quad
 a^{(n+1)}_{1} 
= \left(\begin{array}{cc}
      0 & 0  \\a^{(n)}_{0}& a^{(n)}_{1} \\
        \end{array}\right).
\end{eqnarray*}

In the case when ${\bf A}_{q_j}$ for $j=0,1$ give automorphisms
so that they constitute an automata group, the transition operator 
is given by $M = \frac{1}{4}(a_0+a_0^* + a_1+a_1^*)$
which describes step $1$ random walk. 
In the general case when the actions are not invertible, 
one can still consider the same operators, since the adjoint operators coincide with the inverse ones
for the invertible  case, since they are unitaries.
The
 actions by ${\bf A}_{q_j}$ are always deterministic, while its adjoint ${\bf A}_{q_j}$ 
 are  non-deterministic  in non invertible case. A key observation is that random walk on 
 semi-groups is still possible to formulate and would be quite natural,
  if we allow non deterministic actions and interpret them as probability processes.
In this paper we shall introduce the transition operator over the semi-groups by the same formula.

Let us  consider the filtrations of the  transition operators:
\begin{eqnarray*}
 M_{k=1}^{(n)}=\dfrac{1}{4}\left(a^{(n)}_{0} + a^{(n)*}_{0}+a^{(n)}_{1} + a^{(n)*}_{1}\right).
\end{eqnarray*}

\subsection{BBS with carrier capacity $k=2$}
In analogy to $k=1$ case, for $k=2$, we can consider the following operators.
\begin{center}
  \includegraphics[height=1.5cm]{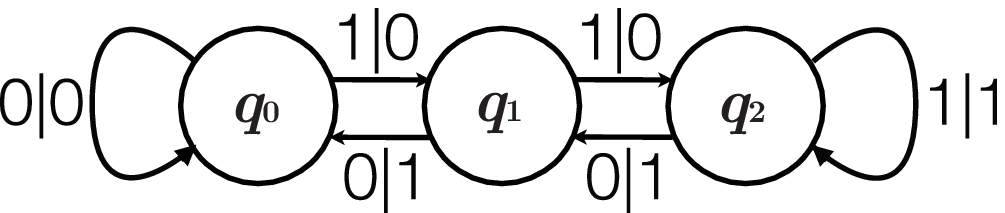}
\end{center} 
For example, we give actions of the BBS$_{k=2}$ ${\bf A}_{q_0}, {\bf A}_{q_1}$ and ${\bf A}_{q_2}$  as follows:
\begin{eqnarray*}
 &&{\bf A}_{q_0} (0011101100000\cdots)
  = 0000110111000\cdots\\
 &&{\bf A}_{q_1} (0011101100000\cdots)
  = 1000110111000\cdots\\
 &&{\bf A}_{q_2} (0011101100000\cdots)
  = 1100110111000\cdots
\end{eqnarray*}
Notice that the state $q_j$ corresponds to the carrier with $j$-number of balls. The action ${\bf A}_{q_j}$ represents the time-evolution of BBS${}_k$ dynamics with carrier with $j$-number of balls as an initial state.

Let $a_i$ be the infinite matrix representations of ${\bf A}_{q_i}$ for $i=0,1,2$. 
Then we have three operator recursions
\begin{eqnarray*}
 a_0 
= \left(\begin{array}{cc}
   a_0&a_1 \\   0&0 \\
        \end{array}\right),
\quad
 a_{1}  
= \left(\begin{array}{cc}
   0&a_2\\   a_0&0 \\
        \end{array}\right),
\quad
 a_2  
= \left(\begin{array}{cc}
   0&0\\   a_1&a_2 \\
        \end{array}\right).
\end{eqnarray*}
The action of $a_0, a_1$ and $a_2$ on the binary sequences of length $n$ can be described by the $2^n \times 2^n$ matrices $a_0^{(n)}, a_1^{(n)}$ and $a_2^{(n)}$ which satisfy the following recurrence relations:
\begin{eqnarray*}
&&
a_{0}^{(0)}=
a_{1}^{(0)}=
a_{2}^{(0)}=1,
\\
&& a^{(n+1)}_{0}  
= \left(\begin{array}{cc}
   a^{(n)}_{0} & a^{(n)}_{1}  \\[1mm]   0 & 0  \\
        \end{array}\right),
\quad
 a^{(n+1)}_{1}
= \left(\begin{array}{cc}
      0 & a^{(n)}_{2}  \\a^{(n)}_{0} & 0 \\
        \end{array}\right),
\quad
 a^{(n+1)}_{2} 
= \left(\begin{array}{cc}
      0 & 0  \\a^{(n)}_{1}& a^{(n)}_{2} \\
        \end{array}\right).
\end{eqnarray*}

The filtrations of the transition operators are given by
\begin{eqnarray*}
 M^{(n)}_{k=2}=\dfrac{1}{6}\left(a^{(n)}_{0} + a^{(n)*}_{0}+a^{(n)}_{1} + a^{(n)*}_{1}+a^{(n)}_{2} + a^{(n)*}_{2}\right).
\end{eqnarray*}
In the next sections, we will verify that these transition operators are stochastic and analyze in detail the spectral properties of the transition operator for $k=1$ theoretically  and $k \ge 2$ numerically.

\section{Stochastic matrices}
Study of countably 
state ergodic Markov chain is an important subject in relation with statistic mechanics.
However because of countably many number of the states, construction of the probability measures
on the path space has not been so developed. 
It follows from Corollary \ref{cor:5.4} below that BBS transition operators $M^{(n)}_k$ give the ergodic Markov chains
over the set of paths $\Omega(n)$ which are given by
$\Omega(n) = \{(w_1,w_2, \dots) \mid w_i \in \{1, \dots, 2^{n}\} \}$
with 
the unique ergodic distributions
$\pi^{(n)} =(\pi_1^{(n)}, \dots , \pi^{(n)}_{2^n})$
\cite{Sinai_book}.
Let $m_k$ be the probability measure on $\Omega(n)$.
One may expect that the family of ergodic Markov chains defined by $\{M_k^{(n)}\}_{n=1}^{\infty}$
can give a countably state Markov chains over the path space:
$$\Omega(\infty) =  
 \{(w_1,w_2, \dots) \mid w_i \in {\mathbb N} \ \}$$
which is expected ergodic at the limit.

Let us verify stochastic property of the transition operators for 
BBS$_{k}$.
We define a sequence of $k+1$ matrices $(a_0^{(n)}, \ldots , a_k^{(n)})$
of dimension $2^n$, for $n = 0, 1 , \ldots$ 
by  the following matrix recursion ($0$ represents here $2^n \times 2^n$ null matrix).

$$ a_0^{(n+1)}  =  \left (
\begin{array}{cc}
a_0^{(n)} &  a_1^{(n)} \\
0  &  0
\end{array}
\right )
$$

For $i=1, \ldots, k-1$

$$ a_i^{(n+1)}  =  \left (
\begin{array}{cc}
0 &  a_{i+1}^{(n)} \\
a_{i-1}^{(n)}  &  0
\end{array}
\right )
$$

and  

$$ a_k^{(n+1)}  =  \left (
\begin{array}{cc}
0 &  0  \\
a_{k-1}^{(n)}  &  a_{k}^{(n)}
\end{array}
\right )
$$

with the initial data $a_i^{(0)} =1$ for all $i = 0, \ldots , k$.

We consider the following $2^n \times 2^n$ matrix $M_k^{(n)}$
$$ 
M_k^{(n)}  =  \frac{1}{2k+2}(a_0^{(n)}  + a_0^{(n)*} + \ldots +  a_k^{(n)}  + a_k^{(n)*} ) .
$$

\begin{prop}\label{prop61}
The matrix $M_k^{(n)}$ is double stochastic for all $k \geq 1$, $n \geq 0$, i.e. 
the sum of each row and each  column is equal to $1$.
\end{prop}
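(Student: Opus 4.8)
The plan is to reduce the double stochasticity to a single row-sum identity and then prove that identity by induction on $n$ using the block recursion. First I would record the two elementary facts that drive everything. Observe that $M_k^{(n)}$ is symmetric: since $(a_i^{(n)}+a_i^{(n)*})^*=a_i^{(n)*}+a_i^{(n)}$, we have $M_k^{(n)*}=M_k^{(n)}$, so the row sums and the column sums of $M_k^{(n)}$ coincide, and it suffices to prove that every column of $M_k^{(n)}$ sums to $1$. Next I would show by induction on $n$ that each $a_i^{(n)}$ is a $0$--$1$ matrix having exactly one nonzero entry in each column (reflecting that the underlying Mealy map sends each input word to a single output word). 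The base case $a_i^{(0)}=1$ is clear, and in each of the three block shapes of the recursion every column is obtained from a single column of one $a_j^{(n)}$ padded by a zero block, so the property propagates. Consequently $\sum_{i=0}^{k}a_i^{(n)}$ has every column sum equal to $k+1$.

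Writing $\mathbf 1$ for the all-ones column vector and $\rho_i^{(n)}=a_i^{(n)}\mathbf 1$ for the vector of row sums of $a_i^{(n)}$, the column sum of $M_k^{(n)}$ equals $\tfrac{1}{2k+2}\bigl((k+1)\mathbf 1+\sum_i\rho_i^{(n)}\bigr)$, because the column sums of $a_i^{(n)*}$ are exactly the row sums of $a_i^{(n)}$. Thus the proposition is equivalent to the claim $S^{(n)}:=\sum_{i=0}^{k}\rho_i^{(n)}=(k+1)\mathbf 1$, which I would prove by induction. Multiplying each block recursion by $\mathbf 1$ gives $\rho_0^{(n+1)}=\bigl(\rho_0^{(n)}+\rho_1^{(n)},\,0\bigr)$, $\rho_i^{(n+1)}=\bigl(\rho_{i+1}^{(n)},\,\rho_{i-1}^{(n)}\bigr)$ for $1\le i\le k-1$, and $\rho_k^{(n+1)}=\bigl(0,\,\rho_{k-1}^{(n)}+\rho_k^{(n)}\bigr)$, written as top/bottom halves. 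Summing over $i$, the top halves contribute $\rho_0^{(n)}+\rho_1^{(n)}+\sum_{i=1}^{k-1}\rho_{i+1}^{(n)}=\sum_{i=0}^{k}\rho_i^{(n)}$ and the bottom halves contribute $\sum_{i=1}^{k-1}\rho_{i-1}^{(n)}+\rho_{k-1}^{(n)}+\rho_k^{(n)}=\sum_{i=0}^{k}\rho_i^{(n)}$, so $S^{(n+1)}=(S^{(n)},S^{(n)})$. With $S^{(0)}=k+1$ this yields $S^{(n)}=(k+1)\mathbf 1$ for all $n$, and combining with the symmetry reduction finishes the proof.

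The only real subtlety, the step I would watch most carefully, is the boundary bookkeeping in this summation. The generic states $1\le i\le k-1$ merely shift the index of $\rho$ between the two halves, which on its own would leak the extreme terms $\rho_0^{(n)}$ and $\rho_k^{(n)}$; it is precisely the doubled terms in the $i=0$ and $i=k$ rules that restore them, so that each half telescopes back to the full sum $S^{(n)}$. Verifying that these boundary contributions fit exactly, rather than over- or under-counting, is the heart of the argument; the rest is the routine two-line induction and the symmetry reduction above. Note that the individual vectors $\rho_i^{(n)}$ are emphatically \emph{not} constant, so the argument genuinely relies on the collapse of the sum rather than on any per-state stochasticity.
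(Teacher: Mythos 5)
Your proof is correct and takes essentially the same route as the paper's: symmetry reduces double stochasticity to column sums, the non-transposed part is handled by the one-nonzero-entry-per-column property of each $a_i^{(n)}$, and the transposed part by induction on $n$ through the block recursion. Your key identity $S^{(n+1)}=(S^{(n)},S^{(n)})$ is exactly the paper's observation that in $\sum_i a_i^{(n+1)*}$ the blocks in each block-column add up to $\sum_i a_i^{(n)*}$, merely evaluated against the all-ones vector.
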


\begin{proof}
The matrix $M_k^{(n)}$ is symmetric and therefore it suffices to prove that the sum of columns 
is constant. 

Clearly the recursive relations for $a_0^{{(n+1)}} , \ldots , a_k^{{(n+1)}}$ show that 
the matrix
we obtain 
from each of them is the matrix with constant sum of columns (equal to $k$).

Thus it is enough to show that 
$ a_0^{(n)*}   + \ldots  +  a_k^{(n)*}$
has constant column sum. Let us prove this by induction. It is clear
for $n=0$. 
Then using recursion formula
$$ a_0^{(n+1)*}   + \cdots  +  a_k^{(n+1)*}
=
 \left (
\begin{array}{cc}
 a_0^{(n)*}  &   a_{k-1}^{(n)*}  + \cdots  +  a_0^{(n)*}  \\
 a_1^{(n)*}  + \cdots  +  a_k^{(n)*}  &  a_k^{(n)*}  
\end{array}
\right )
$$
and thus the sum of the left matrix blocks and right matrix blocks is equal to
$$ a_0^{(n)*}  + \ldots  +  a_k^{(n)*}.$$
Therefore the statement follows by induction.
\end{proof}

\subsection{Spectral computation for BBS translation ($k=1$)}
Stochastic property closely related to random walk on each level set of the binary tree.
On the other hand structure of the random walk heavily depends on their spectral distribution.
First we compute the spectral distribution of the transition operator for $k=1$:
\begin{eqnarray*}
 M_{k=1}^{(n)}=\dfrac{1}{4}\left(a^{(n)}_{0} + a^{(n)*}_{0}+a^{(n)}_{1} + a^{(n)*}_{1}\right).
\end{eqnarray*}

Define the counting spectral measures of
$M^{(n)}_{k}$, 
i.e. $\sigma_k^{(n)} : [0, 1] \to [0,1]$ and for $x \in [0,1]$ by:
$$
 \sigma_k^{(n)}(x) 
= \dfrac{\sharp\left\{\lambda \in \mbox{Sp}(M_k^{(n)})\mid
			     \lambda \le 2(k+1)\cos(\pi x)\right\}}
        {\sharp\left\{\lambda \in \mbox{Sp}(M_k^{(n)})\right\}}.$$ 
Let us denote the multiplicity of eigenvalue $\lambda$ of
$M^{(n)}_k$ 
by 
\begin{eqnarray*}
 m^{(n;k)}(\lambda)=\sharp\{\lambda \in \mbox{Sp}(M_k^{(n)})\}. 
\end{eqnarray*}
In particular we denote the multiplicity of eigenvalue $\cos\left(pq^{-1}\pi\right)$
of $M^{(n)}_k$ by $m_{p,q}^{(n;k)}=
m^{(n;k)}\left(\cos\left(pq^{-1}\pi\right)\right)$.

We consider the case $k=1$ and provide the computation of eigenvalues of $M_{k=1}^{(n)}$.
Our computation on the spectra verify the following:
\begin{thm}\label{thm71}
\begin{eqnarray*}
\mbox{Sp}\left(M_{k=1}^{(n)}\right) =\mbox{Sp}\left(\dfrac{1}{4}\sum_{j=0}^{1}\left( a^{(n)}_{j}+a^{(n)*}_{j}\right)\right)
 = \left\{ 1 \cup \cos\left(\dfrac{p}{q}\pi\right)\bigg|\,
p,q\in {\mathbb N}, 1\le p<q\le n+1 \right\} 
\end{eqnarray*}
If $p$ and $q$ are mutually prime, then the 
multiplicity of eigenvalue $\cos\left(pq^{-1}\pi\right)$,
denoted by $m_{p,q}^{(n;1)}$, is given by
\begin{eqnarray*}
 m_{p,q}^{(n;1)}
 =  \left[2^n \left(\dfrac{2^{-q}-2^{-q\left(\left[\frac{n}{q}\right]+1\right)}}{1-2^{-q}}\right)\right]
\end{eqnarray*}
\end{thm}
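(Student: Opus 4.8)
The plan is to exploit the self-similar $2\times 2$ block structure of the family $\{a_0^{(n)},a_1^{(n)}\}$ to reduce the spectral problem at level $n$ to one of half the size, and then to extract a scalar Chebyshev-type recurrence whose characteristic roots are exactly the numbers $\cos(p\pi/q)$. First I would record two structural facts. Writing $J$ for the bit-complementation involution of $\{0,1\}^n$, the defining recursions give $a_1^{(n)}=J a_0^{(n)} J$, so that $J$ commutes with $M_{k=1}^{(n)}$ and one may split the space into the $\pm 1$-eigenspaces of $J$; this halves the bookkeeping. Setting $N_n=a_0^{(n)}+a_1^{(n)}$, a direct check gives $4 M_{k=1}^{(n)}=N_n+N_n^{*}$, where $N_n$ is the directed de Bruijn shift $e_j\mapsto e_{0\,j_0\cdots j_{n-2}}+e_{1\,j_0\cdots j_{n-2}}$, so that $4M_{k=1}^{(n)}$ is the adjacency operator of the symmetrized de Bruijn graph on $\{0,1\}^n$.

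The engine is the factorization $N_n=D'^{*}D$, $N_n^{*}=D^{*}D'$, where $D,D'\colon\mathbb C^{2^n}\to\mathbb C^{2^{n-1}}$ delete the last, respectively the first, letter; these satisfy $DD^{*}=D'D'^{*}=2I$ and $DD'^{*}=N_{n-1}$. With $\mathcal D=\binom{D}{D'}$ and $K=\left(\begin{smallmatrix}0&I\\I&0\end{smallmatrix}\right)$ one has $4M_{k=1}^{(n)}=\mathcal D^{*}K\mathcal D$, and since $\det(AB-\mu I)=\det(BA-\mu I)$ for square matrices, the characteristic polynomial of $4M_{k=1}^{(n)}$ equals that of $K\mathcal D\mathcal D^{*}=\left(\begin{smallmatrix}N_{n-1}^{*}&2I\\2I&N_{n-1}\end{smallmatrix}\right)$. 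Because the two bottom blocks commute, this equals the half-size determinant $\det\big((\mu-N_{n-1}^{*})(\mu-N_{n-1})-4I\big)$; expanding and using $N_{n-1}^{*}N_{n-1}=2(I+F)$, with $F$ the last-letter flip, rewrites it through $4M_{k=1}^{(n-1)}=N_{n-1}+N_{n-1}^{*}$ and $F$. Diagonalizing in a basis adapted to $F$ collapses the relation to a scalar three-term recurrence of Chebyshev type; its solvability forces the admissible $\mu$ to be $4\cos(p\pi/q)$, while the requirement that an eigenvector not degenerate before depth $n$ yields the bound $q\le n+1$. This produces the eigenvalue set claimed, and dividing by $4$ gives $\mathrm{Sp}(M_{k=1}^{(n)})$.

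For the multiplicities I would convert the recurrence into one for the counting function $m_{p,q}^{(n;1)}$: each passage from level $n-1$ to level $n$ lifts every existing eigenvalue (the two-to-one maps $D,D'$ essentially double the eigenspaces), creates the new extremal eigenvalues with $q=n+1$, and feeds a controlled block into the kernel $\mu=0$; solving the resulting linear recurrence gives the stated geometric sum $\big[\,2^n(2^{-q}-2^{-q([n/q]+1)})/(1-2^{-q})\,\big]=\sum_{i=1}^{[n/q]}2^{\,n-qi}$. The main obstacle is precisely this final accounting. The recurrence of the previous step does not close on $M_{k=1}^{(n-1)}$ alone, since $N_{n-1}^{*}N_{n-1}=2(I+F)$ drags in the auxiliary flip $F$; as in the Grigorchuk--Zuk density-of-states computation \cite{grizuk} one must track an enlarged family of matrices simultaneously and control exactly how the large kernel of the de Bruijn operator accumulates at each level, which is where the delicate interaction with the eigenvalue $0$ lives. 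As an independent route and a check, once the permutation conjugacy $\sigma_n^{*}M_B^{(n)}\sigma_n=M_L^{(n)}$ of Theorem~\ref{thm:BBS to LL} is available, the whole statement reduces to the lamplighter spectrum of \cite{grizuk}.
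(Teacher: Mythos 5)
Your opening reduction is correct, and it is a genuinely different packaging of what the paper does. The paper works directly with the block recursions: it introduces the two--parameter family $\Phi_n(\lambda,\mu)=\det\bigl(a_n+a_n^*+b_n+b_n^*-\tfrac12\mu(a_nb_n^*+b_na_n^*)-\lambda\,\mathrm{Id}\bigr)$ and, by row/column operations plus the same commuting--block determinant identity you invoke, proves the renormalization
$\Phi_{n+1}(\lambda,\mu)=(\mu-\lambda)^{2^n}\Phi_n\bigl(\tfrac{2-\lambda^2+\mu^2}{\mu-\lambda},\tfrac{-2}{\mu-\lambda}\bigr)$,
then cites \cite{grizuk} for the extraction of the eigenvalues and multiplicities. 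Your route via the de Bruijn factorization $N_n=D'^*D$, the identity $4M^{(n)}_{k=1}=\mathcal D^*K\mathcal D$, and the cyclic property of the characteristic polynomial is an elegant substitute for those block manipulations; in fact the two approaches converge on the same object, since $\tfrac12(a_nb_n^*+b_na_n^*)$ is (by Lemma \ref{lem63}) exactly the first--letter flip, i.e.\ the auxiliary involution your computation produces.

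The genuine gap is at the decisive step. Having reached $\det\bigl(2F-\mu(N_{n-1}+N_{n-1}^*)+(\mu^2-2)I\bigr)$, you assert that ``diagonalizing in a basis adapted to $F$'' collapses this to a scalar Chebyshev recurrence. That operation is not available: $F$ does not commute with $N_{n-1}+N_{n-1}^*$, so the determinant does not factor over the $\pm1$ eigenspaces of $F$. What is actually required is to run your cyclic trick on the whole two--parameter family rather than on the single member $\mu=0$: using $D'^*D'=I+F'$ one can write $N+N^*-\mu F'-\lambda I=\mathcal D^*\left(\begin{smallmatrix}-\mu & I\\ I & 0\end{smallmatrix}\right)\mathcal D+(\mu-\lambda)I$, and the same half-size reduction then reproduces precisely the paper's recursion above, which can be iterated; only from that iteration do the values $\cos(p\pi/q)$ with $q\le n+1$ and the multiplicity formula emerge. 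Your proposal never derives this recursion, and a one-shot reduction cannot be iterated, so the spectrum claim is not reached. The multiplicity paragraph is likewise heuristic: the eigenspaces are not ``doubled'' by $D,D'$ --- the correct count is $m_{p,q}^{(n;1)}=\sum_{i=1}^{[n/q]}2^{n-qi}\approx 2^n/(2^q-1)$, and it comes out of the orbit analysis of the rational map $(\lambda,\mu)\mapsto\bigl(\tfrac{2-\lambda^2+\mu^2}{\mu-\lambda},\tfrac{-2}{\mu-\lambda}\bigr)$ in \cite{grizuk}, not from any mechanism in your sketch. Your fallback --- conjugating by $\sigma_n$ via Theorem \ref{thm:BBS to LL} and quoting the lamplighter spectrum of \cite{grizuk} --- is valid and non-circular, since the paper proves that conjugacy without using Theorem \ref{thm71}; but it is a replacement of the argument by a different one, not a completion of your main line, so as written the proposal stands or falls with that substitution.
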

In order to simplify the notation we define $a_n = a_0^{(n)}$ and $b_n = a_1^{(n)}$.
\begin{lem}\label{lem63}
For every $n$ 
$$ a_n  a_n^*    +  b_n b_n^*  = 2 \text{Id}_{\,2^n}.$$
\end{lem}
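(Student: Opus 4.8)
The plan is to prove the identity by induction on $n$, exploiting the block structure of the recursion for $a_n = a_0^{(n)}$ and $b_n = a_1^{(n)}$. The base case $n=0$ is immediate: since $a_0 = b_0 = 1$, we get $a_0 a_0^* + b_0 b_0^* = 1 + 1 = 2 = 2\,\text{Id}_1$, as required.

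For the inductive step I would first read off the adjoints from the recursion. Starting from
$$a_{n+1} = \begin{pmatrix} a_n & b_n \\ 0 & 0 \end{pmatrix}, \qquad b_{n+1} = \begin{pmatrix} 0 & 0 \\ a_n & b_n \end{pmatrix},$$
the transposes are $a_{n+1}^* = \begin{pmatrix} a_n^* & 0 \\ b_n^* & 0 \end{pmatrix}$ and $b_{n+1}^* = \begin{pmatrix} 0 & a_n^* \\ 0 & b_n^* \end{pmatrix}$. The one computation with real content is then the pair of block products
$$a_{n+1} a_{n+1}^* = \begin{pmatrix} a_n a_n^* + b_n b_n^* & 0 \\ 0 & 0 \end{pmatrix}, \qquad b_{n+1} b_{n+1}^* = \begin{pmatrix} 0 & 0 \\ 0 & a_n a_n^* + b_n b_n^* \end{pmatrix}.$$

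The structural point that makes this collapse cleanly is that $a_{n+1}$ confines its nonzero entries to the top block-row while $b_{n+1}$ confines them to the bottom block-row. Consequently $a_{n+1} a_{n+1}^*$ lands entirely in the top-left diagonal block and $b_{n+1} b_{n+1}^*$ entirely in the bottom-right diagonal block, with all off-diagonal blocks vanishing and no overlap between the two. Adding them yields a block-diagonal matrix whose two diagonal blocks are each equal to $a_n a_n^* + b_n b_n^*$. Substituting the inductive hypothesis $a_n a_n^* + b_n b_n^* = 2\,\text{Id}_{2^n}$ turns each diagonal block into $2\,\text{Id}_{2^n}$, so the total is $2\,\text{Id}_{2^{n+1}}$, closing the induction.

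I do not anticipate a genuine obstacle: the argument is a routine induction whose only substance is verifying the two block multiplications above and checking that the complementary placement of the zero blocks kills the cross terms. The single point worth stating carefully is that the identity is self-reproducing, since the expression $a_n a_n^* + b_n b_n^*$ reappears verbatim in both diagonal blocks at level $n+1$; this is precisely what allows the scalar $2$ and the identity matrix to propagate without any accumulating correction term.
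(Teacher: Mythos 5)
Your proof is correct and is essentially identical to the paper's: the same induction, with the same two block computations showing that $a_{n+1}a_{n+1}^*$ and $b_{n+1}b_{n+1}^*$ occupy complementary diagonal blocks each equal to $a_n a_n^* + b_n b_n^*$. The only difference is that you spell out the base case and the closing of the induction, which the paper leaves implicit.
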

\begin{proof}
We have

$$
a_{n+1}   a_{n+1}^*  =
 \left (
\begin{array}{cc}
  a_n  &   b_n  \\
 0  &  0
\end{array}
\right )
 \left (
\begin{array}{cc}
  a_n^*  &  0  \\
 b_n^*  &  0
\end{array}
\right )
=
 \left (
\begin{array}{cc}
a_n   a_n^*  +   b_n   b_n^*&  0  \\
0 &  0
\end{array}
\right )
$$

$$
b_{n+1}   b_{n+1}^*  =
 \left (
\begin{array}{cc}
 0  &  0  \\
  a_n  &   b_n  
\end{array}
\right )
 \left (
\begin{array}{cc}
0 &   a_n^*   \\
0 &  b_n^*  
\end{array}
\right )
=
 \left (
\begin{array}{cc}
0 &  0  \\
0 &  a_n   a_n^*  +   b_n   b_n^*
\end{array}
\right )
$$
and the statement follows by induction.
\end{proof}

{\em Proof of Theorem $6.1$:}
Let us put:
$$\Phi_n ( \lambda , \mu ) = \det  ( a_n +  a_n^*  + b_n + b_n^*   -  \frac{1}{2} \mu  ( a_n b_n^*  +  b_na_n^*)
- \lambda Id_{2^n}  )$$

Then by  use of Lemma \ref{lem63},  we have the equalities:
\begin{align*}
\Phi_{n+1}  ( \lambda , \mu )  
& =   \det (  a_{n+1} +  a_{n+1}^*  + b_{n+1} + 
b_{n+1}^*   - \frac{1}{2} \mu  ( a_{n+1} b_{n+1}^*  +  b_{n+1} a_{n+1}^*)
- \lambda Id_{2^{n+1}}  )  \\
&  = 
\det  
\left (
\begin{array}{cc}
a_n + a_n^*  - \lambda &   b_n  + a_n^*  -  \frac{1}{2}\mu ( a_n a_n^*  + b_n b_n^* ) \\
a_n + b_n^* - \frac{1}{2}\mu ( a_n a_n^* + b_n b_n^*) &  b_n +  b_n^* - \lambda 
\end{array}
\right ) \\
&  = 
\det  
\left (
\begin{array}{cc}
a_n + a_n^*  - \lambda &   b_n  + a_n^*  -  \mu   \\
a_n + b_n^* - \mu &  b_n +  b_n^* - \lambda 
\end{array}
\right ) \\
&  = 
\det  
\left (
\begin{array}{cc}
a_n -  b_n  - \lambda  +  \mu  &   b_n  + a_n^*  -  \mu  \\
a_n  - b_n  + \lambda - \mu &  b_n +  b_n^* - \lambda 
\end{array}
\right )  \\
&  = 
\det  
\left (
\begin{array}{cc}
2 \mu  -   2 \lambda    &   a_n^* -  b_n^*  -  \mu + \lambda \\
a_n  - b_n  + \lambda - \mu &  b_n +  b_n^* - \lambda 
\end{array}
\right )  
\end{align*}
Using the fact that 
$$  \det 
\left (
\begin{array}{cc}
A  &   B \\
C &  D 
\end{array}
\right )  = \det (AD - CB)$$ 
provided that $A$ commutes with $C$ we get
\begin{align*} \Phi_{n+1}  ( \lambda , \mu )  
& =  
\det (    (2 \mu  -   2 \lambda) ( b_n +  b_n^* - \lambda)  -  (a_n  - b_n  + \lambda - \mu)
(a_n^* -  b_n^* -\mu+ \lambda ) ) \\
& =  
\det (  ( \mu  -   \lambda)  (a_n +  a_n^*  + b_n + b_n^*)   -   \frac{1}{2} 2 ( a_n b_n^*  +  b_na_n^*)
+ (-2  + \lambda^2  -  \mu^2)  Id_{2^n}  ) 
\end{align*}
Therefore
$$\Phi_{n+1} ( \lambda , \mu ) = ( \mu  -   \lambda)^{2^n}
\Phi_n \left ( \frac{2 - \lambda^2 +  \mu^2}{ \mu - \lambda}  , 
\frac{-2}{\mu - \lambda}  \right ) . $$
This is exactly the formula from \cite{grizuk} which leads to the explicit computation of 
all eigenvalues.

\subsection{Numerical computation of spectra for BBS ($k \geq 2$)}
In order to analyze spectral characteristics of the transition operators for $k \geq 2$,
as a first step, we did numerical computation of the spectral distributions   for $k=2,3,4,5$.
Let us compare the histogram of the spectral distributions for $k=1$ and $2$.
Figures $2$ and $3$ present the histogram of the distributions of the multiple eigenvalues for $k=1$, $n=7$, and $k=2$, $n=14$
respectively.
Roughly we can see their structural similarity.

\begin{figure}
    \begin{minipage}[ht]{0.5\columnwidth}
        \begin{center}
            \includegraphics[height=4cm]{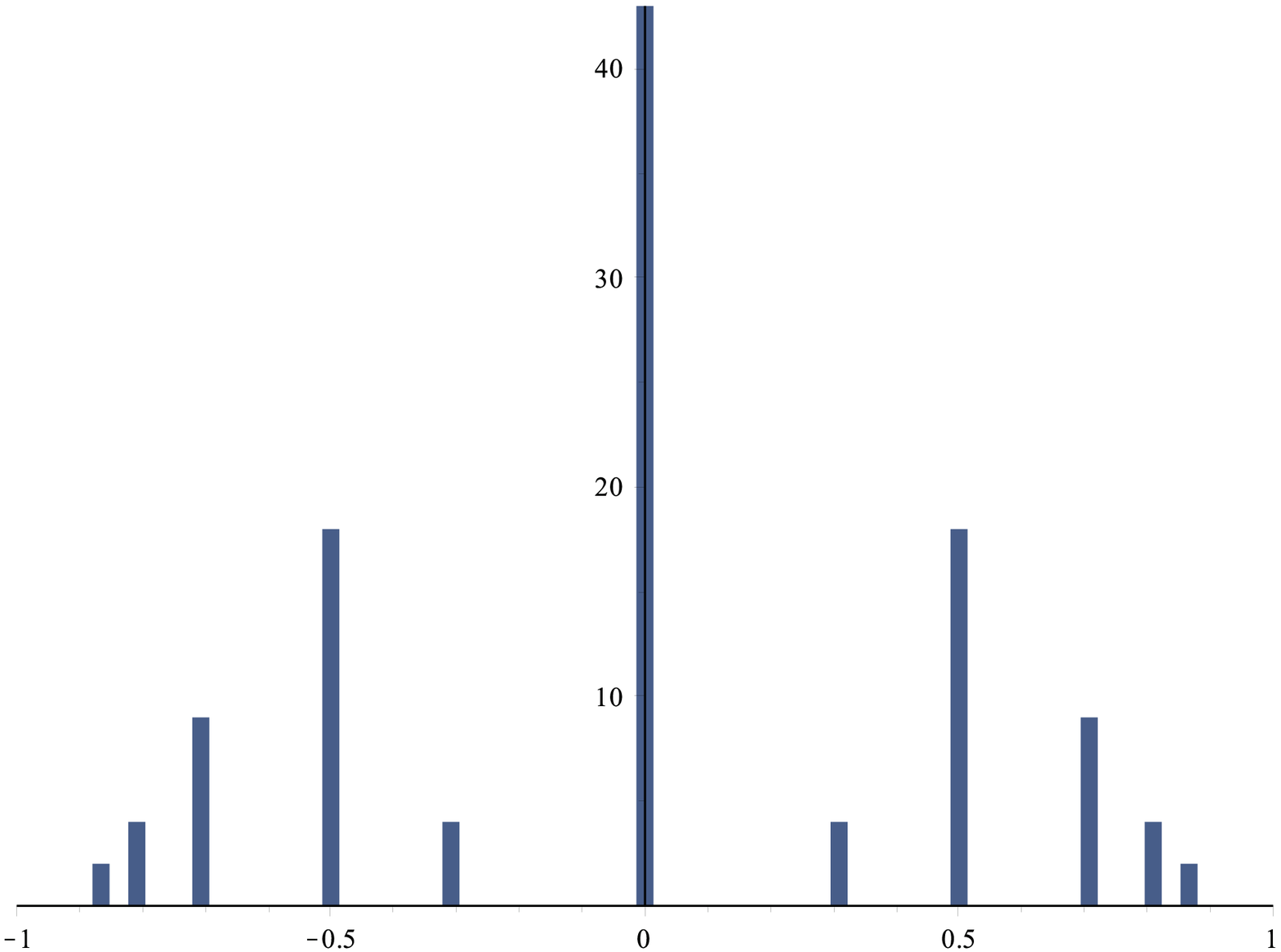}
        \end{center}
        \caption{Distribution of the multiple eigenvalues of $M_{k=1}^{(7)}$}
        \label{fig:left}
    \end{minipage}%
    \begin{minipage}[ht]{0.5\columnwidth}
        \begin{center}
            \includegraphics[height=4cm]{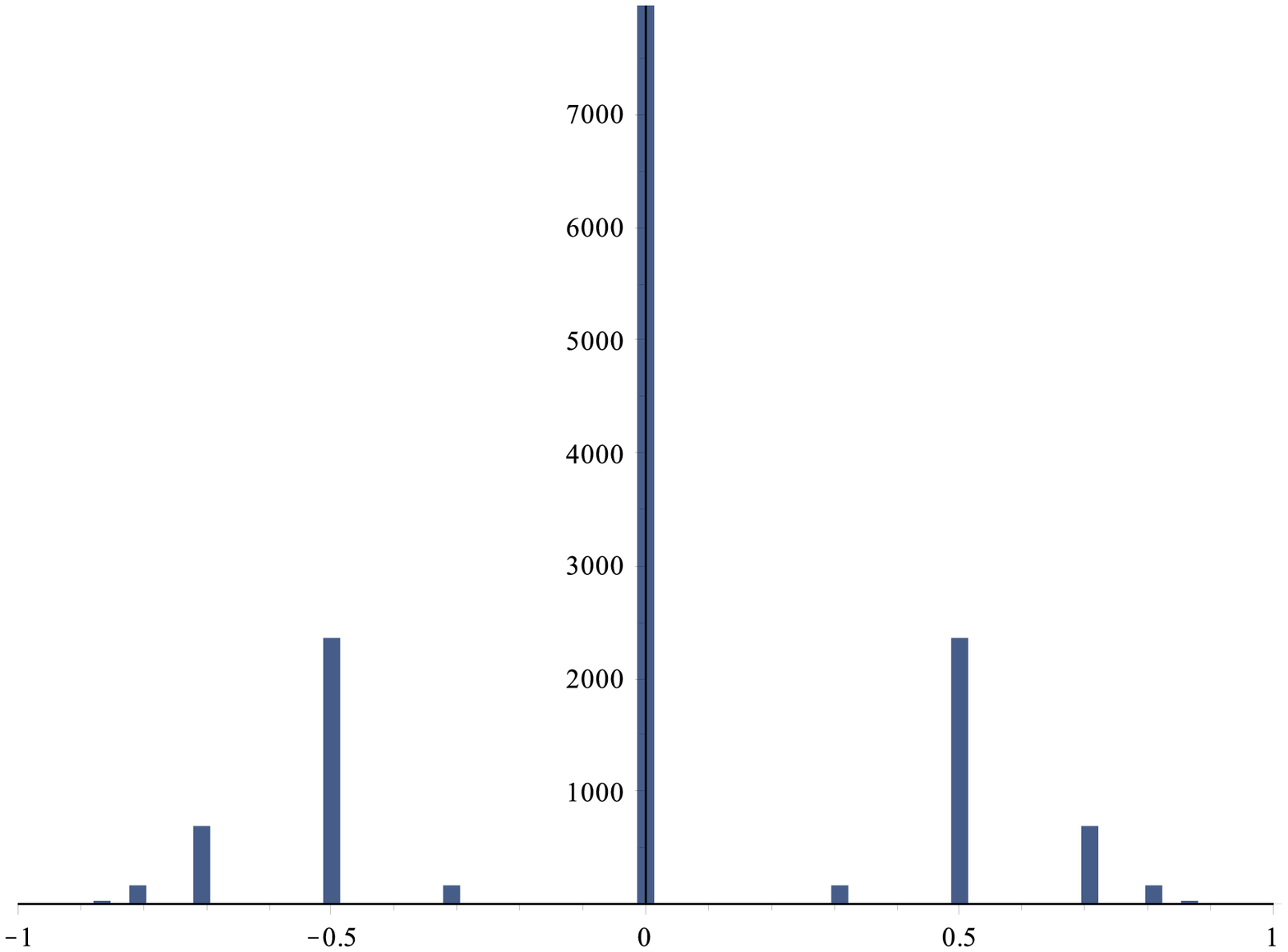}
        \end{center}
         \caption{Distribution of the multiple eigenvalues of $M_{k=2}^{(14)}$}
        \label{fig:right}
    \end{minipage}
\end{figure}

Let us see more detailed distributions for $k=1,2,\ldots,5$ by Tables \ref{table1}, \ldots, \ref{table4} below.
The tables present the distribution of the non-negative eigenvalues with multiplicities larger than or equal to $2$.
We have listed only non-negative eigenvalues, where negative ones appear symmetrically for $k=1$.
For $k=2,3,4,5$ cases also, negative eigenvalues appear almost symmetrically on their multiplicities,
except a few values. Actually their monotonicity with respect to $n$ hold.
We also present the growth of the rates of $r^{(n;k)}$ for $k=2$ in Figure \ref{fig:rates_r}.

Observe the following structural similarity of $k=2,3,4,5$ cases to  $k=1$:
\begin{enumerate}
 \item 
  The eigenvalues for $k \geq 2$,
  which are monotone increasing with respect to large $n$ coincide with 
  the ones for $k=1$.
  
\item One can find structural similarity of the distributions of the eigenvalues. 
Another multiple eigenvalues appear  on every $k$ steps for large $n$
  as is the case for $k=1$.
 The order of appearance of  the another multiple eigenvalues coincide.
 More concretely, for $k=1$, 
 another eigenvalue $\cos \frac{p}{n-1}\pi$ appear at the $n$-stage as multiple eigenvalues, 
 and then they grow monotonically.
 For $k=2$, there corresponds to $\cos \frac{2p}{n}\pi$ with $n =2,4, 6, \dots$.
 For general $k$, the eigenvalues are of the form $\cos\frac{p \pi}{\lfloor\frac{n-2}{k}\rfloor+1}$, 
where $\lfloor \ \rfloor$ is the largest integer not greater than itself (Gauss symbol).

\item 
Below in the tables 4 and 5, some of the eigenvalues are the extra ones which do not appear 
for $k=1$ case. They are included in the sets $\frac{\pm 1}{k+1}, \frac{\pm 2}{k+1}, \dots$.

\item  The rates  of the multiple eigenvalues 
$r^{(n;k)} = 2^{-n} \{ \ \sum_{i,j} m^{(n;k)}_{i,j} + \sum_j m^{(n;k)}(\frac{\pm j}{k+1}) \  \} $ in Figure \ref{fig:rates_r}
 seems to grow to $1$ with respect to $n$.
\end{enumerate}


Based on these observations, we would like to propose
the followings:
\begin{conj}\label{conj1}
Let $j$ be a non-negative integer and let $\hat{\text{Sp}}(M_{k=j}^{(n)}) \subset \text{Sp}(M_{k=j}^{(n)}) $ be the set of multiple eigenvalues.
Then:
$$ 
\text{Sp}(M_{k=1}^{\left(\lfloor (n-2)/j \rfloor\right)})=
\{1\}\cup\left(\hat{\text{Sp}}(M_{k=j}^{(n)})  \cap \hat{\text{Sp}}(M_{k=j}^{(n+1)})\right)
\quad \mbox{for} \quad n \ge 3.
$$
\end{conj}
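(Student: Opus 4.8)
The plan is to reduce Conjecture~\ref{conj1} to an explicit computation of the spectrum and eigenvalue multiplicities of $M_{k=j}^{(n)}$, paralleling the treatment of $k=1$ in Theorem~\ref{thm71}, whose engine was the self-similar recursion for the bivariate determinant $\Phi_n(\lambda,\mu)$ resting on the identity $a_n a_n^* + b_n b_n^* = 2\,\mathrm{Id}$ of Lemma~\ref{lem63}. First I would establish the $k$-analogue of that identity: a direct block computation with the recursions for $a_0^{(n+1)},\dots,a_k^{(n+1)}$ gives $\sum_{i=0}^{k} a_i^{(n+1)}a_i^{(n+1)*} = \mathrm{diag}\!\left(\sum_i a_i^{(n)}a_i^{(n)*},\ \sum_i a_i^{(n)}a_i^{(n)*}\right)$, whence by induction $\sum_{i=0}^{k} a_i^{(n)}a_i^{(n)*} = (k+1)\,\mathrm{Id}_{2^n}$. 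This is the structural input that should let the characteristic polynomial close under the level-raising map.

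Second, I would introduce a multivariate determinant $\Phi_n^{(k)}(\lambda;\mu_1,\dots,\mu_r)$ whose auxiliary variables track the cross terms $a_i a_\ell^* + a_\ell a_i^*$ produced when one expands $\det\big((2k+2)M_{k}^{(n+1)}-\lambda\big)$ in $2\times2$ blocks, normalized so that $\Phi_n^{(k)}(\lambda;0,\dots,0)=\det\big((2k+2)M_{k}^{(n)}-\lambda\big)$. Using Step~1 together with the $\det\left(\begin{smallmatrix}A&B\\C&D\end{smallmatrix}\right)=\det(AD-CB)$ identity (valid because the relevant blocks commute, exactly as for $k=1$), I would derive a self-similar recursion $\Phi_{n+1}^{(k)} = P(\lambda,\mu)^{2^n}\,\Phi_n^{(k)}\big(T(\lambda,\mu)\big)$ for an explicit M\"obius-type map $T$ on the spectral parameters. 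The $k=1$ case is the prototype
$$\Phi_{n+1}(\lambda,\mu)=(\mu-\lambda)^{2^n}\,\Phi_n\!\left(\frac{2-\lambda^2+\mu^2}{\mu-\lambda},\ \frac{-2}{\mu-\lambda}\right),$$
and I expect the band structure of width $k$ in the index $i$ to force the analogous map $T$ to return to a normal form only after $j=k$ iterations in $n$; this is the anticipated origin of the floor $\lfloor(n-2)/k\rfloor$ in the statement.

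Third, solving the recursion as in \cite{grizuk} should yield $\mathrm{Sp}(M_{k=j}^{(n)})$ together with the multiplicity function $m^{(n;j)}(\lambda)$, from which I would extract three facts matching the listed observations~(1)--(3): (a) an eigenvalue $\cos(p\pi/q)$ with $\gcd(p,q)=1$ has multiplicity $\ge 2$ in $M_{k=j}^{(n)}$ precisely when $q\le \lfloor(n-2)/j\rfloor+1$, and once multiple it stays multiple as $n$ grows; (b) the only further multiple eigenvalues are the extra values in $\{\pm\ell/(k+1)\}$, and these are multiple only for $n$ in a fixed residue class, hence never multiple at two consecutive levels; (c) the eigenvalue $1$, present by double stochasticity (Proposition~\ref{prop61}), is simple. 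Granting these, the conjecture follows formally: the intersection $\hat{\mathrm{Sp}}(M_{k=j}^{(n)})\cap\hat{\mathrm{Sp}}(M_{k=j}^{(n+1)})$ retains exactly the $\cos(p\pi/q)$ with $q\le\lfloor(n-2)/j\rfloor+1$ (the floor of the smaller level survives even at a block boundary where the two levels carry different denominators, and the transient extras from (b) are deleted), so that adjoining $\{1\}$ reproduces $\mathrm{Sp}(M_{k=1}^{(\lfloor(n-2)/j\rfloor)})$ by Theorem~\ref{thm71}.

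The main obstacle is Step~2: for $k=1$ a single auxiliary variable closed the recursion, but for general $k$ the cross terms $a_i a_\ell^* + a_\ell a_i^*$ proliferate, and it is not clear a priori that a finite family of invariants closes under the level-raising map, nor that the induced dynamics is a tractable M\"obius-type map of period $k$. I would attack this first for $k=2$, where the auxiliary set is small and the numerics of Figure~\ref{fig:right} give a concrete target, and only then attempt the general band pattern. A secondary difficulty is part~(b): pinning down the residue-class behaviour of the extra eigenvalues $\pm\ell/(k+1)$ requires the full multiplicity formula rather than merely the eigenvalue set, and it is exactly this that makes the \emph{intersection} of two consecutive levels (rather than a single level) the correct object in the statement.
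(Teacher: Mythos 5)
The statement you are addressing is not proved in the paper at all: it appears as Conjecture \ref{conj1}, supported only by the numerical data of Tables \ref{table1}--\ref{table4} and the observed structural similarities for $k=2,3,4,5$. So there is no paper proof to compare against; the question is whether your proposal would close the conjecture, and it would not. Your Step 1 is correct and easily checked: the block recursions give $\sum_{i=0}^{k}a_i^{(n+1)}a_i^{(n+1)*}=\mathrm{diag}\bigl(\Sigma^{(n)},\Sigma^{(n)}\bigr)$ with $\Sigma^{(n)}=\sum_i a_i^{(n)}a_i^{(n)*}$, hence $\Sigma^{(n)}=(k+1)\,\mathrm{Id}_{2^n}$ by induction. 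But this identity alone does not reproduce what made the $k=1$ computation work. There, the unique cross term collapsed to a scalar, $a_{n+1}b_{n+1}^*+b_{n+1}a_{n+1}^*=\left(\begin{smallmatrix}0&2\\2&0\end{smallmatrix}\right)$, precisely because its blocks equal the \emph{full} sum $a_na_n^*+b_nb_n^*$; one auxiliary variable $\mu$ then closed the determinant recursion. For $k\ge2$ this fails: for instance, for $k=2$ one computes $a_0^{(n+1)}a_1^{(n+1)*}+a_1^{(n+1)}a_0^{(n+1)*}=\left(\begin{smallmatrix}a_1a_2^*+a_2a_1^*&a_0a_0^*\\ a_0a_0^*&0\end{smallmatrix}\right)$, which involves the \emph{individual} product $a_0a_0^*$ (not scalar --- only the sum over all $i$ is) together with a new cross term; iterating produces partial sums $a_0a_0^*+a_1a_1^*$, and so on. Nothing in your outline shows that a finite family of invariants closes under the level-raising map, nor that the induced parameter dynamics is a M\"obius-type map of period $k$; you flag this yourself as ``the main obstacle,'' and it is not a technical detail but the entire content of a would-be proof.

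The second gap is circularity in Step 3. The facts (a)--(c) you propose to ``extract'' --- that $\cos(p\pi/q)$ becomes multiple exactly when $q\le\lfloor(n-2)/j\rfloor+1$ and stays multiple thereafter, that the extra eigenvalues $\pm\ell/(k+1)$ are never multiple at two consecutive levels, and that $1$ is simple --- are precisely the empirical observations (1)--(3) that the paper lists before stating the conjecture; together with Theorem \ref{thm71} they are essentially equivalent to the conjecture. Granting them and then deriving the statement ``formally'' establishes nothing. Note also that even (c) is open for $k\ge2$: simplicity of the eigenvalue $1$ comes from ergodicity via Lemma \ref{lem:ergoticity and eigenvalue}, but Corollary \ref{cor:5.4} rests on the explicit spectral computation of Theorem \ref{thm71}, which exists only for $k=1$ and the lamplighter. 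In summary, your plan is a reasonable research program --- Step 1 is a sound first lemma, and attacking $k=2$ with the numerics as a target is the right instinct --- but everything beyond Step 1 is either open or assumes the conclusion, so the conjecture remains exactly that.
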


\begin{conj}\label{conj2}
 Let $j$ be a non-negative integer. There are $n_{\lambda,j} \in {\mathbb N}$ so that the following equalities hold:
\begin{eqnarray*}
 \lim_{n\to\infty}\mbox{Sp}\left(M_{k=1}^{(n)}\right) =
 \lim_{n\to\infty}\left\{\lambda\in\mbox{Sp}\left(M_{k=j}^{(n)}\right)\,\big|\,
0<
		   m^{(n_{\lambda,j};j)}(\lambda)  \le \cdots  \le m^{(n-1;j)}(\lambda)
		   \le m^{(n;j)}(\lambda) \right\}.
\end{eqnarray*}
\end{conj}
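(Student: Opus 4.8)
To prove Conjecture \ref{conj2} the foundational tool should be a generating determinant for arbitrary carrier capacity $j$, generalizing the polynomial $\Phi_n(\lambda,\mu)$ that drives the proof of Theorem \ref{thm71}. Recall that the $k=1$ computation worked because Lemma \ref{lem63} supplied the identity $a_na_n^{*}+b_nb_n^{*}=2\,\text{Id}$, which collapsed the block determinant of $M_{k=1}^{(n+1)}$ into the self-similar two-variable recursion $\Phi_{n+1}(\lambda,\mu)=(\mu-\lambda)^{2^{n}}\Phi_n(\cdots)$ hiding a scalar Möbius map. I would therefore introduce a multivariable polynomial $\Phi_n^{(j)}(\lambda,\mu_1,\dots,\mu_r)$ in which the auxiliary variables $\mu_i$ are coupled to the cross terms $a_ia_{i'}^{*}+a_{i'}a_i^{*}$ that appear when $\det\!\big(\sum_{i=0}^{j}(a_i^{(n+1)}+a_i^{(n+1)*})-\lambda\,\text{Id}\big)$ is expanded through the block recursions for the $a_i^{(n+1)}$. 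The aim of this step is a recursion writing $\Phi_{n+1}^{(j)}$ as a monomial prefactor times $\Phi_n^{(j)}$ evaluated at a rational self-map $T_j$ of $(\lambda,\mu_1,\dots,\mu_r)$, mirroring the scalar case.

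Granting such a recursion, the plan is to telescope it and extract the multiplicities $m^{(n;j)}(\lambda)$ as explicit functions of $n$, just as for $k=1$. The eigenvalues should then split into two families according to the dynamics of $T_j$. The \emph{elliptic} family comes from the periodic orbits of $T_j$: at a periodic point of period $q$ one picks up, once every $q$ levels, a positive geometric contribution of the same shape $2^{\,n-cq}$ as in Theorem \ref{thm71}, so that $m^{(n;j)}(\lambda)$ is eventually monotone non-decreasing in $n$ and grows like a fixed density times $2^{n}$. Matching the rotation number of $T_j$ to $p\pi/q$ should identify these eigenvalues with $\cos(p\pi/q)$ after the rescaling $n\mapsto\lfloor(n-2)/j\rfloor$ that is already visible in the observations preceding Conjecture \ref{conj1}. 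The \emph{transient} family consists of the extra values $\pm\ell/(j+1)$ of observation (3), arising from the non-elliptic part of the dynamics; their multiplicities remain bounded or oscillate and hence fail the monotonicity condition that defines the right-hand side of the conjecture.

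It then remains to take the limit and to fix $n_{\lambda,j}$. I would let $n_{\lambda,j}$ be the first level at which the periodic orbit attached to $\lambda=\cos(p\pi/q)$ closes up, prove monotonicity of $m^{(n;j)}(\lambda)$ beyond that level from the geometric structure of the telescoped product, and check that precisely the elliptic family survives the filter $0<m^{(n_{\lambda,j};j)}(\lambda)\le\cdots\le m^{(n;j)}(\lambda)$. Since this family converges as $n\to\infty$ to $\{\cos(p\pi/q)\}$, which by Theorem \ref{thm71} is $\lim_{n\to\infty}\text{Sp}(M_{k=1}^{(n)})$, the two limiting sets coincide. A complementary route is to derive the statement from Conjecture \ref{conj1} by sending $n\to\infty$: one would prove a lemma that ``$m^{(n;j)}(\lambda)$ eventually monotone non-decreasing'' is equivalent to ``$\lambda\in\hat{\text{Sp}}(M_{k=j}^{(n)})\cap\hat{\text{Sp}}(M_{k=j}^{(n+1)})$ for all large $n$'', whereupon the rescaled identity of Conjecture \ref{conj1} yields the result.

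The main obstacle is the construction and analysis of $T_j$ for $j\ge2$. There is no clean quadratic identity playing the role of Lemma \ref{lem63}; the products $a_i^{(n)}a_{i'}^{(n)*}$ do not close under a single scalar relation, so $T_j$ is a genuinely higher-dimensional rational dynamical system rather than a one-parameter Möbius map. Classifying its closed orbits, showing that the multiplicity is monotone exactly along the elliptic family, and upgrading the numerically observed coincidence into a rigorous equality of limiting sets are the delicate points; in particular, controlling the transient eigenvalues $\pm\ell/(j+1)$ tightly enough to prove they are genuinely excluded is where I expect the argument to be hardest.
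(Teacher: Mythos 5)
This statement is a \emph{conjecture} in the paper, not a theorem: the authors give no proof of it, only the numerical evidence of Tables 1--4 and the observed growth of the rates $r^{(n;k)}$, and they explicitly propose it ``based on these observations.'' So there is no paper argument to compare yours against --- and what you have written is likewise not a proof but a research program. Every load-bearing step is deferred: the multivariable polynomial $\Phi_n^{(j)}$ is never defined, the recursion with rational self-map $T_j$ is never derived, the splitting of eigenvalues into an ``elliptic'' and a ``transient'' family is hypothesized rather than established, and the eventual monotonicity of $m^{(n;j)}(\lambda)$ along the first family together with its failure along the second --- which is precisely the content of the conjecture --- is asserted, not proven. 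Your closing paragraph concedes all of this; the concession is accurate, and it means the proposal does not close the gap between the numerics and the statement.

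Two concrete corrections to the plan itself. First, your claim that ``there is no clean quadratic identity playing the role of Lemma \ref{lem63}'' is wrong about the diagonal part: the same block induction as in Lemma \ref{lem63} shows $\sum_{i=0}^{j} a_i^{(n)} a_i^{(n)*} = (j+1)\,\mathrm{Id}_{2^n}$ for every $j$, since the recursions for the $a_i^{(n+1)}$ place the products $a_i^{(n+1)}a_i^{(n+1)*}$ on the two diagonal corners in such a way that each corner receives the full previous-level sum. What actually breaks for $j\ge 2$ is the rest of the $k=1$ computation: there are $\binom{j+1}{2}$ cross terms $a_i a_{i'}^{*}+a_{i'}a_i^{*}$ rather than one, the auxiliary-variable bookkeeping has no evident closure on a finite set of variables, and the step $\det\begin{pmatrix} A & B \\ C & D\end{pmatrix}=\det(AD-CB)$ used in Theorem \ref{thm71} requires commuting blocks, which the $k\ge 2$ recursions do not supply. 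So the obstacle is not the absence of the quadratic identity but the collapse of the determinant recursion built on top of it. Second, your ``complementary route'' through Conjecture \ref{conj1} is not a route to a proof: Conjecture \ref{conj1} is equally unproven in the paper, so reducing one open conjecture to another (plus an unproven equivalence between eventual monotonicity of multiplicities and membership in $\hat{\text{Sp}}(M_{k=j}^{(n)})\cap\hat{\text{Sp}}(M_{k=j}^{(n+1)})$) leaves the statement exactly where it started.
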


\begin{table}[htb]
\begin{center}
\caption{Multiplicities of non-negative eigenvalues for $k=1$}
\label{table1}
\begin{tabular}{|c|cccccccccccc|}
\hline
 $n$ & $m_{1,2}^{(n;1)}$&$m_{1,3}^{(n;1)}$&$m_{1,4}^{(n;1)}$&$m_{1,5}^{(n;1)}$&$m_{2,5}^{(n;1)}$&$m_{1,6}^{(n;1)}$&$m_{1,7}^{(n;1)}$&$m_{2,7}^{(n;1)}$&$m_{3,7}^{(n;1)}$&$m_{1,8}^{(n;1)}$&$\cdots $& $m_{5,11}^{(n;1)}$\\
\hline
 1 & 1 &  0& 0& 0& 0& 0& 0& 0& 0& 0&  &0\\
 2 & 1 &  1& 0& 0& 0& 0& 0& 0& 0& 0&  &0\\
 3 & 3 &  1& 1& 0& 0& 0& 0& 0& 0& 0&  &0\\
 4 & 5 &  2& 1& 1& 1& 0& 0& 0& 0& 0&  &0\\
 5 & 11&  5& 2& 1& 1& 1& 0& 0& 0& 0&  &0\\
 6 & 21&  9& 4& 2& 2& 1& 1& 1& 1& 0&  &0\\
 7 & 43& 18& 9& 4& 4& 2& 1& 1& 1& 1&  &0\\
 8 & 85& 37&17& 8& 8& 4& 2& 2& 2& 1&  &0\\
 9 &171& 73&34&17&17& 8& 4& 4& 4& 2&  &0\\
10 &341&146&68&33&33&16& 8& 8& 8& 4&  &1\\
\hline
\end{tabular}
\end{center}
\end{table}

\begin{table}[htbp]
\begin{center}
\caption{Multiplicities of non-negative multiple eigenvalues for $k=2$}
\label{table2}
 \begin{tabular}{|c|ccccccccc|}
\hline
 $n$ & $m_{1,2}^{(n;2)}$&$m_{1,3}^{(n;2)}$&$ m_{1,4}^{(n;2)}$&$m_{1,5}^{(n;2)}$&$m_{2,5}^{(n;2)}$&$m_{1,6}^{(n;2)}$&$m_{1,7}^{(n;2)}$&$m_{2,7}^{(n;2)}$&$m_{3,7}^{(n;2)}$ \\
\hline
 1&  0  &   0& 0 & 0&0&0&0&0&0\\
 2&  1  &   0& 0 & 0&0&0&0&0&0\\
 3&  0  &   0& 0 & 0&0&0&0&0&0\\
 4&  4  &   0& 0 & 0&0&0&0&0&0\\
 5&  6  &   0& 0 & 0&0&0&0&0&0\\
 6&   22&   3& 0 & 0&0&0&0&0&0\\
 7&   42&   6& 0 & 0&0&0&0&0&0\\
 8&  104&  21& 3 & 0 &0&0&0&0&0\\
 9&  210&  50& 6 &0&0&0&0&0&0\\
10&  460& 118& 24&  3&  3&0&0&0&0\\
11&  930& 252& 54&  6&  6&0 &0&0&0\\
12& 1940& 551&144& 25& 25& 3&0&0&0\\
13& 3906&1134&306& 60& 60& 6&0&0&0\\
14& 7966&2359&692&165&165&28& 3 &3 &3\\
15&16002&4788&1434&366&366&66& 6 &6 &6\\
\hline
\end{tabular}
\end{center}
\end{table}

\begin{table}[htbp]
\begin{center}
\caption{Multiplicities of non-negative multiple eigenvalues for $k=3$}
\label{table3}
\begin{tabular}{|c|cccccc|}
\hline
 $n $&$ m^{(n;3)}(\frac{1}{4})$&$m_{1,2}^{(n;3)}$&$m_{1,3}^{(n;3)}$&$m_{1,4}^{(n;3)}$&$m_{1,5}^{(n;3)}$&$m_{2,5}^{(n;3)}$ \\
\hline
 1 &0&   0&   0&   0&   0&   0\\
 2 &0&   0&   1&   0&   0&   0\\
 3 &0&   1&   0&   0&   0&   0\\
 4 &2&   0&   0&   0&   0&   0\\
 5 &0&   4&   0&   0&   0&   0\\
 6 &0&   7&   0&   0&   0&   0\\
 7 &0&  26&   0&   0&   0&   0\\
 8 &0&  56&   2&   0&   0&   0\\
 9 &0& 151&   7&   0&   0&   0\\
10 &0& 332&  26&   0&   0&   0\\
11 &0& 776&  68&   2&   0&   0\\
12 &0&1653& 196&   7&   0&   0\\
13 &0&3640& 464&  30&   0&   0\\
14 &0&7604&1152&  80& 2 &2\\
15 &0&16157&2570&256& 7 &7\\
\hline
\end{tabular}
\end{center}
\end{table}

\begin{table}[htbp]
\begin{center}
\caption{Multiplicities of non-negative multiple eigenvalues for $k=4$ and $5$}
\label{table4}
\begin{tabular}{|c|cccc|}
\hline
 \multicolumn{5}{|l|}{$k=4$} \\
\hline
 $n $&$ m^{(n;4)}(\frac{1}{5})$&$m_{1,2}^{(n;4)}$&$m_{1,3}^{(n;4)}$&$m_{1,4}^{(n;4)}$ \\
\hline
 1 &0&   0& 0&   0\\
 2 &0&   0& 0&   0\\
 3 &0&   0& 1&   0\\
 4 &0&   1& 0&   0\\
 5 &2&   0& 0&   0\\
 6 &0&   3& 0&   0\\
 7 &1&   6& 0&   0\\
 8 &0&  29& 0&   0\\
 9 &3&  62& 0&   0\\
10 &0& 185& 2&   0\\
11 &5& 418& 6&   0\\
12 &0&1061&31&   0\\
13 &9&2332&80&   0\\
14 &0&5427&265&  2\\
15 &15&11704&652&6\\
\hline
\end{tabular}
\begin{tabular}{|c|cccc|}
\hline
 \multicolumn{5}{|l|}{$k=5$} \\
\hline
 $n $&$ m^{(n;5)}(\frac{1}{6}) $&$ m^{(n;5)}(\frac{2}{6})$&$ m_{1,2}^{(n;5)}$&$m_{1,3}^{(n;5)}$ \\
\hline
 1 &0&0&   0& 0\\
 2 &0&0&   0& 0\\
 3 &0&0&   0& 0\\
 4 &0&0&   0& 1\\
 5 &0&1&   1& 0\\
 6 &1&0&   0& 0\\
 7 &0&1&   4& 0\\
 8 &0&0&   6& 0\\
 9 &0&3&  33& 0\\
10 &0&0&  69& 0\\
11 &0&5& 220& 0\\
12 &0&0& 500& 2\\
13 &0&9&1333& 6\\
14 &2&0&3002&34\\
15 &0&15&7327&93\\
\hline
\end{tabular}
\end{center}
\end{table}

\begin{figure}
    \begin{center}
     \includegraphics[height=7cm]{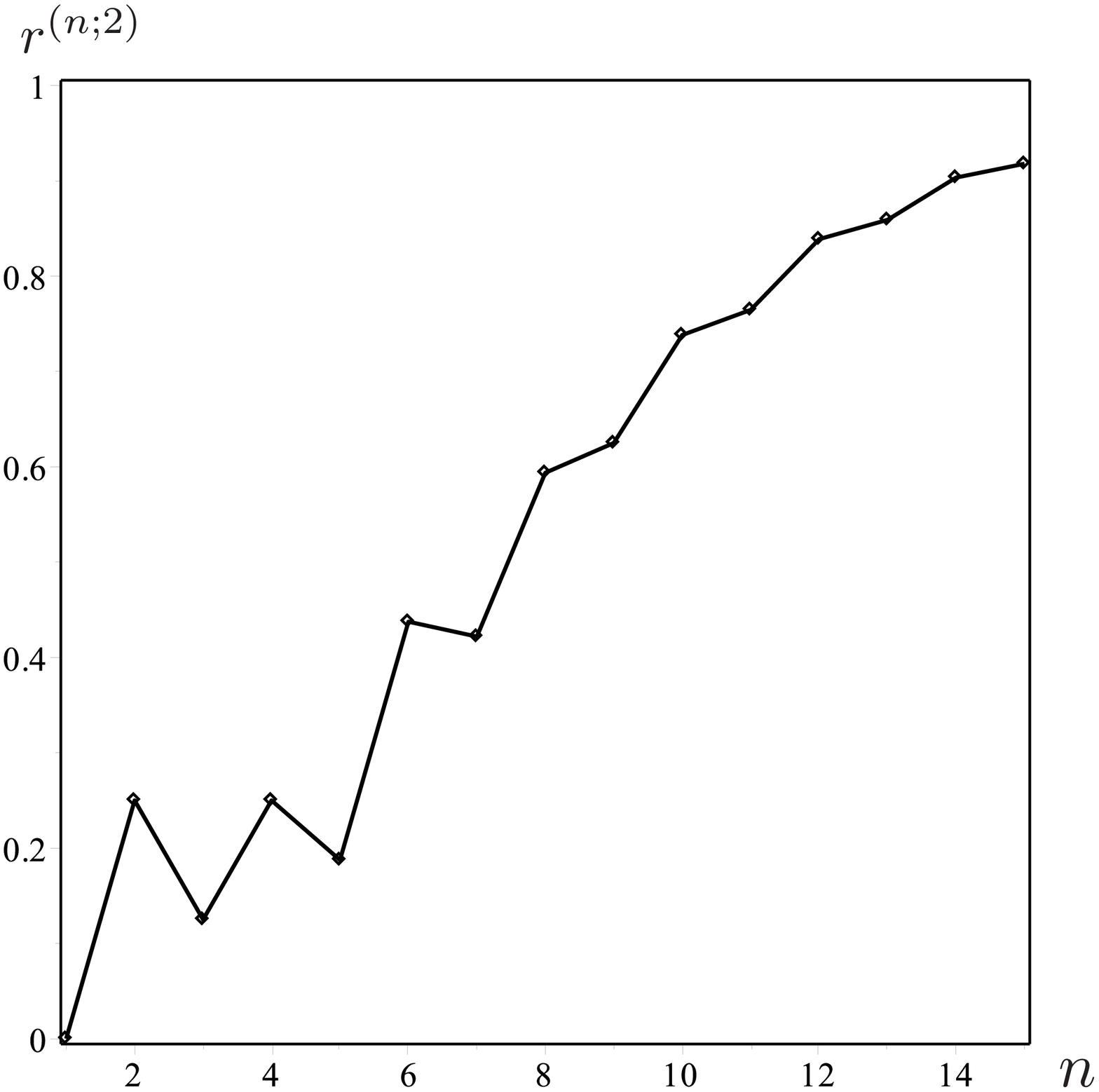}
    \end{center}
    \caption{Rates of the multiple eigenvalues $r^{(n;k)}$ for $k=2$}
    \label{fig:rates_r}
\end{figure}

So far we have found some  similarity of spectral distributions for various $k \geq 1$.
It is quite unexpected  for us to find any kind of structural similarity
among BBS$_k$ and the lamplighter automaton, since BBS$_{k=1}$ is dynamically
 translation invariant, while BBS$_{k \geq 2}$ behave  essentially nonlinear.
It would be reasonable to expect to see more concrete dynamical similarity for several $k \geq 1$.
On the other hand extra appearance of new eigenvalues are observed for $k \geq 2$,
which might lead to see essential difference of dynamics among BBS$_k$
(see $(3)$ above).
Combination  with these opposite phenomena will lead us with much deeper understanding
 of BBS.

\section{Ergodicity of the transition operators for BBS translation}

\subsection{Ergodicity on the boundary of the binary tree}
Let $\{M_1^{(n)}\}_{n =1,2, \dots}$ be the family of transition operators 
for lamplighter or BBS$_{k=1}$ automata.
We have verified that those are stochastic $2^n$ by $2^n$ matrices 
equipped with the canonical maps:
$$\dots \to M_1^{(n+1)} \to M_1^{(n)} \to \dots$$

\begin{df}
Let $M$ be a stochastic  $k$ by $k$ matrix. 
$M$ is ergodic, if there is $s_0 \geq 1$ and $\alpha >0$ 
so that inequalities:
$$m_{i,j}^{(s_0)} \geq \alpha$$
hold for all $i,j$, where 
 $M^{s} =(m_{i,j}^{(s)})_{1 \leq i,j \leq k}$.
 \end{df}
For stochastic matrix, if the above property is satisfied for some $s_0$, 
then the same property holds for all $s \geq s_0$.

Recall the fundamental result on ergodicity:
 \begin{thm}\label{thm:unique probability distribution}
Let $M$ be a stochastic $k$ by $k$ matrix, and consider the 
associated transition chain on the space  $X= \{1, \dots, k\}$. 
If $M$ is ergodic, then there is a unique probability distribution $\pi$ on $X$ 
which satisfies two   properties
{\normalfont (1)} $\pi M =\pi$, and {\normalfont (2)} $\lim_{s \to \infty} m_{i,j}^{(s)} = \pi_j$.
\end{thm}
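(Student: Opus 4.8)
The plan is to prove this as the standard convergence theorem for primitive Markov chains by a contraction argument on the simplex of probability distributions. The whole argument is powered by one reading of the hypothesis: ergodicity says exactly that $P := M^{s_0}$ is a \emph{strictly positive} stochastic matrix, every entry being at least $\alpha$. As a preliminary I would record that any stochastic matrix acts as a non-expansion on row vectors in the $\ell^1$ norm: for $d = \mu - \nu$ one has $\|dM\|_1 = \sum_j |\sum_i d_i m_{ij}| \le \sum_i |d_i| \sum_j m_{ij} = \|d\|_1$, using only that each row of $M$ sums to $1$. In particular the map $\mu \mapsto \mu M$ never increases $\ell^1$ distance between distributions.

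The heart of the proof is to show that the positive matrix $P$ is a \emph{strict} contraction. I would use the Dobrushin coefficient $\delta(P) = 1 - \min_{i,i'} \sum_j \min(p_{ij}, p_{i'j})$, for which the standard estimate $\|\mu P - \nu P\|_1 \le \delta(P)\,\|\mu - \nu\|_1$ holds for all distributions $\mu,\nu$. Since every entry of $P$ satisfies $p_{ij} \ge \alpha$, we have $\sum_j \min(p_{ij}, p_{i'j}) \ge k\alpha$ for each pair $i,i'$, hence $\delta(P) \le 1 - k\alpha < 1$ (note $k\alpha \le 1$ automatically, as each row of $P$ has $k$ entries summing to $1$). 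Thus $\|\mu P - \nu P\|_1 \le (1-k\alpha)\|\mu - \nu\|_1$. Establishing this Dobrushin bound — equivalently, isolating the common mass $\sum_j \min(p_{ij},p_{i'j})$ that survives under $P$ — is the one step that requires genuine care; it is also what underlies the coupling interpretation, in which two chains coalesce with probability at least $k\alpha$ over each block of $s_0$ steps.

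The remaining conclusions are then soft. The probability simplex in $\R^k$ is complete and $\mu \mapsto \mu P$ is a contraction of it, so by the Banach fixed point theorem there is a unique $\pi$ with $\pi P = \pi$, and $\mu P^m \to \pi$ geometrically for every starting $\mu$. To upgrade this to $\pi M = \pi$, I would observe that $\pi M$ is again a probability distribution and satisfies $(\pi M)P = \pi M^{s_0} M = \pi M$, so it is a fixed point of $P$ and hence equals $\pi$ by uniqueness; the same reasoning shows any $\pi'$ with $\pi' M = \pi'$ is fixed by $P$ and therefore equals $\pi$, giving property (1) together with its uniqueness. For property (2) I would apply the convergence to the point mass $\mu = e_i$, noting that $e_i M^s$ is precisely the $i$-th row $(m_{i,1}^{(s)}, \dots, m_{i,k}^{(s)})$ of $M^s$. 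Along $s = m s_0$ this row tends to $\pi$; and since $\|e_i M^{s+1} - \pi\|_1 = \|(e_i M^s - \pi)M\|_1 \le \|e_i M^s - \pi\|_1$ by $\pi M = \pi$ and non-expansiveness, the distances are monotone non-increasing in $s$, so the full sequence converges and $m_{i,j}^{(s)} \to \pi_j$ for every $i,j$.
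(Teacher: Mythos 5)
Your proof is correct, but there is essentially nothing in the paper to compare it against: the paper introduces this statement with ``Recall the fundamental result on ergodicity'' and gives \emph{no proof}, treating it as a classical fact about finite ergodic Markov chains (in the spirit of the cited book of Sinai). So where the paper defers to the literature, you supply a self-contained argument, and it is the standard contraction proof. Your chain of deductions is sound: $\ell^1$ non-expansiveness of any stochastic matrix, strict contraction of $P=M^{s_0}$ on the simplex, Banach fixed point, the commutation trick $(\pi M)P=(\pi P)M=\pi M$ to pass between $P$-invariance and $M$-invariance (giving both existence and uniqueness for property (1)), and monotonicity of $s\mapsto\|e_iM^s-\pi\|_1$ to upgrade convergence along multiples of $s_0$ to the full sequence (property (2)). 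The one ingredient you quote rather than prove is the Dobrushin estimate $\|\mu P-\nu P\|_1\le\delta(P)\|\mu-\nu\|_1$, which you correctly flag as the crux; if you want to close that gap without invoking the general coefficient, note that entrywise $P\ge\alpha$ lets you write $P=k\alpha\,U+(1-k\alpha)\,Q$, where $U$ is the matrix with all entries $1/k$ and $Q=(P-k\alpha U)/(1-k\alpha)$ is again stochastic (the case $k\alpha=1$, i.e.\ $P=U$, being trivial). Since $\mu U=\nu U$ for any two probability row vectors, one gets
\begin{equation*}
\mu P-\nu P=(1-k\alpha)\,(\mu-\nu)Q, \qquad \|\mu P-\nu P\|_1\le(1-k\alpha)\|\mu-\nu\|_1,
\end{equation*}
which is exactly the contraction bound you need, obtained from non-expansiveness alone. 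With that substitution your proposal is a complete proof of the theorem the paper only recalls.
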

The unique probability distribution $\pi = (\pi_1, \dots, \pi_k)$ is
called the stationary distribution with respect to $M$.

\begin{lem}\label{lem:ergoticity and eigenvalue}
$M$ is ergodic, if and only if the spectrum of $M$ satisfies 

{\normalfont (1)} the multiplicity of the eigenvalue $1$ is just $1$, and 

{\normalfont (2)} it does not contain $-1$.
\end{lem}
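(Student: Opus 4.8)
The plan is to recognize this as a Perron--Frobenius statement and to exploit that the transition operators of interest are symmetric, so that the spectrum is real and contained in $[-1,1]$. First I would record the standing structural facts: a symmetric stochastic matrix is automatically doubly stochastic (since $M^{*}=M$ forces the column sums to equal the row sums, all of which are $1$), the all-ones vector $\mathbf 1$ is an eigenvector for the eigenvalue $1$, the spectral radius equals $1$, and $M$ is orthogonally diagonalizable with real eigenvalues lying in $[-1,1]$. The ergodicity hypothesis says precisely that some power $M^{s_0}$ is entrywise positive, i.e. that $M$ is primitive.

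For the forward implication I would apply Perron--Frobenius to the positive matrix $M^{s_0}$: its Perron eigenvalue is $1$, it is simple, and every other eigenvalue has modulus strictly less than $1$. Since the eigenvalues of $M^{s_0}$ are the $s_0$-th powers of those of $M$ with multiplicities adding, a double eigenvalue $1$ of $M$ would force a non-simple eigenvalue $1$ for $M^{s_0}$, while an eigenvalue $-1$ of $M$ would produce an eigenvalue $(-1)^{s_0}\in\{\pm 1\}$ of modulus $1$ for $M^{s_0}$ distinct from the Perron data. Both contradict the conclusion of Perron--Frobenius, yielding conditions (1) and (2).

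For the converse I would use the spectral decomposition $M=\sum_i \lambda_i P_i$ into mutually orthogonal projections. Conditions (1) and (2), together with reality of the spectrum, say that $1$ is simple and that every other eigenvalue satisfies $|\lambda_i|<1$, because the only real numbers of modulus $1$ are $\pm 1$. Hence $M^{s}=\sum_i \lambda_i^{s} P_i \to P_1$ as $s\to\infty$, where $P_1$ is the rank-one orthogonal projection onto $\mathbf 1$; by double stochasticity $P_1=\tfrac1k J$, the matrix all of whose entries equal $1/k$. Thus the entries of $M^{s}$ converge to $1/k>0$, so $M^{s_0}$ is entrywise positive for all large $s_0$ and $M$ is ergodic.

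The subtle point, which I would flag explicitly, is that this clean two-condition criterion is genuinely a statement about matrices with \emph{real} spectrum. For a general stochastic matrix the converse fails: the $3$-cycle permutation matrix has spectrum $\{1,\omega,\omega^{2}\}$ with $\omega=e^{2\pi i/3}$, so $1$ is simple and $-1\notin\mathrm{Sp}(M)$, yet it is periodic and not primitive, since a complex eigenvalue of modulus $1$ other than $1$ can occur. The symmetry of the transition operators $M_k^{(n)}$ established in Proposition~\ref{prop61} is exactly what excludes this, forcing the spectrum into $[-1,1]$ so that $1$ and $-1$ are the only possible eigenvalues on the unit circle. The main work is therefore to make this reduction precise and to invoke Perron--Frobenius for the forward direction; the converse is then a short spectral-gap convergence argument.
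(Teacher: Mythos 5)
Your proof is correct; the converse half coincides in substance with the paper's, while your forward half takes a genuinely different route. For ergodicity $\Rightarrow$ (1),(2) the paper never invokes Perron--Frobenius on the positive power $M^{s_0}$: it instead uses the Markov convergence theorem it records as Theorem \ref{thm:unique probability distribution}, arguing that two orthogonal $1$-eigenvectors would both be nonzero multiples of the stationary distribution $\pi$ (whence the contradiction $\pi=0$, impossible since $\pi_i\geq\alpha>0$), and that a $(-1)$-eigenvector $w$ would make $wM^s$ oscillate between $\pm w$ while that theorem forces convergence. Your primitivity argument buys independence from any symmetry hypothesis in this direction and rests on a single classical citation; the paper's version stays inside Markov-chain language but leans on Theorem \ref{thm:unique probability distribution}, which it quotes without proof. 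In the converse direction both proofs use the orthogonal spectral decomposition and the gap $|\lambda_i|<1$ for $i\geq 2$; the difference is that you conclude directly that $M^s$ converges entrywise to the rank-one projection $\frac{1}{k}J$ ($J$ the all-ones matrix), so all entries are eventually positive, whereas the paper argues by contradiction with a pigeonhole step: a zero entry of $M^s$ for every $s$ gives a fixed pair $i,j$ with $\langle\delta_i M^s,\delta_j\rangle=0$ for infinitely many $s$, hence in the limit, forcing $\delta_i$ or $\delta_j$ to be orthogonal to the all-ones vector. Your version is shorter and quantitative. Your flag about real spectra is also a genuine observation about the statement itself: for a non-symmetric stochastic matrix the ``if'' direction fails (your $3$-cycle example), and the paper's proof silently assumes symmetry when it takes ``orthogonal eigenvectors'' and writes $-1<\lambda_i<1$; this is harmless here because the lemma is only applied to the symmetric operators of Proposition \ref{prop61} and Corollary \ref{cor:5.4}, but the caveat deserves to be recorded. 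One wording repair in your forward direction: when $s_0$ is even, $(-1)^{s_0}=1$ is not ``distinct from the Perron data''; it inflates the multiplicity of the eigenvalue $1$ of $M^{s_0}$, so the contradiction in that case is with simplicity rather than with strict dominance --- the argument is unaffected.
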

\begin{proof}
Suppose $M$ is ergodic.
Let $v_1$ and $v_2$ be two orthogonal eigenvectors with eigenvalue $1$.
Then $v_i M =v_i$ hold, and so:
$$\langle v_1 M^{2s}, v_2\rangle = \langle v_1 M^{s} , v_2 M^{s} \rangle =   \langle v_1,v_2\rangle=0$$
must hold. 
Let $a_i  $ be the sum of coordinates of $v_i$.
Then $a_i$ can not be zero, since 
 $v_i = \lim_{s \to \infty} v_i M^{s} =a_i \pi$ hold by Theorem \ref{thm:unique probability distribution}.
By letting $s \to \infty$ in the above equalities, 
it follows $\pi =0$ is zero vector, which is a contradiction,
since $\pi_i \geq \alpha >0$.
So the multiplicity of the eigenvalue $1$ must be less than or equal to $1$.
It is at least $1$ because constant vectors have eigenvalue $1$.

As we noticed  that the limit exists:
$$wM^s \equiv (x_1, \dots,x_n)M^{s} \to (a \pi_1, \dots, a \pi_n)$$
by Theorem \ref{thm:unique probability distribution}, 
where $a = \sum_{i=1}^n x_i$. 
But if  $w$ is an  eigenvector with eigenvalue $-1$, then
$w M^{s} \in \{ w, -w\}$ oscillates, which is a contradiction.

Suppose the above two properties hold.
Let $\{ v_1, \dots, v_k\} $ be the orthogonal eigenvectors such that $v_1$ corresponds to the  eigenvalue $1$.
Then for any $v =\sum_{i=1}^k a_i v_i$,
$$\lim_{s \to \infty} v M^{s} = a_1 v_1 +  \lim_{s\to \infty}  \sum_{i=2}^k \lambda_i^{s} a_i v_i =a_1v_1$$
hold, since $-1 < \lambda_i <1$ hold for $i \geq 2$.

Suppose $M$ is not ergodic, i.e. for every $s$, there exist $i,j$ such that 
$m_{i,j}^{(s)} = 0$  hold. Let $\delta_i=(0, \dots, 0,1,0, \dots)$.
Then $\langle\delta_i M^{s} , \delta_j\rangle=m_{i,j}^{(s)}=0$ holds.
It follows that there exist $i,j$ such that $ \langle \delta_i M^{l} , \delta_j \rangle=0$ hold for infinitely many $l$.
So it also holds for $l \to \infty$.
It follows that  $\delta_i$ or $\delta_j$ is orthogonal to $v_1$.
Since $M$ is stochastic,  we can put $v_1= (1, \dots,1)$, and so this is a contradiction.
This completes the proof.
\end{proof}
\begin{rem}
For the stochastic matrix, 
the property {\normalfont (1)} is equivalent to connectivity,
 and property {\normalfont (2)} is to non bi-partiteness of the associated graph.
\end{rem}

 \begin{cor}\label{cor:5.4}
 Let $M_L^{(n)}$ and $M_B^{(n)}$
 be the transition operators for  the lamplighter and  BBS$_{k=1}$ automata, respectively.
 Then they are all ergodic.
 \end{cor}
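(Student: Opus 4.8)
The plan is to verify the spectral criterion of Lemma \ref{lem:ergoticity and eigenvalue}: a symmetric stochastic matrix is ergodic precisely when the eigenvalue $1$ is simple and $-1$ does not belong to its spectrum. Both $M_B^{(n)}$ and $M_L^{(n)}$ are symmetric and doubly stochastic by Proposition \ref{prop61}, so the constant vector is a $1$-eigenvector and the entire spectrum lies in $[-1,1]$. Moreover, since the two conditions in Lemma \ref{lem:ergoticity and eigenvalue} depend only on the spectrum, and $\mathrm{Sp}(M_L^{(n)})=\mathrm{Sp}(M_B^{(n)})$ with multiplicities by Theorem \ref{thm71}, it suffices to establish them for $M_B^{(n)}$ and transport the conclusion to $M_L^{(n)}$.

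For the absence of $-1$ I would read the explicit spectrum in Theorem \ref{thm71} directly: every nontrivial eigenvalue has the form $\cos(p\pi/q)$ with $1\le p<q$, and $\cos(p\pi/q)=-1$ would force $p/q=1$, i.e. $p=q$, contradicting $p<q$; the adjoined eigenvalue $1$ is of course not $-1$. In the language of the graph-theoretic reformulation noted after Lemma \ref{lem:ergoticity and eigenvalue}, this is the statement that the associated graph is non-bipartite, and one sees it immediately from the presence of a loop: the all-zeros word is fixed by $a_0$, since from $a_0^{(n+1)}=\left(\begin{smallmatrix} a_0^{(n)} & a_1^{(n)}\\ 0 & 0\end{smallmatrix}\right)$ the diagonal entry of $a_0^{(n)}$ at the all-zeros index equals $1$, so $M_B^{(n)}$ has a positive diagonal entry.

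The \emph{crux} is the simplicity of the eigenvalue $1$. Theorem \ref{thm71} records the multiplicities of the eigenvalues $\cos(p\pi/q)$ but lists $1$ only as a set element, so its multiplicity is not supplied by the formula and must be obtained separately. By the remark following Lemma \ref{lem:ergoticity and eigenvalue} this is equivalent to connectivity (irreducibility) of the transition graph. For BBS$_{k=1}$ the maps $a_0,a_1$ act on $\{0,1\}^n$ by prepending a bit and deleting the last one, i.e. as the two branches of the one-sided shift, while $a_0^{*},a_1^{*}$ implement the inverse moves; the symmetrized transition graph is therefore the de Bruijn graph on length-$n$ words, which is strongly connected. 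Hence $1$ is simple for $M_B^{(n)}$, and by the isospectrality of Theorem \ref{thm71} it is simple for $M_L^{(n)}$ as well (alternatively, for the lamplighter one may invoke level-transitivity of its action on the binary tree).

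With both spectral conditions in hand, Lemma \ref{lem:ergoticity and eigenvalue} yields ergodicity of $M_B^{(n)}$ and $M_L^{(n)}$ for every $n$. I expect the main obstacle to be condition (1): the exclusion of $-1$ is immediate from the explicit spectrum, whereas pinning down that the multiplicity of $1$ is exactly one requires the connectivity/irreducibility input that the spectral formula does not by itself provide.
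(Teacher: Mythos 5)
Your proposal is correct and follows the same route as the paper: the paper's proof of Corollary \ref{cor:5.4} is a single sentence invoking the spectral computation of Theorem \ref{thm71} together with the criterion of Lemma \ref{lem:ergoticity and eigenvalue}. Where you differ is in how the crux --- simplicity of the eigenvalue $1$ --- is discharged. The paper leaves this implicit: Theorem \ref{thm71} as stated lists $1$ only as a member of the spectrum and supplies multiplicity formulas only for the eigenvalues $\cos(p\pi/q)$, so simplicity of $1$ is really inherited from the full eigenvalue computation of Grigorchuk--Zuk to which the recursion $\Phi_{n+1}(\lambda,\mu)=(\mu-\lambda)^{2^n}\Phi_n(\,\cdot\,,\cdot\,)$ reduces the problem (there the multiplicity of $1$ is exactly $1$; equivalently, the multiplicities of the $\cos(p\pi/q)$ eigenvalues sum to $2^n-1$). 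You correctly identified that gap and filled it with an independent, self-contained argument: the symmetrized transition graph of $M_B^{(n)}$ is the de Bruijn graph on $\{0,1\}^n$ (prepend a bit, drop the last bit), which is connected since any target word is reached by $n$ prepend moves, so Perron--Frobenius gives simplicity of $1$; isospectrality (or level-transitivity of the lamplighter action on the tree) then transports the conclusion to $M_L^{(n)}$. Your treatment of the exclusion of $-1$ --- reading $\cos(p\pi/q)>-1$ for $1\le p<q$ directly off the spectrum, with the positive diagonal entry as a graph-theoretic alternative --- is exactly what the paper's citation of Theorem \ref{thm71} amounts to. The net effect is that your argument is more robust (it does not rely on the multiplicity bookkeeping of the spectral formula being exhaustive) at the cost of one extra combinatorial observation the paper never needs to articulate.
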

 \begin{proof}
 The result follows from our computation of their spectra in Theorem $7.1$ with Lemma \ref{lem:ergoticity and eigenvalue}.
 \end{proof}

\subsection{On automorphisms of the  tree}
Let $T$ be the binary tree, and $T_n$ be $n$-th level set.
Then the transition operators satisfy:
$$M^{(n+1)}|_{T_n} =M^{(n)}.$$

Let us consider the canonical maps:
$$\dots \to M^{(n+1)} \to M^{(n)} \to \dots$$
and take the projective limit:
$$M \equiv \lim_{\leftarrow n} M^{(n)}.  $$
 $M$ gives an ergodic transition chain on $\partial T$, if $M^{(n)}$ are ergodic.

 \begin{prop}\label{lem8.5}
 There are no automorphisms of $T$ which conjugate 
 between $M_{L}^{(n)} $ and $ M_{B}^{(n)} $.
 \end{prop}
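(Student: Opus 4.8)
The plan is to exploit the rigidity of tree automorphisms. Any $g\in\mathrm{Aut}(T)$ restricts on each level set $T_n$ to a permutation of $I_n=\{0,\ldots,2^n-1\}$ whose induced permutation matrix $\sigma_n$ cannot be arbitrary: it must respect the recursive block structure of the tree. Splitting $I_n$ into the two halves $I_n^L=\{0,\ldots,2^{n-1}-1\}$ and $I_n^R=\{2^{n-1},\ldots,2^n-1\}$ attached to the two maximal subtrees below the root, $\sigma_n$ carries the unordered pair $\{I_n^L,I_n^R\}$ to itself (it either preserves or interchanges the two halves and then acts recursively inside them). Hence, to rule out a conjugating automorphism it suffices to exhibit a conjugacy invariant that is computed from this coarse partition and that takes different values on $M_L^{(n)}$ and $M_B^{(n)}$. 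This is the sharp point: by Theorem~\ref{thm:BBS to LL} some permutation matrix does conjugate the two operators, so the content here is precisely that no such matrix can come from $\mathrm{Aut}(T)$.

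First I would record the invariant. For a symmetric matrix $M$ indexed by $I_n$, associate to it the unordered pair of multisets $\{D^L(M),D^R(M)\}$, where $D^L(M)=\{M_{ii}:i\in I_n^L\}$ and $D^R(M)=\{M_{ii}:i\in I_n^R\}$ collect the diagonal entries in each half. Since $(\sigma_n^*M\sigma_n)_{ii}=M_{\sigma_n(i)\sigma_n(i)}$ and $\sigma_n$ maps $I_n^L,I_n^R$ onto $I_n^L,I_n^R$ (up to interchange), conjugation by a tree-automorphism permutation matrix leaves the unordered pair $\{D^L,D^R\}$ unchanged. Consequently, if $\sigma_n^*M_B^{(n)}\sigma_n=M_L^{(n)}$ for some $\sigma_n$ arising from $\mathrm{Aut}(T)$, then $M_B^{(n)}$ and $M_L^{(n)}$ must share this diagonal-block profile.

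Next I would carry out the computation at level $n=2$, which already produces the contradiction. From the defining recursions one obtains
\[
4M_L^{(2)} = \begin{pmatrix} 0 & 2 & 1 & 1 \\ 2 & 0 & 1 & 1 \\ 1 & 1 & 2 & 0 \\ 1 & 1 & 0 & 2 \end{pmatrix}, \qquad
4M_B^{(2)} = \begin{pmatrix} 2 & 1 & 1 & 0 \\ 1 & 0 & 2 & 1 \\ 1 & 2 & 0 & 1 \\ 0 & 1 & 1 & 2 \end{pmatrix}.
\]
Reading off the diagonals (in the normalization by $4$), $M_L^{(2)}$ has diagonal $(0,0,2,2)$, so with $I_2^L=\{0,1\}$ and $I_2^R=\{2,3\}$ its profile is $\{D^L,D^R\}=\{\{0,0\},\{2,2\}\}$, whereas $M_B^{(2)}$ has diagonal $(2,0,0,2)$ and profile $\{\{0,2\},\{0,2\}\}$. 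These unordered pairs are distinct, so no tree-automorphism permutation can conjugate $M_B^{(2)}$ into $M_L^{(2)}$.

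Finally, since the restriction to $T_2$ of any automorphism of $T$ is precisely such a permutation $\sigma_2$, no automorphism of $T$ can simultaneously conjugate $M_B$ to $M_L$ at every level, which proves the proposition. The only genuinely delicate part is the structural bookkeeping---verifying that a tree automorphism preserves the left/right partition at each level and that the diagonal-block profile is therefore invariant; once this is in place the matrix computations are routine and, pleasingly, the obstruction is already visible at the second level.
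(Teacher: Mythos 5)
Your core argument is the same as the paper's: reduce to the diagonal at level $2$, use the fact that any tree automorphism either preserves or swaps the two halves $\{0,1\}$ and $\{2,3\}$, and observe that the lamplighter diagonal $(0,0,2,2)$ can therefore only be carried to itself or to $(2,2,0,0)$, never to the BBS diagonal $(2,0,0,2)$. Your matrices $4M_L^{(2)}$ and $4M_B^{(2)}$ are computed correctly, and your unordered pair of half-diagonal multisets is a clean formalization of exactly the invariant the paper uses implicitly.

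There is, however, a genuine gap in the scope of your conclusion. The proposition, as the paper proves it, says that for each level $n$ there is no automorphism of $T$ whose level-$n$ permutation conjugates $M_B^{(n)}$ into $M_L^{(n)}$. The paper's opening sentence supplies the reduction that makes the level-$2$ computation sufficient: an automorphism conjugating the operators at level $n$ also conjugates them at all previous levels, because the operators are compatible with the parent--child projection ($M^{(n+1)}|_{T_n}=M^{(n)}$, Section 5.2) and automorphisms respect that projection. Your final paragraph instead concludes only that no automorphism conjugates \emph{at every level simultaneously} (equivalently, at level $2$); as written, it does not exclude an automorphism whose level-$5$ restriction, say, conjugates $M_B^{(5)}$ to $M_L^{(5)}$ even though its level-$2$ restriction fails to conjugate the level-$2$ operators. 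Two short fixes are available: (a) add the downward reduction just described, which is the paper's route; or (b) run your invariant at every level directly --- an easy induction on the recursions gives $\mathrm{diag}\bigl(4M_B^{(n)}\bigr)=(2,0,\ldots,0,2)$ and $\mathrm{diag}\bigl(4M_L^{(n)}\bigr)=(0,\ldots,0,2,2)$, so for every $n\ge 2$ the BBS profile places one entry $2$ in each half while the lamplighter profile places both in one half, and the two unordered pairs of multisets never agree.
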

\begin{proof}
If there were an automorphism of the tree which would conjugate two 
operators on some level $n$ it would also conjugate these operators on the 
previous levels. Thus it is enough to prove the statement for the level $n=2$.
For this level the operator corresponding to the BBS system has $(2,0,0,2)$ on the diagonal
and the operator corresponding to the lamplighter has $(0,0,2,2)$ on the diagonal.
The last one under the tree automorphism can be transformed to itself or
$(2,2,0,0)$ only.
\end{proof}

\vspace{3mm}

\section{Conjugacy by permutation for BBS translation}
Let $M_B^{(n)} , M_L^{(n)}  \in \mbox{Mat}(2^n \times 2^n; \Z)$.
Let us denote the set of indices as $I_n=\{0,1,\ldots,2^n-1\}$.
We denote the concatenation of two vectors $u\in \C^{n}$ and
$v\in\C^{m}$ by $(u,v)\in \C^{n+m}$.
For $c \in I_n$, 
consider the binary expansion :
 $$c= \sum_{j=1}^{n} c_{j} \,2^{n-j} \in I_n$$
where $c_j \in  {\mathbb Z}_2$,
 which we denote as:
$$
c= (c_{1},c_{2}\cdots,c_{n})_2  .$$

In this section, we verify the following:
\begin{thm}\label{thm:BBS to LL}
 There exists a family of the transformation matrices $\sigma_n$ such that
\begin{align*}
 \sigma_n^* M_B^{(n)} \sigma_n =M_L^{(n)}
\end{align*}
hold, 
where $\sigma_n$ is determined by the permutation vector 
$\displaystyle \widehat\sigma_n=
\left(\mu_0^{(n)}, \mu_1^{(n)} , \cdots , \mu_{2^n-1}^{(n)}\right)$ by
\begin{align*}
 \sigma_n e_{j} = e_{\mu_j^{(n)}}
\end{align*}
for any $j \in I_n$, where $e_j=(0, \dots, 0,1, 0, \dots, 0)^*$. 

The permutation vectors $\widehat\sigma_n=(\mu_{0}^{(n)},\mu_{1}^{(n)},\ldots,\mu_{2^{n}-1}^{(n)}) $ are uniquely determined by
\begin{itemize} 
  \item $\widehat\sigma_1=(\mu_0^{(1)},\mu_1^{(1)})=(0,1)$,
  \item there exists a binary sequence
	$\nu^{(n)}=(\nu^{(n)}_0,\nu^{(n)}_1,\ldots,\nu^{(n)}_{2^{n-1}-1})
	\in \{0,1\}^{2^{n-1}}$ such that
\begin{align}
\widehat\sigma_n 
=
(\widehat\sigma_{n-1},\widehat\sigma_{n-1})+ 2^{n-1}(1- \nu_0^{(n)},\nu_0^{(n)}, 
\ldots, 1-\nu_{2^{n-1}-1}^{(n)},\nu_{2^{n-1}-1}^{(n)}),
\label{def:sigma}
\end{align}
\item the binary sequences $\nu^{(n)} \in \{0,1\}^{2^n-1}$ 
      are determined by use of the  binary pattern $g$ of the Sierpinski gasket
      as follows:
\begin{eqnarray}
 \nu^{(n)}=(T_{g^{(n)}_1}\circ T_{g^{(n)}_{2}}\circ \cdots \circ T_{g^{(n)}_{n-1}})(0) \label{def:nu}
\end{eqnarray}
for $n \ge 2$. The operator $T_{\alpha}$ is defined by
\begin{eqnarray*}
 T_{\alpha}(s_1,s_2,\ldots,s_m) = \left\{\begin{array}{ll}
		 (s_1,s_2,\ldots,s_m,s_1,s_2,\ldots,s_m) & \mbox{if $\alpha=0$}\\
		 (s_1,s_2,\ldots,s_m,1-s_1,1-s_2,\ldots,1-s_m) & \mbox{if $\alpha =1$}\\
			\end{array}\right..
\end{eqnarray*}
Here the binary pattern $g$ of  the Sierpinski gasket
 is given  by
 $g_{1}^{(n)}=g_{n}^{(n)}=1$ and 
 $$g^{(n)}_{m}=
      g^{(n-1)}_{m-1}+g^{(n-1)}_{m} \mod 2 \,\,$$ for
      $m=2,3,\ldots,n-1$ and $n=1,2,\ldots$ .
 \end{itemize}
\end{thm}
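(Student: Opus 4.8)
The plan is to argue by induction on $n$, reducing the level-$n$ conjugacy to the level-$(n-1)$ conjugacy together with a combinatorial constraint on the bit vector $\nu^{(n)}$ that, in the end, is solved exactly by the Sierpinski pattern. First I would record the common block form of the two operators. Writing $a_i = a_{i,X}^{(n-1)}$ for the level-$(n-1)$ generators of the system $X \in \{B,L\}$, the generator recursions give, for \emph{both} systems,
\[
4M_X^{(n)} = \begin{pmatrix} a_0 + a_0^* & a_1 + a_0^* \\ a_0 + a_1^* & a_1 + a_1^* \end{pmatrix}.
\]
The essential identities are $(a_0 + a_0^*) + (a_1 + a_1^*) = (a_1 + a_0^*) + (a_0 + a_1^*) = 4M_X^{(n-1)}$. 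Introducing $F_X = a_{0,X}^{(n-1)} - a_{1,X}^{(n-1)}$ with symmetric and antisymmetric parts $S_X = \tfrac12(F_X + F_X^*)$ and $K_X = \tfrac12(F_X - F_X^*)$, a short computation shows that the block of $4M_X^{(n)}$ indexed by $(b,b') \in \{0,1\}^2$ equals $2M_X^{(n-1)} + (-1)^{b'} S_X$ when $b = b'$ and $2M_X^{(n-1)} + (-1)^{b'} K_X$ when $b \neq b'$.

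Next I would unwind the permutation recurrence \eqref{def:sigma}. Setting $N = 2^{n-1}$ and reading off the top bit $A[j]\in\{0,1\}$ (so $A[2i] = 1-\nu_i^{(n)}$, $A[2i+1]=\nu_i^{(n)}$), the recurrence says that $\sigma_n$ sends the index $j$ to the index whose low $n-1$ bits are $\sigma_{n-1}(j \bmod N)$ and whose top bit is $A[j]$. Hence $\bigl(\sigma_n^* M_B^{(n)} \sigma_n\bigr)_{jk} = \bigl(M_B^{(n)}\bigr)_{\mu_j^{(n)},\mu_k^{(n)}}$ picks out the $(A[j],A[k])$ block of $M_B^{(n)}$ at the conjugated low indices, while $\bigl(M_L^{(n)}\bigr)_{jk}$ is the $(B(j),B(k))$ block of $M_L^{(n)}$ at the plain low indices, where $B(\cdot)$ is the top bit. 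By the inductive hypothesis $\sigma_{n-1}$ already matches the $2M^{(n-1)}$ parts of every block, so the identity $\sigma_n^* M_B^{(n)} \sigma_n = M_L^{(n)}$ collapses to a matching of the corrections, which I will call $(\star)$:
\[
(-1)^{A[k]} \bigl(\sigma_{n-1}^* C^B_{A[j]\oplus A[k]} \sigma_{n-1}\bigr)_{r(j),r(k)} = (-1)^{B(k)} \bigl(C^L_{B(j)\oplus B(k)}\bigr)_{r(j),r(k)},
\]
where $C_0 = S$, $C_1 = K$, $r(\cdot)$ denotes the low part and $\oplus$ is addition mod $2$.

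The main obstacle is precisely $(\star)$, because the correction $F = a_0 - a_1$ does not obey a recursion closing on $M$ alone: its level-$n$ form involves $a_0^{(n-1)}$ and $a_1^{(n-1)}$ individually, and these are not conjugate between the BBS and the lamplighter systems (already at level $1$ the generators are genuinely different, only the symmetrized sums agree). I would therefore strengthen the induction hypothesis to carry, alongside $\sigma_{n-1}^* M_B^{(n-1)} \sigma_{n-1} = M_L^{(n-1)}$, an explicit description of $\sigma_{n-1}^* F_B^{(n-1)} \sigma_{n-1}$ as a signed, permuted copy of $F_L^{(n-1)}$, the sign twist being recorded by a diagonal $\pm 1$ pattern. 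Propagating this auxiliary relation through the block form converts $(\star)$ into a recurrence relating $\nu^{(n)}$ to $\nu^{(n-1)}$ and to the accumulated sign twists.

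Finally I would verify that this recurrence is solved, uniquely, by $\nu^{(n)} = (T_{g_1^{(n)}} \circ \cdots \circ T_{g_{n-1}^{(n)}})(0)$. The duplication operator $T_0$ and the duplicate-and-complement operator $T_1$ are exactly the moves that implement, respectively, "keep the sign pattern'' and "flip it'' at each doubling of the index set, so the composition realizes the sign-twist bookkeeping demanded by $(\star)$; the defining relation $g_m^{(n)} \equiv g_{m-1}^{(n-1)} + g_m^{(n-1)} \pmod 2$ is Pascal's rule, whence the controlling array is the Sierpinski gasket. The base case $n=1$ is the identity permutation (where $M_B^{(1)} = M_L^{(1)}$), and $n=2$ is the transposition already witnessed by the diagonal computation $(2,0,0,2) \mapsto (0,0,2,2)$ from the proof of Proposition \ref{lem8.5}; together with the inductive step these close the argument and establish the uniqueness of the vectors $\widehat\sigma_n$.
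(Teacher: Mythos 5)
Your setup is correct as far as it goes, and it is genuinely different in spirit from the paper's proof: the common block form of $4M_X^{(n)}$, the identification of the four blocks as $2M_X^{(n-1)}\pm S_X$ and $2M_X^{(n-1)}\pm K_X$, the reading of the recurrence (\ref{def:sigma}) as ``low bits through $\sigma_{n-1}$, top bit from $A$'', and the resulting reduction of the theorem to your identity $(\star)$ are all valid, and you correctly locate the obstruction in the fact that $F_X=a_{0,X}^{(n-1)}-a_{1,X}^{(n-1)}$ is not data preserved by the conjugacy. The genuine gap is that everything after this diagnosis is a plan, not a proof. The strengthened hypothesis is never stated: what is actually true, and what your induction would have to carry, is the precise relation $\sigma_{n-1}^* F_B^{(n-1)}\sigma_{n-1}=F_L^{(n-1)}D_n$, where $D_n$ is the diagonal sign matrix whose entry at $(k_2,\ldots,k_n)_2$ is $(-1)^{[1+\sum_{j=2}^{n}k_jg_j^{(n)}]}$. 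Note that the twist attached to level $n-1$ is indexed by the \emph{level-$n$} Sierpinski row, so the induction must carry data one level ahead; nothing in your sketch acknowledges this. You then need two verifications you never perform: (a) that $(\star)$ holds with this $D_n$ and with $\nu^{(n)}$ given by (\ref{def:nu}) --- a real computation, because $A[k]$ is indexed by the digit split $(k_1,\ldots,k_{n-1};k_n)$ while the block structure is indexed by $(k_1;k_2,\ldots,k_n)$, so one must relate $\nu^{(n)}$ on shifted digit strings to $D_n$; and (b) that the relation reproduces itself at level $n$, i.e.\ $\sigma_{n}^* F_B^{(n)}\sigma_{n}=F_L^{(n)}D_{n+1}$. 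Both come down to mod-$2$ identities for the $g^{(n)}_m$: Pascal's rule plus the vanishing of convolution sums such as $\big[\sum_{l} g^{(\kappa-l+2)}_{j-l+1}g^{(\kappa+1)}_{l}\big]=0$. Your sentence that $T_0$ and $T_1$ ``are exactly the moves that implement keep/flip the sign pattern'' is an analogy standing in for precisely these computations, which are the mathematical core of the theorem.

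For comparison, the paper avoids inducting on the conjugacy altogether: it unwinds the recursions into closed-form entries $\alpha^{(\varepsilon)}_{j,k}=[(j_1+1+\varepsilon)(j_2+1+k_1)\cdots(j_n+1+k_{n-1})]$ and $\beta^{(\varepsilon)}_{j,k}=[(j_1+k_1+\varepsilon)\cdots(j_n+k_n+k_{n-1})]$, establishes the closed digit formula (\ref{sigma n k}) for $\widehat\sigma_n$ together with $\widehat\sigma_n^2=\mathrm{id}$ (Proposition \ref{prop sigma_n^2}), proves the key cancellation (\ref{useful_formula}), $[j_i'+k_i'+k_{i-1}']=[j_i+k_{i-1}+1]$, from Pascal's rule, and then checks the entrywise identity (\ref{rel:alpha beta}) in two cases; it in fact proves the stronger statement (\ref{a_B to a_L}) for the forward parts $a^{(0;n)}+a^{(1;n)}$ alone, the statement for $M^{(n)}$ following by adding adjoints. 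Any completion of your inductive scheme would need equivalents of these ingredients; as written, your proposal sets the stage correctly but does not prove the theorem.
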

\begin{rem}
{\rm (1)}
Let us see the orbit of $g$:
\begin{align*}
 &g^{(1)}=(g^{(1)}_1)=(1), \\
 &g^{(2)}=(g^{(2)}_1,g^{(2)}_2)=(1,1),\\
 &g^{(3)}=(g^{(3)}_1,g^{(3)}_2,g^{(3)}_3)=(1,0,1),\\
 &g^{(4)}=(g^{(4)}_1,g^{(4)}_2,g^{(4)}_3,g^{(4)}_4)=(1,1,1,1),\\
 &g^{(5)}=(g^{(5)}_1,g^{(5)}_2,g^{(5)}_3,g^{(5)}_4,g^{(5)}_5)=(1,0,0,0,1),\\
 &g^{(6)}=(g^{(6)}_1,g^{(6)}_2,g^{(6)}_3,g^{(6)}_4,g^{(6)}_5,g^{(6)}_6)=(1,1,0,0,1,1),
\\ 
&g^{(7)}=(g^{(7)}_1,g^{(7)}_2,g^{(7)}_3,g^{(7)}_4,g^{(7)}_5,g^{(7)}_6,g^{(7)}_7)=(1,0,1,0,1,0,1),
\ldots.
\end{align*}      
which gives the pattern of the Sierpinski gasket.

{\rm (2)} Another formula of $g$ is given by:
$$g_m^{(n)} = \frac{(n-1)!}{(m-1)!(n-m)!} \mod 2$$
\end{rem}

\begin{cor} The formulas hold for all $k \in I_{n-1}$:
$$\mu_{2k}^{(n)} + \mu_{2k+1}^{(n)} =2^n-1,   \quad \mu_{2k}^{(n)} \in 2 I_{n-1}.$$
\end{cor}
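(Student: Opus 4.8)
The plan is to prove both identities by a single induction on $n$, reading everything off the recurrence \eqref{def:sigma}. A pleasant simplification is that the detailed Sierpinski structure of the sequences $\nu^{(n)}$ plays no role whatsoever: the two formulas hold for \emph{any} choice of binary sequences $\nu^{(n)}\in\{0,1\}^{2^{n-1}}$. Hence I will use only the shape of \eqref{def:sigma} together with the base case $\widehat\sigma_1=(0,1)$, and the fact that $\widehat\sigma_n$ is a permutation vector of $\{0,\dots,2^n-1\}$ (Theorem \ref{thm:BBS to LL}).

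First I would introduce the decomposition $\mu_j^{(n)}=C_j+B_j$, where $C=(\widehat\sigma_{n-1},\widehat\sigma_{n-1})$ is the concatenation, so that $C_j=\mu_{j\bmod 2^{n-1}}^{(n-1)}$, and $B$ is the interleaved binary contribution, so that $B_{2k}=2^{n-1}(1-\nu_k^{(n)})$ and $B_{2k+1}=2^{n-1}\nu_k^{(n)}$ for $k\in I_{n-1}$. The base case $n=1$ is immediate: from $\widehat\sigma_1=(0,1)$ we get $\mu_0^{(1)}+\mu_1^{(1)}=1=2^1-1$ and $\mu_0^{(1)}=0\in 2I_0$.

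For the inductive step ($n\ge 2$), assuming both identities at level $n-1$, the sum formula follows from
\begin{align*}
\mu_{2k}^{(n)}+\mu_{2k+1}^{(n)}
=\bigl(C_{2k}+C_{2k+1}\bigr)+2^{n-1}(1-\nu_k^{(n)})+2^{n-1}\nu_k^{(n)}
=\bigl(C_{2k}+C_{2k+1}\bigr)+2^{n-1}.
\end{align*}
The key observation is that, because $2^{n-1}$ is even for $n\ge 2$, every consecutive index pair $(2k,2k+1)$ lies entirely inside one of the two copies of $\widehat\sigma_{n-1}$ and never straddles the midpoint $2^{n-1}$ (straddling would force $2k=2^{n-1}-1$, which is odd). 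Consequently $C_{2k}+C_{2k+1}=\mu_{2k'}^{(n-1)}+\mu_{2k'+1}^{(n-1)}$ with $k'=k\bmod 2^{n-2}$, which equals $2^{n-1}-1$ by the inductive hypothesis, giving $\mu_{2k}^{(n)}+\mu_{2k+1}^{(n)}=2^n-1$. For the parity formula, $\mu_{2k}^{(n)}=C_{2k}+2^{n-1}(1-\nu_k^{(n)})$; the second summand is even since $2^{n-1}$ is even, while $C_{2k}=\mu_{2k\bmod 2^{n-1}}^{(n-1)}$ is even by the inductive hypothesis, its index being again even. Thus $\mu_{2k}^{(n)}$ is even, and as it is an entry of a permutation of $\{0,\dots,2^n-1\}$ it lies in $\{0,2,\dots,2^n-2\}=2I_{n-1}$.

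I expect no genuine obstacle here: the entire argument is index bookkeeping, and the one point that requires attention is the same-half claim — that $(2k,2k+1)$ stays within a single copy of $\widehat\sigma_{n-1}$ — which is exactly where the evenness of $2^{n-1}$ is used and which simultaneously drives both the sum and the parity statements.
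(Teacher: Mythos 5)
Your proof is correct and takes essentially the same route as the paper's: induction on $n$ using the recurrence (\ref{def:sigma}), with the two $2^{n-1}$-contributions summing to $2^{n-1}$ for the first identity and the evenness of $2^{n-1}$ giving the second. The only difference is one of care, not of method: you make explicit the reduction of indices modulo $2^{n-1}$ (the observation that the pair $(2k,2k+1)$ never straddles the midpoint) and the parity bookkeeping, both of which the paper compresses into a one-line computation and the phrase ``follows immediately.''
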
 
\begin{proof}
We proceed by induction. Suppose the conclusion holds up to $n-1$.
It follows from (\ref{def:sigma}) that 
\begin{align*}
 \mu_{2k}^{(n)} + \mu_{2k+1}^{(n)}  
& = \mu_{2k}^{(n-1)} + \mu_{2k+1}^{(n-1)} +2^{n-1} \\
& = 2^n-1.
\end{align*}

The latter formula follows immediately.
\end{proof}

Let us denote $[n] \in \Z_{2}$ by the image of $n$ by $\Z \to \Z_2$.

 \begin{lem}
{\rm (i)}
 $\nu^{(n)}$ is given by
\begin{align*}
\label{nu to g}
 \nu^{(n)}_k=\nu^{(n)}_{(k_1,k_2,\cdots, k_{n-1})_2}=\bigg[\sum_{j=1}^{n-1}k_j g_j^{(n)}\bigg].
\end{align*}

{\rm (ii)}
\label{prop permutation matrix}
 $\widehat\sigma_n=(\mu_0^{(n)},\mu_1^{(n)},\ldots,\mu_{2^n-1}^{(n)})$ is a permutation vector of $I_n$, that is,  
 $$\mu_j^{(n)} \in I_n, \quad \mu_j^{(n)} \ne \mu_{j'}^{(n)} \text{ for all distinct pairs } j \ne j'$$
\end{lem}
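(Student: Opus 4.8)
The plan is to prove (i) first, since the explicit formula for $\nu^{(n)}$ is what drives the verification of (ii). The starting observation is that the doubling operators $T_\alpha$ act transparently on indices. If $s=(s_0,\ldots,s_{2^m-1})$ has length $2^m$ and we write an index of $T_\alpha(s)$ as $\ell' = b\,2^m + \ell$ with $b\in\{0,1\}$ the newly created top bit and $\ell\in\{0,\ldots,2^m-1\}$, then both branches of the definition collapse to the single identity
$$\bigl(T_\alpha(s)\bigr)_{\ell'} = [\,s_\ell + \alpha\, b\,].$$
Indeed, for $\alpha=0$ the value is copied and for $\alpha=1$ it is copied on the lower half and complemented on the upper half, which is exactly adding $\alpha b$ modulo $2$.

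Starting from the length-one sequence $(0)$ and applying $T_{g^{(n)}_{n-1}}, T_{g^{(n)}_{n-2}},\ldots,T_{g^{(n)}_{1}}$ in this order, each application prepends one new top bit and, by the identity above, adds the corresponding term to the accumulated value. A short induction on the number of operators applied then gives that the entry of $\nu^{(n)}$ at the index whose successive top bits are $b_1,\ldots,b_{n-1}$ equals $[\sum_{s=1}^{n-1} g^{(n)}_{n-s} b_s]$. Re-indexing by $k_j=b_{n-j}$, so that $(k_1,\ldots,k_{n-1})_2$ is the usual binary expansion with $k_1$ most significant, turns this into $\nu^{(n)}_k=[\sum_{j=1}^{n-1}k_j g_j^{(n)}]$, which proves (i).

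For (ii) I would argue by induction on $n$, the base case $\widehat\sigma_1=(0,1)$ being immediate. Reading off (\ref{def:sigma}) entrywise gives $\mu^{(n)}_j=\mu^{(n-1)}_{\,j\bmod 2^{n-1}}+2^{n-1}\epsilon_j$, where $\epsilon_{2i}=1-\nu^{(n)}_i$ and $\epsilon_{2i+1}=\nu^{(n)}_i$. Since $\mu^{(n-1)}_{\,\cdot}\in I_{n-1}$ and $\epsilon_j\in\{0,1\}$, every $\mu^{(n)}_j$ lies in $I_n$ and has $\epsilon_j$ as its top bit. By the inductive hypothesis $\widehat\sigma_{n-1}$ is a bijection of $I_{n-1}$, so for each prescribed value of the lower $n-1$ bits there are precisely two indices realizing it, namely $j=p$ and $j=p+2^{n-1}$ with $p\in I_{n-1}$. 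Hence $\widehat\sigma_n$ is a permutation of $I_n$ if and only if these paired indices always receive opposite top bits, i.e. $\epsilon_p\neq\epsilon_{p+2^{n-1}}$ for every $p\in I_{n-1}$.

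The main point, and the step I expect to be the crux, is this top-bit alternation. Because $2^{n-1}$ is even, $p$ and $p+2^{n-1}$ share the same parity and $\lfloor (p+2^{n-1})/2\rfloor=\lfloor p/2\rfloor+2^{n-2}$, so the formulas for $\epsilon$ reduce the question to
$$\epsilon_p+\epsilon_{p+2^{n-1}}=\bigl[\,\nu^{(n)}_{\lfloor p/2\rfloor}+\nu^{(n)}_{\lfloor p/2\rfloor+2^{n-2}}\,\bigr].$$
Thus it suffices to show $\nu^{(n)}_i\neq\nu^{(n)}_{i+2^{n-2}}$ for all $i\in\{0,\ldots,2^{n-2}-1\}$, and here part (i) does the work: for such $i$ the most significant bit $i_1$ of its binary expansion is $0$, and adding $2^{n-2}$ merely switches $i_1$ from $0$ to $1$, so the two values differ by the single term $g_1^{(n)}$. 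Since the Sierpinski pattern obeys the boundary condition $g_1^{(n)}=1$, this difference is $[1]\neq 0$, and the alternation follows. Everything therefore hinges on the contribution of the leading index bit being governed by $g_1^{(n)}$; the surrounding argument is the routine bookkeeping of the induction described above.
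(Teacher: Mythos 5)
Your proposal is correct and takes essentially the same approach as the paper's proof: part (i) by induction on the successive applications of $T_\alpha$ via the index identity $\bigl(T_\alpha(s)\bigr)_{b\,2^m+\ell}=[s_\ell+\alpha b]$, and part (ii) by induction on $n$, reducing injectivity to the fact that two indices of $\nu^{(n)}$ differing only in the leading bit receive different values because $g_1^{(n)}=1$. Your explicit top-bit bookkeeping with $\epsilon_j$ is just a more detailed rendering of the paper's statement $|\mu_j^{(n)}-\mu_{j+2^{n-1}}^{(n)}|=2^{n-1}$, so the two arguments coincide in substance.
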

\begin{proof}
(i) 
Let us rewrite $T_{\alpha}$ as:
$$
 T_{\alpha}(s_1,s_2,\ldots,s_m) = ([s_1],[s_2],\ldots,[s_m],[s_1+\alpha],[s_2+\alpha],\ldots,[s_m+\alpha]).$$

For example we see the case of $\nu^{(4)}$,
\begin{align*}
\nu^{(4)}=& (T_{g_1^{(4)}}\circ T_{g_2^{(4)}}\circ T_{g_3^{(4)}})(0)
\\
=&
(0,[g_3^{(4)}],[g_2^{(4)}],[g_2^{(4)}+g_3^{(4)}],[g_1^{(4)}],[g_1^{(4)}+g_3^{(4)}],[g_1^{(4)}+g_2^{(4)}],[g_1^{(4)}+g_2^{(4)}+g_3^{(4)}]),
\end{align*}
and 
\begin{align*}
& \nu^{(4)}_{(0,0,0)_2}=[0+0+0], \ 
\nu^{(4)}_{(0,0,1)_2}=[0+0+g_3^{(4)}], \
\nu^{(4)}_{(0,1,0)_2}=[0+g_2^{(4)}+0], \\
& \nu^{(4)}_{(0,1,1)_2}=[ 0+g_2^{(4)}+g_3^{(4)}], \\
& \nu^{(4)}_{(1,0,0)_2}=[g_1^{(4)}+0+0],
\nu^{(4)}_{(1,0,1)_2}=[g_1^{(4)}+0+g_3^{(4)}],\ldots.
\end{align*}

Let us consider the general case.
For any $h_j \in \{0,1\}$,
let us define
\begin{align*}
& v=(v_0, \cdots, v_{2^{n-1}-1}) =(T_{h_{n-1}} \circ T_{h_{n-2}} \circ \cdots \circ T_{h_1})(0) \\
& \tilde{v}=(\tilde{v}_0, \cdots, \tilde{v}_{2^n-1}) =(T_{h_n} \circ T_{h_{n-1}} \circ T_{h_{n-2}} \circ \cdots \circ T_{h_1})(0)  \\
&  \quad = (v,v+h_n) \mod 2.
\end{align*}
If $v_k = v_{(k_1, \cdots, k_{n-1})_2}= [\sum_{j=1}^{n-1} k_j h_{n-j}]$, 
then 
$$\tilde{v}_{\tilde{k}} =\tilde{v}_{(\tilde{k}_1,\ldots,\tilde{k}_n)_2} =[v_k +\tilde{k}_1h_n]= \bigg[\sum_{j=1}^n \tilde{k}_j h_{n-j+1}\bigg] $$
for $\tilde{k} \in \{0, \cdots, 2^n-1\}$,
since $k_1=\tilde{k}_2, \cdots, k_{n-1}= \tilde{k}_n$ hold.

If we insert $g_i^{(n)}$ into $h_{n-i}$ in $v$, then we obtain
 $\nu^{(n)} $, that is 
 $$\nu^{(n)}_k =  \bigg[\sum_{j=1}^{n-1} k_j g_j^{(n)}\bigg].$$

\vspace{3mm}

(ii)
We proceed by induction.
For $n=1$, $\widehat\sigma_1=(0,1)$ corresponds to the identity
over $I_1=\{0,1\}$.

 Suppose that  the conclusion holds up to $n-1$ so that 
  $\widehat\sigma_{n-1}$ be a permutation vector of $I_{n-1}$.
 It follows from the expression (i) 
 that 
 for any $k_2,\ldots,k_{n-1} \in \{0,1\}$, the equalities hold:
\begin{align*}
 [\nu^{(n)}_{(0,k_2,\cdots, k_{n-1})_2}
+ \nu^{(n)}_{(1,k_2,\cdots, k_{n-1})_2}]=\bigg[g_1^{(n)}+ 2\sum_{j=2}^{n-1}k_j g_j^{(n)}\bigg] = g_1^{(n)}=1.
\end{align*}
In particular 
$\nu^{(n)}_{(0,k_2,\cdots, k_{n-1})_2} \ne \nu^{(n)}_{(1,k_2,\cdots, k_{n-1})_2} $, and hence
\begin{align*}
| \mu_{j}^{(n)} - \mu_{j+2^{n-1}}^{(n)}| = g_1^{(n)} 2^{n-1} =2^{n-1}
\end{align*}
hold for any $j \in I_{n-1}$.

By the assumption, 
 $\widehat\sigma_{n-1}$ is a permutation vector on $I_{n-1}$ so that 
$\mu_{j}^{(n)} \ne \mu_{l}^{(n)}$ hold for any distinct pair $0 \leq j,l \leq 2^{n-1}-1$.
Since the value of $\mu_{j}^{(n)}$ does not exceed $2^n$,
it follows that $\mu_{j}^{(n)} \ne \mu_{l}^{(n)}$ hold for any distinct pair $0 \leq j,l \leq 2^n-1$, and hence
  $\widehat\sigma_n$ must be a permutation vector of $I_n$.
\end{proof}

\begin{prop}\label{prop sigma_n^2}
$\widehat\sigma_n^2=$ id hold  on $I_n=\{0,1, \dots, 2^n-1\}$. 
\end{prop}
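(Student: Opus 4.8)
The plan is to convert the recursion (\ref{def:sigma}) into an explicit closed formula for the binary digits of $\mu^{(n)}_j$ and then to verify the involution identity $\mu^{(n)}_{\mu^{(n)}_j}=j$ one digit at a time. Write $j=(j_1,\dots,j_n)_2$ and $\mu^{(n)}_j=(m_1,\dots,m_n)_2$. Reading off one level of (\ref{def:sigma}): the entry at position $j$ of the doubled vector $(\widehat\sigma_{n-1},\widehat\sigma_{n-1})$ is $\mu^{(n-1)}_{(j_2,\dots,j_n)_2}$, while the added multiple of $2^{n-1}$ is $1-\nu^{(n)}_{(j_1,\dots,j_{n-1})_2}$ or $\nu^{(n)}_{(j_1,\dots,j_{n-1})_2}$ according as $j_n$ is $0$ or $1$. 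Using the digit formula $\nu^{(n)}_{(p_1,\dots,p_{n-1})_2}=\big[\sum_{i=1}^{n-1}p_i\,g^{(n)}_i\big]$ proved above, the leading digit of $\mu^{(n)}_j$ is the correction $\big[\,1+j_n+\sum_{i=1}^{n-1}j_i\,g^{(n)}_i\,\big]$, and the remaining $n-1$ digits are those of $\mu^{(n-1)}_{(j_2,\dots,j_n)_2}$. Unfolding this one level at a time (an induction on $n$ with base case $\widehat\sigma_1=(0,1)$) yields
\[
 m_n=j_n,\qquad
 m_r=\Big[\,1+j_n+\sum_{s=1}^{n-r}j_{r+s-1}\,g^{(n-r+1)}_s\,\Big]\quad(1\le r\le n-1);
\]
in particular the parity is preserved, $m_n=j_n$.

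Applying the same formula to $m=\mu^{(n)}_j$ and using $m_n=j_n$, the $n$-th digit of $\mu^{(n)}_m$ is automatically $j_n$, and for $r<n$ its $r$-th digit is $\big[\,1+j_n+f_r(m)\,\big]$ with $f_r(m)=\big[\sum_{s=1}^{n-r}m_{r+s-1}g^{(n-r+1)}_s\big]$. Hence $\mu^{(n)}_{\mu^{(n)}_j}=j$ reduces to the single statement $f_r(m)=\big[\,1+j_n+j_r\,\big]$, since then the $r$-th digit becomes $[1+j_n+(1+j_n+j_r)]=[j_r]$. Substituting the closed form for each $m_{r+s-1}$ (all indices $r+s-1$ lie in $\{r,\dots,n-1\}$, so none is the exceptional last digit) and splitting off the constant part, the factor $1+j_n$ gets multiplied by $\sum_{s=1}^{n-r}g^{(n-r+1)}_s$. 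Here I would invoke the elementary fact that $\sum_{s=1}^{N}g^{(N)}_s$ counts the odd entries of the $(N-1)$-st Pascal row, hence is a power of two and is even for $N\ge 2$; combined with $g^{(N)}_N=1$ this gives $\sum_{s=1}^{N-1}g^{(N)}_s\equiv 1\pmod 2$, so the constant part contributes exactly $1+j_n$. It then remains to show that the doubly indexed remainder equals $j_r$.

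The heart of the argument is this last double sum. Expanding each $m_{r+s-1}$, interchanging the order of summation and collecting the coefficient $C_v$ of each $j_v$ ($r\le v\le n-1$), one is led, with $M=n-r+1$ and $w=v-r+1$, to
\[
 C_v=\sum_{a=1}^{w}g^{(M)}_a\,g^{(M+1-a)}_{w+1-a}
    \equiv\sum_{b=0}^{w-1}\binom{M-1}{b}\binom{M-1-b}{w-1-b}\pmod 2,
\]
using the Lucas/Kummer description $g^{(N)}_m=\binom{N-1}{m-1}\bmod 2$. The subset-of-a-subset identity $\binom{M-1}{b}\binom{M-1-b}{w-1-b}=\binom{M-1}{w-1}\binom{w-1}{b}$ factors this as $C_v\equiv\binom{M-1}{w-1}\,2^{\,w-1}\pmod 2$, which vanishes for $w\ge 2$ (that is, $v>r$) and equals $1$ for $w=1$ (that is, $v=r$). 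Thus the remainder collapses to $j_r$, giving $f_r(m)=[1+j_n+j_r]$ and hence $\mu^{(n)}_{\mu^{(n)}_j}=j$ for all $j\in I_n$, which is exactly $\widehat\sigma_n^2=\mathrm{id}$. The main obstacle is precisely this mod-$2$ convolution of Sierpinski rows; once it is recognized as a binomial convolution and dispatched by the subset-of-a-subset identity, the rest is bookkeeping on binary digits.
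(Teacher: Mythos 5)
Your proof is correct, and its skeleton is the same as the paper's: you derive the same closed-form digit formula $m_n=j_n$, $m_r=[1+j_n+\sum_{s=1}^{n-r}j_{r+s-1}g_s^{(n-r+1)}]$ (this is the paper's formula (\ref{sigma b to g}), obtained there via (\ref{sigma n k})), substitute it into itself, and reduce $\widehat\sigma_n^2=\mathrm{id}$ to two mod-$2$ facts — the row-sum identity $\sum_{s=1}^{M-1}g^{(M)}_s\equiv 1$ and the vanishing of the convolution $\sum_{a=1}^{w}g^{(M)}_a g^{(M+1-a)}_{w+1-a}$ for $w\ge 2$. The genuine difference lies in how that convolution is dispatched. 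The paper writes each product $g^{(\kappa-l+2)}_{j-l+1}g^{(\kappa+1)}_l$ as a trinomial coefficient $[\kappa!/((\kappa-j+1)!(j-l)!(l-1)!)]$, pairs the terms $l\leftrightarrow j-l+1$ by its symmetry, and must separately kill the middle term of odd-length sums via $[2\binom{2m-1}{m}\binom{\kappa}{2m}]=0$; you instead pass to binomials by $g^{(N)}_m=\binom{N-1}{m-1}\bmod 2$ (the paper's Remark (2)) and factor the entire sum as $\binom{M-1}{w-1}2^{w-1}$ using the committee--subcommittee identity, which annihilates all cross terms at once with no parity case analysis. The two arguments rest on the same underlying factorization — your identity $\binom{M-1}{b}\binom{M-1-b}{w-1-b}=\binom{M-1}{w-1}\binom{w-1}{b}$ is exactly the paper's trinomial identity exploited globally rather than pairwise — but yours makes the combinatorial core shorter and more uniform. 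One remark: your constant-term step invokes the standard (true) fact that the number of odd entries in a Pascal row is a power of two, hence even for rows of index at least $1$; the paper simply asserts the equivalent identity $[1+\sum_{j=1}^{\kappa}g^{(\kappa+1)}_j]=0$ without proof, so you are no less rigorous there, and your explicit observation that the indices $r+s-1$ never hit the exceptional last digit is precisely what legitimizes the blind substitution.
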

\begin{proof}
Let $k \in I_n$.
For $k=(k_1,k_2,\ldots,k_n)_2$, let us denote the corresponding  binary expansions:
\begin{align*}
 \widehat\sigma_n(k)&=\widehat\sigma_n((k_1,k_2,\ldots,k_n)_2)=(k_1',k_2',\ldots,k_n')_2,\\
 \widehat\sigma_n^2(k)&=\widehat\sigma_n((k_1',k_2',\ldots,k_n')_2)=(k_1'',k_2'',\ldots,k_n'')_2,
\end{align*}
respectively.
Firstly let us verify the formulas:
\begin{eqnarray}
\label{sigma n k}
\widehat\sigma_n(k)
 &= 
\left(\widehat\nu_{(k_1,k_2,\ldots,k_{n-1})_2}^{(n;k_n)}, \widehat\nu_{(k_2,\ldots,k_{n-1})_2}^{(n-1;k_n)}, \ldots,
\widehat\nu_{(k_{n-1})_2}^{(2;k_n)},k_n\right)_2
\end{eqnarray}
where $\widehat\nu^{(n-\kappa+1;k_n)}_{(k_{\kappa},\ldots,k_{n-1})_2} \in \{0,1\}$  is defined by
\begin{align*}
 \widehat\nu_{(k_{\kappa},\ldots,k_{n-1})_2}^{(n-{\kappa}+1;k_n)} 
\equiv  [1 + \nu_{(k_{\kappa},\ldots,k_{n-1})_2}^{(n-{\kappa}+1)}+k_n] =
\bigg[1+ \sum_{i=1}^{n-{\kappa}+1}k_{{\kappa}+i-1} g_i^{(n-{\kappa}+1)}\bigg].
\end{align*}
Since $\hat{\sigma}_1=(0,1)$, one can see that $\widehat\sigma_1(0)=0$ and $\widehat\sigma_1(1)=1$.

Notice  that  $\nu^{(2)} = T_{g_1^{(2)}}(0) = T_1(0) =(0,1)$ and hence
\begin{align*}
 \hat{\sigma}_2 & = 
 (\hat{\sigma}_1, \hat{\sigma}_1) + 2(1- \nu_0^{(2)}, \nu_0^{(2)} , 1- \nu_1^{(2)}, \nu_1^{(2)}) \\
 & =(0,1,0,1) +2(1-0, 0, 1-1,1) =(2,1,0,3).
 \end{align*}
 It  can be presented as
\begin{align*}
 & (\widehat{\nu}_{k_1}^{(2;k_2)} , k_2)_2 
 =(1+k_1g_1^{(2)}+k_2,k_2)_2 = 2[1+k_1+k_2]+k_2,
\end{align*}
for any $k \in I_2$. In fact the equalities follow from 
direct computations:
  \begin{align*}
&   \widehat\sigma_2(0)=2[1+0+0]+0=2, \quad
  \widehat\sigma_2(1)=2[1+0+1]+1=1, \\
  & \widehat\sigma_2(2)=2[1+1+0]+0=0, \quad
  \widehat\sigma_2(3)=2[1+1+1]+1=3.
  \end{align*}

  Suppose the formula holds up to $n-1$. Then 
  we have the equalities:
\begin{align}
\widehat\sigma_n(k)&=\widehat\sigma_n((k_1,k_2,\ldots,k_n)_2)
\\\nonumber &
=\widehat\sigma_{n-1}((k_2,k_3,\ldots,k_n)_2)
+ \left\{ \begin{array}{lr}
   2^{n-1}\nu_{(k_1,k_2,\ldots,k_{n-1})_2}^{(n)}&  \text{ if  $k_n=1$}\\
   2^{n-1}(1-\nu_{(k_1,k_2,\ldots,k_{n-1})_2}^{(n)})&  \text{ if  $k_n=0$}
	  \end{array} \right.
&
\\\nonumber
&=(0,\widehat\nu_{(k_2,k_3,\ldots,k_{n-1})_2}^{(n-1;k_n)},\cdots,\widehat\nu_{(k_{n-1})_2}^{(2;k_n)},k_n)_2
+  (1+\nu_{(k_1,k_2,\ldots,k_{n-1})_2}^{(n)}+k_n,0,\ldots,0)_2
&
\\\nonumber &=
\left(\widehat\nu_{(k_1,k_2,\ldots,k_{n-1})_2}^{(n;k_n)}, \widehat\nu_{(k_2,\ldots,k_{n-1})_2}^{(n-1;k_n)}, \ldots,\widehat\nu_{(k_{n-1})_2}^{(2;k_n)},k_n\right)_2.
\end{align}
So it holds for $n$.

Next by use of  (\ref{sigma n k}),
we obtain the equalities:
\begin{align}
\label{sigma b to g}
 k_{\kappa}' &= \bigg[1+ \sum_{i=1}^{n-{\kappa}+1}k_{{\kappa}+i-1} g_i^{(n-{\kappa}+1)}\bigg], \quad k_{n}'=k_n,
\\
 k_{\kappa}''&= \bigg[1+ \sum_{i=1}^{n-{\kappa}+1}k_{{\kappa}+i-1}' g_i^{(n-{\kappa}+1)} \bigg], \quad k_{n}''=k_n,
\end{align}
where $\kappa=1,2,\ldots,n-1$.
For $n-\kappa \in \{1,2,\ldots,n-1 \}$,
\begin{align*}
 k_{n-\kappa}''&=\bigg[1+ \sum_{j=1}^{\kappa+1} k_{n-\kappa+j-1}'g_j^{(\kappa+1)}\bigg]
\\&=\bigg[1+ \sum_{j=1}^{\kappa} (1+ \sum_{l=1}^{\kappa-j+2} k_{n-\kappa+j+l-2}g_l^{(\kappa-j+2)})g_j^{(\kappa+1)} + k_{n}' g_{\kappa+1}^{(\kappa+1)}\bigg] \\
& = \bigg[1+ \sum_{j=1}^{\kappa} (1+ \sum_{l=1}^{\kappa-j+2} k_{n-\kappa+j+l-2}g_l^{(\kappa-j+2)})g_j^{(\kappa+1)} + k_{n} g_1^{(1)} g_{\kappa+1}^{(\kappa+1)}] \\
& =\bigg[1+\sum_{j=1}^{\kappa}\left(g_j^{(\kappa+1)}\right)+
k_{n-\kappa}g_{1}^{(\kappa+1)}g_{1}^{(\kappa+1)}   \\
& \qquad 
+ (\sum_{j=1}^{\kappa}  \sum_{l=1}^{\kappa-j+2} k_{n-\kappa+j+l-2}g_l^{(\kappa-j+2)} g_j^{(\kappa+1)} )
 - k_{n-\kappa}g_{1}^{(\kappa+1)}g_{1}^{(\kappa+1)} + k_{n} g_1^{(1)} g_{\kappa+1}^{(\kappa+1)}\bigg]
 \\
& =\bigg[1+\sum_{j=1}^{\kappa}\left(g_j^{(\kappa+1)}\right)+k_{n-\kappa}g_{1}^{(\kappa+1)}g_{1}^{(\kappa+1)}  + \sum_{j=2}^{\kappa+1} k_{n-\kappa+j-1} \left(\sum_{l=1}^{j} g_{j-l+1}^{(\kappa-l+2)}g_{l}^{(\kappa+1)}\right)\bigg] 
\\& = k_{n-\kappa},
\end{align*}
where we have used the equalities
$g_1^{(1)}=g_1^{(\kappa+1)}=g_{\kappa+1}^{(\kappa+1)}=1,
 [1+\sum_{j=1}^{\kappa}\left(g_j^{(\kappa+1)}\right)]=0$,
\begin{align*}
 g_{j-l+1}^{(\kappa-l+2)}g_{l}^{(\kappa+1)} &
=\bigg[\dfrac{\kappa!}{(\kappa-j+1)!(j-l)!(l-1)!}\bigg]
=g_{j-(j-l+1)+1}^{(\kappa-(j-l+1)+2)}g_{j-l+1}^{(\kappa+1)},
\\
g_{m+1}^{(\kappa-m+1)}g_{m+1}^{(\kappa+1)}&=
g_{2m+1-(m+1)+1}^{(\kappa-(m+1)+2)}g_{m+1}^{(\kappa+1)}
=\bigg[\dfrac{\kappa!}{(\kappa-2m)!\,m!\,m!}\bigg]
\\&
=\bigg[\dfrac{(2m)!}{m!\,m!}\dfrac{\kappa!}{(\kappa-2m)!\,(2m)!}\bigg]=
\bigg[ 2 \binom{2m-1}{m} {\kappa \choose 2m} \bigg]
=0, 
\end{align*}
and 
\begin{align*}
\bigg[\sum_{l=1}^{j} g_{j-l+1}^{(\kappa-l+2)}g_{l}^{(\kappa+1)}\bigg]
& =
\begin{cases}
\bigg[2 \sum_{l=1}^{m} g_{j-l+1}^{(\kappa-l+2)}g_{l}^{(\kappa+1)}\bigg]& (j=2m)\\[2.5mm]
\bigg[2 \sum_{l=1}^{m} g_{j-l+1}^{(\kappa-l+2)}g_{l}^{(\kappa+1)} + g_{m+1}^{(\kappa-m+1)}g_{m+1}^{(\kappa+1)}\bigg]  & (j=2m+1)\\
       \end{cases}  \\
       &= 0.
\end{align*}
Hence  $\widehat\sigma_n^2(k)=(k_1,k_2,\ldots,k_n)_2=k$ holds. 
\end{proof}

\begin{proof}[Proof of Theorem \ref{thm:BBS to LL}]
Let $j, k\in I_n$,
and  denote the binary expansions:
\begin{align*}
  &j=(j_1,j_2,\ldots,j_n)_2, \quad k=(k_1,k_2,\ldots,k_n)_2, \\
  &\widehat\sigma_n(j)=(j_1',j_2',\ldots,j_n')_2, \quad \widehat\sigma_n(k)=(k_1',k_2',\ldots,k_n')_2.
\end{align*}

Let us denote the two generating elements $a_0, a_1$
of the dynamics for 
 the lamplighter operators  (\ref{lamplighter operators}) and the BBS  (\ref{bbs operators})
 by: 
 $$a_L^{(0)}, \ a_L^{(1)}, \ a_B^{(0)}, \ a_B^{(1)}$$ respectively.
These operators satisfy the following recursive relations for $\varepsilon = 0,1$:
$$ a_L^{(\varepsilon)} = ( [j+k+\varepsilon]a_L^{(k)})_{0 \le j,k \le 1}, \quad
 a_B^{(\varepsilon)} = ( [j+1+\varepsilon]a_B^{(k)})_{0 \le j,k \le 1}.
$$
By applying these formulas repeatedly, 
we obtain four matrices of the size 
$2^n$ by $2^n$  given by 
\begin{align*}
& a_B^{(\varepsilon;n)}=\bigg( \alpha^{(\varepsilon)}_{j,k} \bigg)_{0 \le j,k <2^n} =\bigg( [(j_1+1+\varepsilon)(j_2+1+k_1)\cdots (j_{n}+1+k_{n-1})] \bigg)_{0 \le j,k \le 2^n-1}, \\
& a_L^{(\varepsilon;n)}=\bigg( \beta^{(\varepsilon)}_{j,k} \bigg)_{0 \le j,k <2^n}=\bigg( [(j_1+k_1+\varepsilon)(j_2+k_2+k_1)\cdots (j_{n}+k_n+k_{n-1}) ]\bigg)_{0 \le j,k \le 2^n-1}.
\end{align*}

We shall verify the stronger formulas:
\begin{align}
\label{a_B to a_L}
 a_B^{(0;n)}+a_B^{(1;n)} 
=
\sigma_n\left(a_L^{(0;n)}+a_L^{(1;n)} \right)\sigma_n^{-1}.
\end{align}
This is enough to conclude   proposition:
\begin{align*}
\label{transition BBS to LL}
M_B^{(n)} & =  a_B^{(0;n)}+a_B^{(1;n)} +a_B^{(0;n)*}+a_B^{(1;n)*} \\
& =
\sigma_n\left(a_L^{(0;n)}+a_L^{(1;n)} +a_L^{(0;n)*}+a_L^{(1;n)*}\right)\sigma_n^{-1}
= \sigma_n M_L^{(n)}
\sigma_n^{-1}.
\end{align*}

Since 
$\widehat\sigma_n$ is a permutation vector ({\normalfont Proposition \ref{prop permutation matrix}}) and $\widehat\sigma_n^{-1}=\widehat\sigma_n$ ({\normalfont Proposition \ref{prop sigma_n^2}}),
equation (\ref{a_B to a_L}) is equivalent to the equalities:
\begin{eqnarray}
\label{rel:alpha beta}
 \alpha^{(0)}_{j,k} +\alpha^{(1)}_{j,k} =
 \beta^{(0)}_{\widehat\sigma_n(j),\widehat\sigma_n(k)} +\beta^{(1)}_{\widehat\sigma_n(j),\widehat\sigma_n(k)}
\end{eqnarray}
for all $j,k \in I_n$.

Let us compute both sides, and
divide into  two cases on $(j_2+1+k_1)(j_3+1+k_2)\cdots (j_{n}+1+k_{n-1})$, where:
\begin{itemize}
 \item[(i)]  all the factors $j_{i+1}+1+k_{i}\, (i=1,2,\ldots,n-1)$ take odd integer values,
 \item[(ii)] otherwise, that is, there exists an integer $\kappa$ such that $j_{\kappa+1}+1+k_{\kappa}$ is even.
\end{itemize}

To treat  both cases,
we claim the following formula:
suppose  for some $1 \leq \kappa \leq n$,
 $$j_{i+1} + k_i +1$$
is odd for  $\kappa +1 \leq i \leq n-1$,
 then
\begin{eqnarray}
\label{useful_formula}
 [j_i'+k_i'+k_{i-1}'] = [j_i+k_{i-1}+1]
\end{eqnarray}
 for $\kappa+1 \leq i \leq n$.

In fact we have the equalities:
\begin{align*}
 [j_{i}'+k_{i}'+k_{i-1}']  &=
\bigg[3+\sum_{l=1}^{n-i+1} (j_{i+l-1}+k_{i+l-1})g_l^{(n-i+1)}+\sum_{l=1}^{n-i+2} k_{i+l-2}g_l^{(n-i+2)}\bigg]
\\&=
\bigg[3+j_{i}+k_{i-1}+{\!\!\!}\sum_{l=2}^{n-i+1}{\!\!} (j_{i+l-1}+k_{i+l-2})g_l^{(n-i+1)}
+2{\!\!}\sum_{l=2}^{n-i+1} {\!\!}k_{i+l+2} g_{l-1}^{(n-i+1)}+2k_n\bigg]
\\&=
[j_{i}+k_{i-1}+1]
\end{align*}
where we used the defining relation 
$g_m^{(n)} = g_{m-1}^{(n-1)} +g_m^{(n-1)}$.
This verifies the claim.

Case (i): 
From the assumption, we obtain
$ 
[j_{i+1}+k_{i}+1]=1$
 for $i \in \{1,2,\ldots,n-1\}$.
Thus 
one can show the equalities:
$$\alpha_{j,k}^{(0)}+\alpha_{j,k}^{(1)}
 =[\alpha_{j,k}^{(0)}]+[\alpha_{j,k}^{(1)}]=[j_1+1]+[j_1+2] =1.
$$
By use of (\ref{useful_formula}), we obtain the equalities:
\begin{align*}
\beta_{j',k'}^{(0)}+\beta_{j',k'}^{(1)} &
=[\beta_{j',k'}^{(0)}]+[\beta_{j',k'}^{(1)}] \\
& = ([j_1'+k_1']+[j_1'+k_1'+1])  [j_2+k_1+1][j_3+k_2+1] \cdots [j_n+k_{n-1}+1] \\
&=[j_1'+k_1']+[j_1'+k_1'+1] =1.
\end{align*}
Thus (\ref{rel:alpha beta}) is proven in this case.

Case (ii): 
In this case there exists the largest $\kappa$  such that  $j_{\kappa+1}+1+k_{\kappa}$ is even, then 
$\alpha_{j,k}^{(0)}$ and $\alpha_{j,k}^{(1)}$ are equal to 0.
Hence
\begin{align*}
 &\alpha_{j,k}^{(0)}+\alpha_{j,k}^{(1)} =0.
\end{align*}
Thus 
we obtain the equalities:
\begin{align*}
 \beta_{j',k'}^{(0)}+\beta_{j',k'}^{(1)}
 & = [\beta_{j',k'}^{(0)}]+[\beta_{j',k'}^{(1)}] \\
&  =([j_1'+k_1']+[j_1'+k_1'+1]) 
[j_2'+k_2'+k_1'] \cdots [j_{\kappa}' +k_{\kappa}' +k_{\kappa-1}'] \\
& \times 
[j_{\kappa+1}+k_{\kappa}+1] \cdots [j_{n+1}+k_n+1] \\
& =0
\end{align*}
since $[j_{\kappa+1}+k_{\kappa}+1] =0$.
This completes the proof.
\end{proof}

\vspace{3mm}

This research was supported by the Aihara Project, the FIRST program
from JSPS, initiated by CSTP, and by JSPS KAKENHI Grant Numbers 25400110.

\end{document}